\documentclass[reqno]{amsart}
\usepackage{amsmath,amsthm,amscd,amssymb,amsfonts, amsbsy}
\usepackage{latexsym, color}
\usepackage{hyperref}
\usepackage{pxfonts}
\usepackage{mathrsfs}
\usepackage{bbm}
\usepackage{mathtools}
\usepackage{enumitem}

\theoremstyle{plain}
\newtheorem{main}{Theorem}
\newtheorem{theorem}[equation]{Theorem}
\newtheorem{lemma}[equation]{Lemma}

\newtheorem{proposition}[equation]{Proposition}

\theoremstyle{definition}
\newtheorem{definition}[equation]{Definition}
\newtheorem{condition}[equation]{Condition}

\theoremstyle{remark}
\newtheorem{remark}[equation]{Remark}

\newcommand{\capacity}{\operatorname{cap}}

\newcommand{\dist}{\operatorname{dist}}
\newcommand{\diam}{\operatorname{diam}}
\newcommand{\intr}{\operatorname{int}}

\numberwithin{equation}{section}

\newcommand{\bR}{\mathbb{R}}

\providecommand{\set}[1]{\{#1\}}
\providecommand{\Set}[1]{\left\{#1\right\}}

\providecommand{\abs}[1]{\lvert#1\rvert}
\providecommand{\Abs}[1]{\left\lvert#1\right\rvert}

\providecommand{\norm}[1]{\lVert#1\rVert}

\renewcommand{\vec}[1]{\boldsymbol{#1}}

\begin{document}
\title[Dirichlet problem for elliptic equations in non-divergence form]
{The Dirichlet problem for second-order elliptic equations in non-divergence form with continuous coefficients.}

\author[H. Dong]{Hongjie Dong}
\address[H. Dong]{Division of Applied Mathematics, Brown University,
182 George Street, Providence, RI 02912, United States of America}
\email{Hongjie\_Dong@brown.edu}
\thanks{H. Dong was partially supported by the NSF under agreement DMS-2055244.}

\author[D. Kim]{Dong-ha Kim}
\address[D. Kim]{Department of Mathematics, Yonsei University, 50 Yonsei-ro, Seodaemun-gu, Seoul 03722, Republic of Korea}
\email{skyblue898@yonsei.ac.kr}

\author[S. Kim]{Seick Kim}
\address[S. Kim]{Department of Mathematics, Yonsei University, 50 Yonsei-ro, Seodaemun-gu, Seoul 03722, Republic of Korea}
\email{kimseick@yonsei.ac.kr}
\thanks{S. Kim is supported by the National Research Foundation of Korea (NRF) under agreement NRF-2022R1A2C1003322.}

\subjclass[2010]{Primary 35J25, 35J67}

\keywords{Green's function; Wiener test; elliptic equations in non-divergence form}

\begin{abstract}
This paper investigates the Dirichlet problem for a non-divergence form elliptic operator $L$ in a bounded domain of $\mathbb{R}^d$.
Under certain conditions on the coefficients of $L$, we first establish the existence of a unique Green's function in a ball and derive two-sided pointwise estimates for it.
Utilizing these results, we demonstrate the equivalence of regular points for $L$ and those for the Laplace operator, characterized via the Wiener test.
This equivalence facilitates the unique solvability of the Dirichlet problem with continuous boundary data in regular domains.
Furthermore, we construct the Green's function for $L$ in regular domains and establish pointwise bounds for it.
This advancement is significant, as it extends the scope of existing estimates to domains beyond $C^{1,1}$, contributing to our understanding of elliptic operators in non-divergence form.
\end{abstract}
\maketitle

\section{Introduction}

We consider an elliptic operator $L$ defined as follows:
\begin{equation}			\label{master_eq}
Lu=  a^{ij} D_{ij}u+ b^i D_i u+cu
\end{equation}
in $\bR^d$, where $d \ge 3$.
Throughout this paper, we adhere to the standard summation convention over repeated indices.

We assume that the principal coefficients matrix $\mathbf{A}=(a^{ij})$ is symmetric, satisfies the uniform ellipticity condition, and is of Dini mean oscillation, i.e., the mean oscillation of $\mathbf A$ satisfies the Dini criterion.
Moreover, we require $c \le 0$, $\vec b=(b^1,\ldots, b^d) \in L^{p_0}_{\rm loc}(\bR^d)$,  $c \in L^{p_0/2}_{\rm loc}(\bR^d)$, for some $p_0>d$.
Refer to Conditions \ref{cond1} and \ref{cond2} in Section \ref{sec2} for more details.

This paper focuses on the Dirichlet problem for the equation $Lu=0$ in $\Omega \subset \mathbb{R}^d$ with continuous boundary data $u=f$ on $\partial \Omega$.
Concerning second-order elliptic equations in divergence form, a result by Littman, Stampacchia, and Weinberger \cite{LSW63} indicates that regular points coincide with those of Laplace's equation (see also \cite{GW82}).
Similarly, for second-order elliptic equations in non-divergence form, this holds true if the coefficients are H\"older or Dini continuous (see \cite{Herve, Krylov67}).
However, without such assumptions, counterexamples exist, even if the coefficients are uniformly continuous in $\mathbb{R}^d$ (see \cite{Bauman84a, Miller, KY17}).

The characterization of regular boundary points is a significant problem in the theory of elliptic equations, drawing the attention of numerous researchers.
Wiener solved this problem for Laplace's equation, providing the answer in the form of the so-called Wiener's test, which involves the sum of certain capacities.
In \cite{Bauman85}, Wiener's result has been extended to elliptic operators in non-divergence form with continuous coefficients, where a Wiener-type criterion is provided for the regularity of points.
Nonetheless, no new insights were provided regarding the alignment of regular points for the operator and the Laplacian.

The results in \cite{LSW63, GW82} are quite notable, as they highlight a significant difference in regularity requirements between uniformly elliptic equations in divergence form and those in non-divergence form. While no regularity condition beyond measurability is necessary for equations in divergence form, equations in non-divergence form typically require some restrictions on the modulus of continuity. This distinction arises primarily from the fact that the adjoint operator for divergence form equations remains in divergence form, whereas the adjoint operator for non-divergence form equations, often denoted as double divergence form operator, is fundamentally different.

It has long been recognized that weak solutions to the double divergence form equation $D_{ij}(a^{ij}u)=0$ are not necessarily continuous, even if the coefficients $a^{ij}$ are uniformly continuous and  elliptic (see \cite{Bauman84b}).
However, if $a^{ij}$ are Dini continuous, then a weak solution of $D_{ij}(a^{ij}u)=0$ is continuous.
More precisely, a weak solution has a continuous representative. In particular, if $a^{ij}$ are H\"older continuous, then a solution is H\"older continuous (see \cite{Sjogren73, Sjogren75}).

The main tool for establishing the equivalence of regular points for the second-order uniformly elliptic equations in divergence form and the Laplace's operator in \cite{LSW63, GW82} is the pointwise estimates for the Green's function $G(x,y)$, where it is shown that  $G(x,y) \le N \abs{x-y}^{2-d}$.
Unlike the Green's function for uniformly elliptic operators in divergence form, the Green's function for non-divergent elliptic operators does not necessarily satisfy this pointwise bound (see \cite{Bauman84a}). Notably, the function $G^*(x,y):=G(y,x)$ serves as the Green's function for the adjoint operator.
Viewing it from this angle, it is not unexpected that the existence of unbounded solutions to the double divergence form equation $D_{ij}(a^{ij}u)=0$ with continuous coefficients $a^{ij}$ as presented in \cite{Bauman84b} and the demonstration of a counter-example for the pointwise bound for the Green's function of the non-divergence form operator $a^{ij}D_{ij}$ with continuous coefficients provided in \cite{Bauman84a} are closely intertwined.

In recent papers \cite{DK17, DEK18}, it has been shown that weak solutions to the double divergence form equation, $L^*u=D_{ij}(a^{ij}u)-D_i(b^i u)+cu=0$, are continuous, accompanied by estimates on their modulus of continuity, provided that the coefficients satisfy the conditions outlined at the beginning of the paper.
This outcome has facilitated the development of the Green's function for the non-divergence form elliptic operator (with $b^i= c= 0$) in $C^{1,1}$ domains, enabling the establishment of pointwise bounds comparable to those of the Green's function for Laplace's equation (see \cite{HK20, DK21}).

In this paper, we first construct the Green's function for $L$, as defined by \eqref{master_eq} and subject to Conditions \ref{cond1} and \ref{cond2}, in a ball, and establish two-sided pointwise estimates for the Green's function, as outlined in Theorem~\ref{thm_green_function}.
Given the crucial role of pointwise estimates for the Green's function in establishing the equivalence of regular points between Laplace's equation and uniformly elliptic second-order equations in divergence form, it is reasonable to expect that regular points for the non-divergence form operator $L$ align with those for the Laplace operator.
Indeed, we demonstrate that under the assumption that the coefficients of $L$ satisfy Conditions \ref{cond1} and \ref{cond2}, the regular points for $L$ coincide with those for the Laplace operator, as stated in Theorem \ref{cor0800sat}.
This achievement allows us to establish the unique solvability of the Dirichlet problem with continuous boundary data in regular domains, as depicted in Theorem \ref{thm0802sat}. Furthermore, we present the construction of the Green's function for the operator $L$ in regular domains and establish pointwise bounds for this Green's function, as detailed in Theorem \ref{thm1127sat}.
This progress is significant, as such an estimate was previously only available for the Green's function for operators without lower-order terms in $C^{1,1}$ domains.

The organization of the paper is as follows:
In Section \ref{sec_main}, we summarize the main results.
Section \ref{sec2} introduces the conditions on the coefficients of the non-divergence form operator $L$ and presents preliminary results leading to the construction of Perron's solution in an arbitrary domain.
Section \ref{sec3} focuses on constructing the Green's function for $L$ in a ball and establishing two-sided pointwise bounds, which serve as key tools for further developments.
In Section \ref{sec4}, we introduce the concepts of potential and capacity adapted to the non-divergence form operator $L$ and present a theorem demonstrating the comparability of the capacity for $L$ with that of the Laplace operator.
Section \ref{sec5} presents one of the main results, namely, that a boundary point is regular if and only if the Wiener test holds at that point.
Section \ref{sec6} covers the construction of the Green's function in regular domains by utilizing the solvability of the Dirichlet problem in these domains, established in the previous section.
Section \ref{sec7} serves as an appendix, where we provide proofs for technical lemmas.

Finally, an additional remark is in order:
We do not address the two-dimensional problem here, as we plan to address it in a separate paper.
The Green's function for two-dimensional domains requires different techniques, as they exhibit logarithmic singularity at the pole (see \cite{DK21}).

\section{Main Results}			\label{sec_main}
The following theorems summarize our main results. While they are repeated from the main text, we present them here for improved readability and convenience.

\begin{main}[Theorem \ref{cor0800sat}]
Assume that Conditions \ref{cond1} and \ref{cond2} hold.
Let $\Omega \subset \bR^d$ be a bounded open set.
A point $x_0 \in \partial \Omega$ is a regular point for $L$ (See Definition \ref{def_regpt}) if and only if $x_0$ is a regular point for the Laplace operator.
\end{main}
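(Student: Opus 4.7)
The plan is to deduce this equivalence as a corollary of two items already in place: the Wiener-type criterion for $L$-regularity established in Section~\ref{sec5}, and the comparability of the $L$-capacity with the classical Newtonian capacity established in Section~\ref{sec4}. Under that strategy, the theorem becomes a one-line combination of three ingredients, so the bulk of the work has been shifted upstream.

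Concretely, I would proceed as follows. First, recall Wiener's classical theorem for the Laplacian: $x_0 \in \partial\Omega$ is regular for $\Delta$ if and only if the series
\[
\sum_{k=0}^\infty 2^{k(d-2)}\,\operatorname{cap}\bigl(\Omega^c \cap \overline{A_k(x_0)},\, B_{2^{1-k}}(x_0)\bigr)
\]
diverges, where $A_k(x_0)=\overline{B_{2^{-k}}(x_0)}\setminus B_{2^{-k-1}}(x_0)$ and $\operatorname{cap}$ is the Newtonian capacity. Next, invoke the analogous $L$-Wiener criterion proved in Section~\ref{sec5}, in which $\operatorname{cap}$ is replaced by the $L$-capacity $\operatorname{cap}_L$. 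Finally, apply the comparability
\[
\operatorname{cap}_L(E,B) \asymp \operatorname{cap}(E,B)
\]
from Section~\ref{sec4}, with constants depending only on $d$, the ellipticity, and the Dini modulus of $\mathbf A$. This comparability is in turn the direct payoff of the two-sided pointwise Green's function estimate in Theorem~\ref{thm_green_function}: since both capacities are equivalent to extremal problems for potentials built from their respective Green kernels, the sandwich $c_1|x-y|^{2-d}\le G_L(x,y)\le c_2|x-y|^{2-d}$ transfers to a sandwich between the two capacities. Term-by-term comparison of the summands then makes the $L$-Wiener series diverge if and only if the classical Wiener series does, and the theorem follows.

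The main obstacle is not in the assembly above but in the upstream work it relies on, and in particular in formulating the correct notion of $L$-capacity. Because $L$ fails to be self-adjoint and its adjoint $L^*$ is in double-divergence form, one cannot naively transplant the variational definition of capacity used for the Laplacian; care is needed in handling the drift $b^i$ and the potential $c$, and in choosing between $L$ and $L^*$ when defining $L$-potentials so that a Wiener-type criterion is actually equivalent to regularity in the sense of Definition~\ref{def_regpt}. Once those choices are made consistently in Sections~\ref{sec4} and~\ref{sec5}, the two-sided Green's function bound provides exactly the quantitative link needed to collapse $\operatorname{cap}_L$ onto $\operatorname{cap}$, after which the present theorem is a formal consequence.
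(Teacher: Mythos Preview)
Your plan is correct and matches the paper's proof almost verbatim: the paper simply combines Theorem~\ref{thm_wiener} (the $L$-Wiener test) with Theorem~\ref{thm_equivalence} (comparability of $\capacity$ and $\capacity^0$), both formulated with respect to a single fixed reference ball $\mathcal{B}$ rather than varying condensers. The one ingredient you do not make explicit is that Sections~\ref{sec4} and~\ref{sec5} are developed under the standing assumption $c\equiv 0$, and the paper handles the general case by first invoking the multiplicative reduction $u=\zeta v$ of Proposition~\ref{lem1012thu} (see \eqref{dongha1654}--\eqref{dongha1655}) to pass to an operator $\tilde L$ with no zero-order term.
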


\begin{main}[Theorem \ref{thm0802sat}]
Under the assumptions that Conditions \ref{cond1} and \ref{cond2} are satisfied, consider a bounded regular domain $\Omega \subset \bR^d$ (see Definition \ref{def_reg}).
For $f \in C(\partial\Omega)$, the Dirichlet problem,
\[
Lu=0 \;\text{ in }\;\Omega, \quad u=f \;\text{ on }\;\partial\Omega,
\]
possesses a unique solution in $W^{2,p_0/2}_{\rm loc}(\Omega)\cap C(\overline\Omega)$.
\end{main}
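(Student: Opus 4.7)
Let $u$ denote the Perron solution in $\Omega$ with boundary data $f$, whose existence in arbitrary bounded domains is indicated to be established in Section \ref{sec2}. The plan is to verify three things in turn: that $u$ is a strong solution in the interior, that $u$ is continuous up to $\partial\Omega$ with the correct boundary values, and that $u$ is unique.

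\emph{Interior strong solvability.} For any ball $B \Subset \Omega$, the Green's function supplied by Theorem \ref{thm_green_function} produces a function $v \in W^{2, p_0/2}(B) \cap C(\overline B)$ solving $Lv = 0$ in $B$ with $v = u$ on $\partial B$. The standard Poisson-modification argument of Perron theory then shows that replacing $u$ by $v$ inside $B$ yields functions that still lie in the appropriate Perron sub- and super-solution classes for $f$; by the extremal character of $u$, this forces $u \equiv v$ on $B$. Covering $\Omega$ by such balls gives $u \in W^{2, p_0/2}_{\loc}(\Omega)$ with $Lu = 0$ almost everywhere.

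\emph{Boundary continuity.} Since $\Omega$ is a regular domain (Definition \ref{def_reg}), every $x_0 \in \partial\Omega$ is a regular point for the Laplacian, hence by Theorem \ref{cor0800sat} also for $L$. Definition \ref{def_regpt} then gives $\lim_{x \to x_0} u(x) = f(x_0)$ at each boundary point, and combining this pointwise limit with continuity of $f$ and compactness of $\partial\Omega$ upgrades $u$ to an element of $C(\overline\Omega)$ with $u|_{\partial\Omega} = f$.

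\emph{Uniqueness and the expected main obstacle.} If $u_1, u_2$ both solve the problem, then $w := u_1 - u_2$ lies in $W^{2, p_0/2}_{\loc}(\Omega) \cap C(\overline\Omega)$, satisfies $Lw = 0$, and vanishes on $\partial\Omega$; since $p_0/2 > d/2$, $\vec b \in L^{p_0}_{\loc}$, and $c \le 0$, the Aleksandrov--Bakel'man--Pucci maximum principle forces $w \equiv 0$. The delicate step, which I expect to require the most care, is the Poisson modification in the interior: one must check that the strong solution $v$ coming from the Green's function really belongs to both Perron classes associated with $u|_{\partial B}$, so that extremality pins $u$ down to $v$. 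This in turn rests on a clean comparison principle between Perron sub-/super-solutions and strong solutions of $Lv = 0$ under Conditions \ref{cond1}--\ref{cond2}. Once this local replacement is justified, everything else is a routine consequence of the equivalence of regular points already in hand.
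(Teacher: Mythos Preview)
Your overall strategy --- Perron's method plus boundary regularity via Theorem~\ref{cor0800sat} --- matches the paper's, but you skip the one structural step the paper actually needs.  All of the Perron machinery in Section~\ref{sec2} (harmonic measure, super/subsolutions, Definition~\ref{def_perron}, Definition~\ref{def_regpt}, Lemmas~\ref{lem1210thu} and~\ref{lem_solution}) is developed only under the standing hypothesis $c\equiv 0$; see the sentence opening Section~3.3.  The paper therefore does \emph{not} run Perron for $L$ directly.  Instead it takes $\zeta$ from Proposition~\ref{lem1012thu}, passes to the operator $\tilde L = a^{ij}D_{ij} + (b^i + 2a^{ij}D_j\zeta/\zeta)D_i$ with no zeroth-order term, solves $\tilde L v = 0$, $v = f/\zeta$ on $\partial\Omega$ by Perron, and sets $u=\zeta v$.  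Without this reduction your appeal to ``the Perron solution \ldots\ established in Section~\ref{sec2}'' is not justified by the paper.

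Two further points.  First, the interior step you sketch (Poisson modification plus extremality) is precisely what Lemmas~\ref{lem1033thu}, \ref{lem1034thu}, \ref{lem1210thu}, and \ref{lem_solution} already package: for the $c=0$ operator $\tilde L$ they give directly that the Perron solution $v$ lies in $W^{2,p_0}_{\rm loc}(\Omega)$, with no need to invoke the Green's function.  (Theorem~\ref{thm_green_function} concerns the fixed ambient ball $\mathcal B$ and the inhomogeneous problem $-Lw=f$ with zero boundary data, not the Dirichlet problem in an arbitrary sub-ball; Lemma~\ref{lem0114mon} is the relevant ingredient.)  Second, the exponent $p_0/2$ in the statement is not incidental: $v\in W^{2,p_0}_{\rm loc}$, but after multiplying by $\zeta\in W^{2,p_0/2}(\mathcal B)$ the product $u=\zeta v$ lands only in $W^{2,p_0/2}_{\rm loc}$, which is why the paper's reduction also explains the regularity class.
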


\begin{main}[Theorem	\ref{thm1127sat}]
Assuming Conditions \ref{cond1} and \ref{cond2}, let $\Omega \subset \bR^d$ be a bounded regular domain.
Then there exists a Green's function $G_{\Omega}(x,y)$ in $\Omega$, and it has the following pointwise bound:
\[
0\le G_{\Omega}(x,y) \le N \abs{x-y}^{2-d},\quad \forall x\neq y \in \Omega,
\]
where $N$ is a constant depending only on $d$, $\lambda$, $\Lambda$, $\omega_{\mathbf A}$, $p_0$, and $\diam \Omega$.
\end{main}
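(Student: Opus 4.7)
The idea is to construct $G_\Omega$ from the Green's function on an enclosing ball by subtracting the $L$-solution that matches its boundary values on $\partial\Omega$, with solvability supplied by Theorem~\ref{thm0802sat} and the pointwise bound inherited from Theorem~\ref{thm_green_function}. Fix a ball $B=B_R$ containing $\Omega$ with $R$ comparable to $\diam\Omega$, and let $G_B$ be the Green's function on $B$ from Theorem~\ref{thm_green_function}, which already satisfies $0\le G_B(x,y)\le N|x-y|^{2-d}$. For each fixed $y\in\Omega$, the map $x\mapsto G_B(x,y)$ is continuous on $\overline\Omega\setminus\{y\}$, so in particular $G_B(\cdot,y)\big|_{\partial\Omega}\in C(\partial\Omega)$. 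Because $\Omega$ is a bounded regular domain, Theorem~\ref{thm0802sat} yields a unique $h_y\in W^{2,p_0/2}_{\rm loc}(\Omega)\cap C(\overline\Omega)$ with $Lh_y=0$ in $\Omega$ and $h_y=G_B(\cdot,y)$ on $\partial\Omega$. Define
\[
G_\Omega(x,y):=G_B(x,y)-h_y(x), \qquad x\in\overline\Omega\setminus\{y\}.
\]
By construction, $L G_\Omega(\cdot,y)=L G_B(\cdot,y)$ in $\Omega$ in the sense used to characterize the Green's function in Theorem~\ref{thm_green_function}, and $G_\Omega(\cdot,y)$ extends continuously by $0$ to $\partial\Omega$ since $h_y$ attains its Dirichlet data there; thus $G_\Omega$ qualifies as a Green's function on $\Omega$.

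For the pointwise estimate, since $c\le 0$ the operator $L$ satisfies the maximum principle, so $G_B(\cdot,y)\ge 0$ on $\partial\Omega$ forces $h_y\ge 0$ in $\Omega$. Therefore
\[
G_\Omega(x,y)\le G_B(x,y)\le N|x-y|^{2-d},
\]
with $N$ depending only on $d,\lambda,\Lambda,\omega_{\mathbf A},p_0$ and $R$, hence on $\diam\Omega$. Nonnegativity of $G_\Omega$ follows from the maximum principle applied on $\Omega\setminus\overline{B_r(y)}$: on the outer boundary $G_\Omega=0$, while on $\partial B_r(y)$ the lower half of the two-sided estimate of Theorem~\ref{thm_green_function} combined with the boundedness of $h_y$ on $\overline\Omega$ gives $G_\Omega>0$ for all sufficiently small $r$; letting $r\downarrow 0$ yields $G_\Omega\ge 0$ on $\Omega\setminus\{y\}$.

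The main obstacle is ensuring that $G_\Omega(\cdot,y)$ attains zero boundary values at \emph{every} point of $\partial\Omega$, not merely at smooth points. This is exactly where the hypothesis that $\Omega$ is a regular domain enters: without it, Perron's solution $h_y$ can fail to match the continuous data $G_B(\cdot,y)\big|_{\partial\Omega}$ at an irregular point, and the cancellation defining $G_\Omega$ would break down there. Theorem~\ref{thm0802sat}, resting on the Wiener-test equivalence of Theorem~\ref{cor0800sat}, is precisely what allows the $C^{1,1}$-based construction for operators without lower-order terms to be upgraded to arbitrary bounded regular domains and to operators with the admissible lower-order coefficients. Uniqueness of $G_\Omega$ and its independence from the specific enclosing ball $B$ are handled by the usual maximum-principle comparison of two candidate Green's functions.
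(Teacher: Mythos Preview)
Your proposal is correct and follows essentially the same approach as the paper: construct $G_\Omega$ by subtracting from $G_{\mathcal{B}}$ the $L$-harmonic function matching its boundary trace (solvability via Theorem~\ref{thm0802sat}), get the upper bound from $h_y\ge 0$ and Theorem~\ref{thm_green_function}, and get nonnegativity by the maximum principle on $\Omega\setminus\overline{B_r(y)}$ using the lower bound of the two-sided estimate near the pole. The paper fixes the specific three-ball configuration $\mathcal{B}''\Subset\mathcal{B}'\Subset\mathcal{B}$ with radii $R,2R,4R$ to track constants, and spells out the verification that $G_\Omega$ represents solutions of $Lv=-f$, $v=0$ on $\partial\Omega$ for $f\in C_c^\infty(\Omega)$, but the argument is the same.
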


\section{Preliminary}			\label{sec2}
We choose an open ball $\mathcal{B} \subset \mathbb{R}^d$ containing $\overline \Omega$, where $\Omega$ is a domain under consideration.
To aid our analysis, we introduce additional open balls denoted as $\mathcal{B}'$ and $\mathcal{B}''$, both concentric with $\mathcal{B}$ and satisfying $\Omega \Subset \mathcal{B}'' \Subset \mathcal{B}' \Subset \mathcal{B}$.
Throughout the paper, these open balls $\mathcal{B}$, $\mathcal{B}'$, and $\mathcal{B}''$ remain fixed. As an example, one could choose $\mathcal{B}=B_{4R}(0)$, $\mathcal{B}'=B_{2R}(0)$, and $\mathcal{B}''=B_R(0)$, where $R$ is selected to be sufficiently large.

\subsection{Assumptions on the coefficients}	\label{sec2.1}

\begin{condition}			\label{cond1}
The coefficients of $L$ are measurable and defined in the whole space $\bR^d$.
The principal coefficients matrix $\mathbf{A}=(a^{ij})$ is symmetric and satisfies the ellipticity condition:
\[
\lambda \abs{\xi}^2 \le \mathbf{A}(x) \xi  \cdot \xi \le \lambda^{-1} \abs{\xi}^2,\quad \forall x \in \bR^d,\;\;\forall \xi \in \bR^d,
\]
where $\lambda \in (0,1]$ is a constant.
The lower-order coefficients $\vec b=(b^1,\ldots, b^d)$ and $c$ belong to $L^{p_0/2}_{\rm loc}(\bR^d)$ and $L^{p_0}_{\rm loc}(\bR^d)$ for some $p_0>d$, and
\[
\norm{\vec b}_{L^{p_0}(\mathcal{B})}+ \norm{c}_{L^{p_0/2}(\mathcal{B})} \le \Lambda,
\]
where $\Lambda=\Lambda(\mathcal{B})<\infty$.
Additionally, we require that $c\le 0$.
\end{condition}

\begin{condition}			\label{cond2}
The mean oscillation function $\omega_{\mathbf A}: \bR_+ \to \bR$ defined by
\[
\omega_{\mathbf A}(r):=\sup_{x\in \mathcal{B}} \fint_{\mathcal{B} \cap B_r(x)} \,\abs{\mathbf A(y)-\bar {\mathbf A}_{x,r}}\,dy, \;\; \text{where }\bar{\mathbf A}_{x,r} :=\fint_{\mathcal{B} \cap B_r(x)} \mathbf A,
\]
satisfies the Dini condition, i.e.,
\[
\int_0^\epsilon \frac{\omega_{\mathbf A}(t)}t \,dt <+\infty
\]
for some $\epsilon>0$.
\end{condition}

We denote by $C$ generic constants that depend solely on $d$, $\lambda$, $\Lambda$, $p_0$, $\omega_{\mathbf A}$, and the diameter of $\mathcal{B}$, unless stated otherwise. The notation $A \lesssim B$ is used to indicate the existence of a generic constant $C > 0$ such that $A \leq CB$. Similarly, $A \simeq B$ is used when both $A \lesssim B$ and $B \lesssim A$ hold.

At times, we will make the additional assumption that $c\equiv 0$.
However, we now address the dispensability of this seemingly strong condition within the framework of the Dirichlet problem.
The following proposition demonstrates the existence of a strictly positive bounded solution in $W^{2,p_0/2}(\mathcal{B})$.

\begin{proposition}			\label{lem1012thu}
Under Conditions \ref{cond1} and \ref{cond2}, there exists a function $\zeta \in W^{2,p_0/2}(\mathcal{B})$ satisfying
\begin{equation}			\label{eq1532tue}
L\zeta=0 \;\text{ in }\; \mathcal{B},\quad \zeta =1\; \text{ on }\;\partial \mathcal{B}.
\end{equation}
Furthermore, $\zeta$ is continuous and satisfies the estimates $\delta \le \zeta \le 1$ in $\mathcal{B}$, along with  $\norm{D\zeta}_{L^{p_0}(\mathcal{B})} \le  C$, where $\delta$ and $C$ are positive constants dependent only on $d$, $\lambda$, $\Lambda$, $p_0$, $\omega_{\mathbf A}$, and the diameter of $\mathcal{B}$.
\end{proposition}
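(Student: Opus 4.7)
The plan is to reduce to homogeneous boundary data by setting $\zeta = 1 + v$, so that $v$ solves
\[
Lv = -c \;\text{ in }\; \mathcal{B}, \quad v = 0 \;\text{ on }\; \partial\mathcal{B}.
\]
Since $\mathcal{B}$ is a ball and the coefficients satisfy Conditions \ref{cond1} and \ref{cond2}, I would invoke the $W^{2,p_0/2}$-solvability for non-divergence form elliptic equations with Dini mean oscillation principal coefficients and $L^{p_0}, L^{p_0/2}$ lower-order terms in smooth domains. This produces a unique $v \in W^{2,p_0/2}(\mathcal{B}) \cap W^{1,p_0/2}_0(\mathcal{B})$ with $\norm{v}_{W^{2,p_0/2}(\mathcal{B})} \lesssim \norm{c}_{L^{p_0/2}(\mathcal{B})} \le \Lambda$, so that $\zeta \in W^{2,p_0/2}(\mathcal{B})$ satisfies \eqref{eq1532tue}.

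For continuity and the pointwise bounds, since $p_0 > d$ forces $p_0/2 > d/2$, Sobolev embedding gives $W^{2,p_0/2}(\mathcal{B}) \hookrightarrow C^{0,\alpha}(\overline{\mathcal{B}})$ with $\alpha = \min(2 - 2d/p_0, 1)$, and thus $\zeta \in C(\overline{\mathcal{B}})$ with a quantitative modulus. The bounds $0 \le \zeta \le 1$ follow by applying the Alexandrov--Bakelman--Pucci maximum principle to $\zeta - 1$ (a subsolution, since $L(\zeta - 1) = -c \ge 0$, vanishing on $\partial \mathcal{B}$) and to $-\zeta$ (satisfying $L(-\zeta) = 0$ with boundary value $-1$), exploiting $c \le 0$ in each case.

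The heart of the argument is the quantitative lower bound $\zeta \ge \delta > 0$. First, combining the $C^{0,\alpha}$ control of $v$ with $v|_{\partial\mathcal{B}} = 0$ produces, in a quantitative way, a boundary layer $\{x \in \mathcal{B} : \dist(x,\partial\mathcal{B}) < r_0\}$ on which $\zeta \ge 1/2$, where $r_0$ depends only on the stated data. Second, because $\zeta \ge 0$ is a strong solution of $L\zeta = 0$, Harnack's inequality (via Krylov--Safonov theory adapted to the $L^{p_0}$ drift and $L^{p_0/2}$ potential) applied along a finite chain of overlapping interior balls propagates this lower bound from the boundary layer to all of $\mathcal{B}$. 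The chain length and the Harnack constant depend only on $d, \lambda, \Lambda, p_0, \omega_{\mathbf A}$, and $\diam \mathcal{B}$, which yields the desired $\delta$.

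Finally, the gradient estimate follows purely from Sobolev embedding: since $\norm{v}_{W^{2,p_0/2}(\mathcal{B})} \le C$ and $p_0 \ge d$, the embedding $W^{1,p_0/2}(\mathcal{B}) \hookrightarrow L^{p_0}(\mathcal{B})$ (or $\hookrightarrow L^\infty$ when $p_0/2 > d$) gives $\norm{D\zeta}_{L^{p_0}(\mathcal{B})} = \norm{Dv}_{L^{p_0}(\mathcal{B})} \le C$. The step I expect to require the most care is the quantitative positive lower bound: one needs both the $C^{0,\alpha}$ boundary modulus to be controlled purely by the allowed data and Harnack's inequality to hold for strong $W^{2,p_0/2}$-solutions with the stated integrability of $\vec b$ and $c$, particularly when $p_0/2 < d$ so that direct ABP-based arguments are not immediate.
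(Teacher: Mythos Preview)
Your outline matches the paper's proof closely: set $\zeta = 1 + v$ with $v \in W^{2,p_0/2}(\mathcal{B})\cap W^{1,p_0/2}_0(\mathcal{B})$ solving $Lv = -c$, get continuity from Sobolev embedding, obtain a boundary layer where $\zeta \ge 1/2$ from the H\"older modulus, and propagate positivity inward by a Harnack chain. The two places where the paper proceeds differently are exactly the two you flagged as delicate. First, for $0 \le \zeta \le 1$ the paper does not apply ABP directly (which would need a maximum principle for $W^{2,p_0/2}$ solutions with $p_0/2$ possibly below $d$); instead it approximates $\vec b, c$ by smooth $\vec b_n, c_n$ with $c_n \le 0$, solves the approximate problems to get classical solutions $\zeta_n$, applies the classical maximum principle to get $0 \le \zeta_n \le 1$, and passes to the limit in $W^{2,p_0/2}$. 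Second, the Harnack inequality with a potential $c \in L^{p_0/2}$ is not taken off the shelf: the paper proves a dedicated lemma (Lemma~\ref{lem2151thu}) valid in sufficiently small balls by solving an auxiliary problem $Lv = 0$, $v = 1$ on $\partial B_r$, showing $v \ge 1/2$ for small $r$, and then writing $u = vw$ so that $w$ satisfies an equation with no zeroth-order term, to which Safonov's Harnack inequality applies. Your plan is correct, but these are the concrete devices that fill the gaps you anticipated.
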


\begin{proof}
By \cite[Theorem 4.2]{Krylov21}, there exists a unique function $v \in W^{2,p_0/2}(\mathcal{B})\cap W^{1,p_0/2}_0(\mathcal{B})$ that satisfies $Lv=-c$ in $\mathcal{B}$.
Then, $\zeta=1+ v \in W^{2,p_0/2}(\mathcal{B})$, and it satisfies \eqref{eq1532tue}.
It is evident from the Sobolev embedding that $\zeta \in C(\overline{\mathcal{B}})$.
We will demonstrate that
\begin{equation}			\label{eq2100thu}
0 \le \zeta \le 1 \quad\text{in}\quad \mathcal{B}.
\end{equation}
Let $\set{\vec b_n}$ and $\set{c_n}$ be sequences smooth functions converging to $\vec b$ in $L^{p_0}(\mathcal{B})$ and to $c$ in $L^{p_0/2}(\mathcal{B})$, respectively. Additionally, we assume $c_n \le 0$ in $\mathcal{B}$.

Consider the operators $L^{(n)}=a^{ij}D_{ij}+b_n^i D_i + c_n$.
By \cite[Theorem 4.2]{Krylov21}, there exists a unique $v_n \in W^{2,p}(\mathcal{B})\cap W^{1,p}_0(\mathcal{B})$ that satisfies $L^{(n)}v_n=-c_n$ in $\mathcal{B}$.
Then, it follows from \cite[Theorem 1.5]{DEK18} that $v_n \in C^2(\overline{\mathcal{B}})$.
Consequently, $\zeta_n=1+v_n$ becomes the classical solution to $L^{(n)}\zeta_n=0$ in $\mathcal{B}$, with $\zeta_n =1$ on $\partial \mathcal{B}$.
Applying the classical maximum principle, we establish $0\le \zeta_n \le 1$.
As
\[
a^{ij}D_{ij}(\zeta-\zeta_n)+b^i_n D_i(\zeta-\zeta_n)+c_n(\zeta-\zeta_n)=(b^i_n-b^i)D_i\zeta + (c_n-c)\zeta,
\]
and the right-hand side converges in $L^{p_0/2}(\mathcal{B})$ as $n\to \infty$, we deduce that $\zeta_n \to \zeta$ in $W^{2,p_0/2}(\mathcal{B})$.
Therefore, by the Sobolev embedding theorem, we conclude that $\zeta_n$ converges to  $\zeta$ uniformly in $\mathcal{B}$.
This confirms the claim \eqref{eq2100thu}.

Moving forward, given that $\zeta=1$ on $\partial \mathcal{B}$ and $\zeta \in C^{1-d/p_0}(\mathcal{B})$ by the Sobolev embedding, there is $\delta_0>0$ such that $\zeta(x) \ge \frac12$ whenever $\dist(x, \partial \mathcal{B})<\delta_0$.
The subsequent lemma establishes Harnack inequality in a small ball.

\begin{lemma}			\label{lem2151thu}
Assuming Conditions \ref{cond1} and \ref{cond2} hold, there exist positive constants $r_0$ and $N$, depending only on $d$, $\lambda$, $\Lambda$, $p_0$, and $\omega_{\mathbf A}$, such that if $u \in W^{2,p_0/2}(B_r)$ is a nonnegative solution of $Lu=0$ in $B_r \subset \mathcal{D}$ with $r<r_0$, then we have
\[
\sup_{B_{r/2}} u \le N \inf_{B_{r/2}}u.
\]
\end{lemma}
\begin{proof}
First we show that there exists $r_0>0$ such that if $v \in W^{2,p_0/2}(B_r)$ is a solution to the problem
\begin{equation}			\label{eq1847thu}
\left\{
\begin{array}{ccc}
Lv=0 &\text{ in } & B_r,\\
v =1 & \text{ on }&\partial B_r,
\end{array}
\right.
\end{equation}
where $B_r \subset \mathcal{B}$ and $0<r<r_0$, then $v \ge 1/2$ in $B_r$.

Choose $q$ such that $d/2<q<p_0/2$.
By \cite[Theorem 4.2]{Krylov21}, we obtain $\psi \in W^{2,q}(B_1)\cap W^{1,q}_0(B_1)$ as a solution to
\[
a^{ij}(r\,\cdot\,)D_{ij}\psi+r b^i(r\,\cdot\,)D_i \psi+r^{2}c(r\,\cdot\,)\psi=-r^{2}c(r\,\cdot\,)\; \text{ in }\; B_1,
\]
where
\[
\norm{rb^i(r\,\cdot\,)}_{L^p(B_1)}\le r^{1-d/p}\norm{b^i}_{L^p(\mathcal{B})}\quad\text{and}\quad
\norm{r^{2}c(r\,\cdot\,)}_{L^{p/2}(B_1)}\le r^{2(1-d/p)} \norm{c}_{L^{p/2}(\mathcal{B})}.
\]
Thus we have the estimate
\begin{equation}		\label{eq1725thu}
\norm{\psi}_{W^{2,q}(B_1)} \le C \norm{r^2c(r\,\cdot\,)}_{L^{q}(B_1)}.
\end{equation}
By the Sobolev embedding and H\"older's inequality, we derive from \eqref{eq1725thu} that
\begin{align}			\nonumber
\sup_{B_1} \,\abs{\psi} \le C \norm{\psi}_{W^{2,q}(B_1)} & \le  C r^2 \norm{c(r\,\cdot\,)}_{L^{q}(B_1)} \le C r^2  \norm{c(r\,\cdot\,)}_{L^{p_0/2}(B_1)}\\
					\label{eq2104sun}
& \le C r^{2-2d/p_0}\norm{c}_{L^{p_0/2}(\mathcal{B})} \le Cr^{2-2d/p_0}.
\end{align}
Since $p_0>d$, we can find $r_0>0$ such that the term $Cr^{2-2d/p_0}$ in \eqref{eq2104sun} is less than $1/2$ for $r<r_0$.
As $1+\psi(r^{-1}\,\cdot\,) \in W^{2,q}(B_r)$ is a solution to the problem \eqref{eq1847thu}, we have $v=1+\psi(r^{-1}\,\cdot\,)$ by the uniqueness.
Consequently, $v \ge 1/2$ in $B_r$ as claimed.

Next, we set $w=u/v$ and note that
\[
L u=v (a^{ij}D_{ij}w +  \tilde b^i D_i w)+wLv,\quad\text{where }\;\tilde b^i:=b^i+2a^{ij}D_j v/v.
\]
Given that $Lu=0=Lv$ in $B_r$, if we set $\tilde L=a^{ij}D_{ij}+\tilde b^i D_i$, then $\tilde L w=0$ in $B_r$.
Note that
\[
\norm{Dv}_{L^d(B_r)} = r^{-1}\norm{D\psi(r^{-1}\,\cdot\,)}_{L^d(B_r)} = \norm{D\psi}_{L^d(B_1)} \le C \norm{\psi}_{W^{2,q}(B_1)}  \le  C r^{2-2d/p_0},
\]
where we used the Sobolev inequality and  \eqref{eq2104sun}.
Therefore, we have
\[
\norm{\tilde{\vec b}}_{L^d(B_r)} \le \norm{\vec b}_{L^d(B_r)}+4 \lambda^{-1} \norm{Dv}_{L^d(B_r)} \le C.
\]
Consequently, by \cite[Theorem 3.1]{Safonov}, the Harnack inequality holds for $w$, i.e.,
\[
\sup_{B_{r/2}} w \le N \inf_{B_{r/2}} w.
\]
Since $u=vw$ and $1/2 \le v \le 1$ in $B_r$, the preceding inequality also holds for $u$.
\end{proof}

Applying Lemma~\ref{lem2151thu} to $\zeta$ in a chain of balls, we conclude that $\zeta \ge \delta$ in $\mathcal{B}$ for some positive constant $\delta>0$.
This completes the proof of the proposition.
\end{proof}

Notice that $u=\zeta v$, where $\zeta$ is as in Proposition \ref{lem1012thu}, satisfies
\begin{equation}
\label{dongha1654}
Lu=\zeta \left(a^{ij}D_{ij}v +  (b^i+2a^{ij}D_j \zeta/\zeta) D_i v\right)=:\zeta (a^{ij}D_{ij}v + \tilde b^i D_i v)=:\zeta \tilde{L}v.
\end{equation}
Hence, solving the Dirichlet problem in a domain $\Omega \Subset \mathcal{B}$ for $Lu = 0$ with boundary condition $u = f$ is equivalent to solving the Dirichlet problem
\begin{equation}\label{dongha1655}
    \left\{
\begin{array}{ccc}
\tilde{L}v=0 &\text{ in } & \Omega,\\
v=f/\zeta& \text{ on }&\partial \Omega.
\end{array}
\right.
\end{equation}
Moreover, Proposition \ref{lem1012thu} implies that $\tilde{\vec{b}} = (\tilde{b}^1,\ldots, \tilde{b}^d)$ satisfies $\norm{\tilde{\vec b}}_{L^{p_0}(\mathcal{B})} \le C$.
Consequently, in the sequel we may assume that $c\equiv 0$ without compromising any essential aspects.
Further details are provided in Theorem \ref{thm0802sat}.

\subsection{Dirichlet problem in a ball}
We begin by establishing the solvability of the Dirichlet problem with continuous boundary data in sufficiently smooth domains, such as balls.
\begin{lemma}			\label{lem0114mon}
Assume that Conditions \ref{cond1} and \ref{cond2} hold, and further suppose that $c = 0$.
Let $B \Subset \mathcal{B}$.
Then, for any $f \in C(\partial B)$, there exists a unique solution $u \in W^{2,p_0}_{\rm loc}(B) \cap C(\overline B)$ of the Dirichlet problem
\begin{equation}			\label{eq0853thu}
\left\{
\begin{array}{ccc}
Lu=0 &\text{ in } & B,\\
u=f& \text{ on }&\partial B.
\end{array}
\right.
\end{equation}
Additionally, we have
\begin{equation}			\label{eq1025thu}
\max_{\overline B}\, \abs{u} \le \max_{\partial B} \,\abs{f}.
\end{equation}
\end{lemma}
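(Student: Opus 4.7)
The plan is to construct $u$ by solving the Dirichlet problem with smoothed boundary data and passing to the limit via the Alexandrov--Bakelman--Pucci (ABP) maximum principle. Given $f \in C(\partial B)$, I would first extend $f$ continuously to $\overline B$ by Tietze's theorem and mollify to obtain a sequence $F_n \in C^\infty(\overline B)$ with $f_n := F_n|_{\partial B} \to f$ uniformly on $\partial B$. Since $c = 0$ and $\vec b \in L^{p_0}(B)$, the function $-LF_n = -a^{ij}D_{ij}F_n - b^i D_i F_n$ lies in $L^{p_0}(B)$, so \cite[Theorem 4.2]{Krylov21} (already invoked in Proposition \ref{lem1012thu}) produces a unique $w_n \in W^{2,p_0}(B) \cap W^{1,p_0}_0(B)$ satisfying $Lw_n = -LF_n$ in $B$. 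Then $u_n := w_n + F_n \in W^{2,p_0}(B) \hookrightarrow C(\overline B)$ (as $p_0 > d$) solves $Lu_n = 0$ in $B$ with $u_n|_{\partial B} = f_n$.

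Since $c = 0$, $\vec b \in L^{p_0}(B)$, and $p_0 > d$, the ABP maximum principle is available for $W^{2,d}_{\rm loc} \cap C(\overline B)$ solutions and yields $\max_{\overline B}|u_n - u_m| \le \max_{\partial B}|f_n - f_m|$. Consequently $\{u_n\}$ is Cauchy in $C(\overline B)$ and converges uniformly to some $u \in C(\overline B)$ with $u|_{\partial B} = f$; passing to the limit in $\max_{\overline B}|u_n| \le \max_{\partial B}|f_n|$ yields \eqref{eq1025thu}, and uniqueness follows by applying the same estimate to the difference of any two solutions. For interior regularity, fix $B' \Subset B$ and invoke the interior $W^{2,p_0}$ estimate for non-divergence form operators with Dini-mean-oscillation principal coefficients and $L^{p_0}$ drift from \cite{DEK18}, obtaining $\|u_n\|_{W^{2,p_0}(B')} \le C\|u_n\|_{L^\infty(B)} \le C\|f\|_{L^\infty(\partial B)}$ uniformly in $n$. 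Weak compactness in $W^{2,p_0}(B')$, combined with uniform convergence $u_n \to u$, identifies the weak limit as $u$; thus $u \in W^{2,p_0}_{\rm loc}(B)$ and $Lu = 0$ a.e.\ in $B$.

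The main obstacle I anticipate is not conceptual but rather the verification that the three quoted tools --- existence in $W^{2,p_0}$ for the $L^{p_0}$-drift problem, the ABP maximum principle in this low-regularity setting, and interior $W^{2,p_0}$ estimates under Dini mean-oscillation principal coefficients --- apply with constants uniform across the smooth approximation. Under Conditions \ref{cond1} and \ref{cond2} all of these are in place, so the passage to the limit proceeds routinely; the only real care is needed in quoting an interior estimate that does not require any regularity of $\vec b$ beyond $L^{p_0}$, so that the $L^\infty$ bound on $u_n$ alone drives the $W^{2,p_0}(B')$ bound.
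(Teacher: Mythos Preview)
Your proposal is correct and follows essentially the same route as the paper: approximate $f$ by smooth data (the paper uses polynomials, you use Tietze extension plus mollification), solve via \cite[Theorem~4.2]{Krylov21}, use the maximum principle (the paper cites \cite[Theorem~1.1]{Safonov}, which is the ABP-type estimate you invoke) to get a Cauchy sequence in $C(\overline B)$, and pass to the limit with interior $W^{2,p_0}$ estimates. The only cosmetic difference is that the paper shows $\{u_n\}$ is Cauchy in $W^{2,p_0}(B')$ via the interior estimate applied to $u_n-u_m$ (citing \cite{Krylov21} with iteration rather than \cite{DEK18}), whereas you use uniform bounds plus weak compactness; both work.
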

\begin{proof}
To establish the uniqueness assertion, it is enough to show that if $u \in W^{2,p_0}_{\rm loc}(B) \cap C(\overline B)$ satisfies of $Lu=0$ in $B$ and $u=0$ on $\partial B$, then $u=0$.
Indeed, this is an immediate consequence of \cite[Theorem 1.1]{Safonov}.

To get the existence, let $\set{p_n}$ be a sequence of polynomials converging to $f$ uniformly on $\partial B$.
Since $Lp_n \in L^{p_0}(B)$, by \cite[Theorem 4.2]{Krylov21}, there exists a unique $v_n \in W^{2,p_0}(B)\cap W^{1,p_0}_0(B)$ that satisfies $L v_n=-Lp_n$ in $B$.
Then $u_n:=v_n+p_n$ satisfies $L u_n=0$ in $B$ and $u_n=p_n$ on $\partial B$.
The differences, $u_n-u_m$, clearly satisfies
\[
L(u_n-u_m)=0\;\text{ in }\;B,\quad u_n-u_m= p_n-p_m\;\text{ on }\;\partial B.
\]
By \cite[Theorem 1.1]{Safonov}, it follows that
\begin{equation}			\label{eq1026thu}
\max_{\overline B}\, \abs{u_n} \le \max_{\partial B} \,\abs{p_n},
\end{equation}
and also that $\set{u_n}$ is a Cauchy sequence in $C(\overline B)$.
Then, by \cite[Theorem 4.2]{Krylov21} combined with standard iteration method, for any $B' \Subset B$, we deduce
\begin{equation}			\label{eq1057thu}
\norm{u_n-u_m}_{W^{2,p_0}(B')} \le C\norm{u_n-u_m}_{L^{p_0}(B)},
\end{equation}
where the constant $C$ depends on $d$, $\nu$, $p_0$, $\omega_{\mathbf A}$, $B'$, and $B$.
Therefore, we deduce that $\set{u_n}$ converges in $W^{2,p_0}_{\rm loc}(B) \cap C(\overline B)$ to a solution of the Dirichlet problem \eqref{eq0853thu}.
The inequality \eqref{eq1025thu} follows from \eqref{eq1026thu} by taking the limit $n\to \infty$.
\end{proof}

Assume Conditions \ref{cond1} and \ref{cond2}, and further suppose $c = 0$.
For any $f \in C_c(\mathcal{B})$, and $B \Subset \mathcal{B}$, let $u \in W^{2, p_0}_{\rm loc}(B)\cap C(\overline B)$ be the solution of the Dirichlet problem
\[
\left\{
\begin{array}{ccc}
Lu=0 &\text{ in } & B,\\
u=f& \text{ on }&\partial B.
\end{array}
\right.
\]
Then for $x\in B$, the mapping $f \mapsto u(x)$ is a linear functional on $C_c(\mathcal{B})$ since
\[
\abs{u(x)} \le \max_{\overline B}\, \abs{u} \le \max_{\partial B}\, \abs{f} \le \max_{\mathcal{B}}\, \abs{f}.
\]
Moreover, it is a positive since $u(x) \ge 0$ if $f\ge 0$ by \cite[Theorem 1.1]{Safonov}.
Therefore, there exists a Radon measure $\omega^x_{B}$ on $\mathcal{B}$, which corresponds to the mapping.
It is clear that support of $\omega^x_{B}$ lies in $\partial B$ so that we have
\[
u(x)=\int_{\partial B} f\,d\omega^x_{B}.
\]
We call this measure $\omega^x_{B}$ the $L$-harmonic measure at $x$ in $B$.

\subsection{Supersolutions,  subsolutions, and Perron's method}
For the rest of this section, assume Conditions \ref{cond1} and \ref{cond2} hold, with $c = 0$.
\begin{definition}		\label{def_supersol}
Let $\mathcal{D} \subset \mathcal{B}$ be an open set.
An extended real-valued function $u$ is said to be an $L$-supersolution in $\mathcal{D}$ if the following conditions are satisfied:
\begin{enumerate}
\item[(i)]
$u$ is not identically $+\infty$ in any connected component of $\mathcal{D}$.
\item[(ii)]
$u>-\infty$ in $\mathcal{D}$.
\item[(iii)]
$u$ is lower semicontinuous in $\mathcal{D}$.
\item[(iv)]
For any ball $B \Subset \mathcal{D}$ and $x \in B$,  we have
\[
u(x) \ge \int_{\partial B} u\,d\omega^x_{B}.
\]
\end{enumerate}
An $L$-subsolution in $\mathcal{D}$ is defined in similarly, such that $u$ is an $L$-subsolution if and only if $-u$ is an $L$-supersolution.
If  $u$ is both $L$-supersolution and $L$-subsolution in $\mathcal{D}$, then we say that it is an $L$-solution in $\mathcal{D}$.
\end{definition}

\begin{lemma}				\label{lem_solution}
A function $u$ is an $L$-solution in $\mathcal{D}$ if and only if $u \in W^{2,p_0}_{\rm loc}(\mathcal{D})$ and it satisfies  $Lu=0$ a.e. in $\mathcal{D}$.
\end{lemma}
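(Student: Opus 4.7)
The plan is to prove the two directions by comparing $u$ on each ball $B \Subset \mathcal{D}$ with the unique solution of the Dirichlet problem in $B$ provided by Lemma~\ref{lem0114mon}, using the representation $v(x)=\int_{\partial B} u\,d\omega^x_B$ of that solution through the $L$-harmonic measure.

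For the implication ``$\Leftarrow$'', suppose $u\in W^{2,p_0}_{\mathrm{loc}}(\mathcal{D})$ satisfies $Lu=0$ a.e. Since $p_0>d$, Sobolev embedding gives $u\in C(\mathcal{D})$, so $u$ is finite-valued and both lower and upper semicontinuous. Fix a ball $B\Subset \mathcal{D}$. The boundary trace $u|_{\partial B}$ is continuous, so Lemma~\ref{lem0114mon} produces a unique $\tilde u\in W^{2,p_0}_{\mathrm{loc}}(B)\cap C(\overline B)$ solving $L\tilde u=0$ in $B$ with $\tilde u=u$ on $\partial B$. Since $u$ itself is such a solution on $B$, the uniqueness in Lemma~\ref{lem0114mon} forces $u\equiv \tilde u$ in $B$; by the very definition of $\omega^x_B$ one then has $u(x)=\int_{\partial B}u\,d\omega^x_B$ for every $x\in B$. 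Hence the mean value inequality (iv) of Definition~\ref{def_supersol} holds with equality, so $u$ is simultaneously an $L$-supersolution and an $L$-subsolution.

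For the implication ``$\Rightarrow$'', assume $u$ is an $L$-solution. Then $u$ is both lower and upper semicontinuous in $\mathcal{D}$, hence continuous and finite everywhere, and in particular bounded on $\overline B$ for any $B\Subset\mathcal{D}$. Let $v\in W^{2,p_0}_{\mathrm{loc}}(B)\cap C(\overline B)$ be the solution supplied by Lemma~\ref{lem0114mon} with $v=u$ on $\partial B$; by construction $v(x)=\int_{\partial B} u\,d\omega^x_B$. Applying the supersolution inequality to $u$ and the subsolution inequality to $u$ gives
\[
u(x)\ge \int_{\partial B} u\,d\omega^x_B = v(x) \quad\text{and}\quad u(x)\le \int_{\partial B} u\,d\omega^x_B = v(x),
\]
so $u=v$ on $B$. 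Consequently $u\in W^{2,p_0}_{\mathrm{loc}}(B)$ and $Lu=0$ a.e.\ in $B$; covering $\mathcal{D}$ by such balls yields the claim.

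The only real engine in the argument is the uniqueness half of Lemma~\ref{lem0114mon}, which rests on the maximum principle from \cite{Safonov}; once that is granted, both directions reduce to observing that the harmonic measure automatically transports the $W^{2,p_0}$-solution constructed on $B$ back to the given $u$. The main (minor) point to be careful about is verifying that $u$ is admissible as boundary data, namely that an $L$-solution is genuinely continuous (from being both semicontinuous) and hence bounded on $\partial B$, so that Lemma~\ref{lem0114mon} truly applies.
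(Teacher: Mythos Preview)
Your proof is correct and follows essentially the same approach as the paper's: both directions reduce to identifying $u$ on each ball $B\Subset\mathcal{D}$ with the unique $W^{2,p_0}_{\rm loc}(B)\cap C(\overline B)$ solution from Lemma~\ref{lem0114mon} via the $L$-harmonic measure representation. The paper's argument is more terse, but your version makes explicit the continuity of $u$ (from being both semicontinuous, or from Sobolev embedding) and the invocation of uniqueness in Lemma~\ref{lem0114mon}, which the paper leaves implicit.
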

\begin{proof}
Suppose $u$ is an $L$-solution in $\mathcal{D}$.
Then $u$ is continuous in $\mathcal{D}$, and for any ball $B \Subset \mathcal{D}$, we have
\begin{equation}			\label{eq0925thu}
u(x)=\int_{\partial B} u \,d\omega_{B}^x,\quad \forall x \in B.
\end{equation}
Therefore, by Lemma \ref{lem0114mon}, we have $u \in W^{2,p_0}_{\rm loc}(B)\cap C(\overline B)$ and it satisfies $Lu=0$ in $B$.
Since $B\Subset \mathcal{D}$ is arbitrary, we conclude that $u \in W^{2,p_0}_{\rm loc}(\mathcal{D})$ and $Lu=0$ a.e. in $\mathcal{D}$.
The converse is also true as $u \in C(\mathcal{D})$, and \eqref{eq0925thu} holds for any $B \Subset \mathcal{D}$.
\end{proof}

\begin{definition}
We  denote
\begin{align*}
\mathfrak{S}^+(\mathcal{D})&=\set{u: u \text{ is an $L$-supersolution in $\mathcal{D}$}},\\
\mathfrak{S}^-(\mathcal{D})&=\set{u: u \text{ is an $L$-subsolution in $\mathcal{D}$}}.
\end{align*}
\end{definition}

\begin{lemma}			\label{lem2334sun}
Let $u \in \mathfrak{S}^+(\mathcal{D})$. The following statements are valid:
\begin{enumerate}
\item (Strong minimum principle)
Let $\mathcal{D}$ be connected.
If $u \in \mathfrak{S}^+(\mathcal{D})$, then the infimum of $u$ is not attained in $\mathcal{D}$ unless $u$ is constant on $\mathcal{D}$.
\item (Comparison principle)
If $u \in \mathfrak{S}^+(\mathcal{D})$ and $\liminf_{y \to x,\; y\in \mathcal{D}} u(y) \ge 0$ for every $x \in \partial \mathcal{D}$, then $u \ge 0$ in $\mathcal{D}$.
\item
If $u \in \mathfrak{S}^+(\mathcal{D})$, then for every $x_0 \in \mathcal{D}$, we have $\liminf_{x \to x_0} u(x)=u(x_0)$.
\item
If $u$, $v \in \mathfrak{S}^+(\mathcal{D})$, and $c>0$, then $cu$, $u+v$, and $\min(u,v) \in \mathfrak{S}^+(\mathcal{D})$.
\item
(Pasting lemma)
For $v \in \mathfrak{S}^+(\mathcal{D}')$, where $\mathcal{D}'$ is an open subset of $\mathcal D$, define $w$ by
\[
w= \begin{cases}
\min(u,v)& \text{in }\;\mathcal{D}',\\
\phantom{\min}u &  \text{in }\; \mathcal{D} \setminus \mathcal{D}'.
\end{cases}
\]
If $w$ is lower semicontinuous in $\partial \mathcal{D}'$, then $w \in \mathfrak{S}^+(\mathcal{D})$.
\end{enumerate}
\end{lemma}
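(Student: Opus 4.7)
The plan is to prove the five parts using only the definitions, Lemma~\ref{lem0114mon}, and standard potential-theoretic arguments, handling parts (3) and (4) first, then (1) and (2), and finally (5). A recurring technical ingredient is that $\omega^{x}_{B}$ is a probability measure on $\partial B$ (since $c\equiv 0$ makes the constant function $1$ an $L$-solution, so $1=\int 1\,d\omega^{x}_{B}$) and has full topological support on $\partial B$; the latter combines Lemma~\ref{lem0114mon} with the strong maximum principle for classical solutions, so that any nonempty relatively open subset of $\partial B$ carries positive $\omega^{x}_{B}$-mass.

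For (3), lower semi-continuity gives $\liminf_{x\to x_0}u(x)\ge u(x_0)$; for the reverse inequality, argue by contradiction. If $\liminf_{x\to x_0}u(x)>u(x_0)+\epsilon$, then $u>u(x_0)+\epsilon$ on $\partial B_r(x_0)$ for all sufficiently small $r$, and integrating against $\omega^{x_0}_{B_r}$ contradicts the supermean inequality $u(x_0)\ge\int u\,d\omega^{x_0}_{B_r}$. For (4), lower semi-continuity is clearly preserved under $cu$, $u+v$, and $\min(u,v)$; the supermean inequality is linear in $u$, so it passes to $cu$ and $u+v$; and for $\min(u,v)$ at a point $x$, observing that $\min(u,v)(x)$ equals one of $u(x)$ or $v(x)$—say $u(x)$—one estimates $\min(u,v)(x)=u(x)\ge\int u\,d\omega^{x}_{B}\ge\int\min(u,v)\,d\omega^{x}_{B}$.

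For (1), let $m=\inf_{\mathcal D}u$ and $F=\set{u=m}$. Lower semi-continuity makes $F$ relatively closed in $\mathcal D$. To see that $F$ is also open, fix $x_0\in F$ and a ball $B=B_r(x_0)\Subset\mathcal D$; the supermean inequality combined with the probability-measure property forces $u=m$ to hold $\omega^{x_0}_{B}$-almost everywhere on $\partial B$, and the full-support property then gives $u\equiv m$ on $\partial B$. Hence $F=\mathcal D$ by connectedness. Part (2) follows by extending $u$ to $\overline{\mathcal D}$ via $u(x):=\liminf_{y\to x,\,y\in\mathcal D}u(y)$ on $\partial\mathcal D$; the extension is lower semi-continuous on the compact set $\overline{\mathcal D}$ with $u\ge 0$ on $\partial\mathcal D$, so its infimum is either attained on $\partial\mathcal D$ (giving $u\ge 0$ at once) or on a connected component of $\mathcal D$, in which case (1) forces $u$ constant there with value $\ge 0$ inherited from the boundary limit.

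The main obstacle is (5). Lower semi-continuity of $w$ is automatic away from $\partial\mathcal D'$ and is hypothesized at $\partial\mathcal D'\cap\mathcal D$. For the supermean inequality on a ball $B\Subset\mathcal D$, take increasing continuous approximations $f_n\nearrow w|_{\partial B}$ and use Lemma~\ref{lem0114mon} to produce classical solutions $h_n$ with boundary data $f_n$; by monotone convergence for $\omega^{x}_{B}$, it suffices to prove $w\ge h_n$ in $B$ for every $n$. Applying (2) to $u-h_n$ in $B$ (using $u\ge w\ge f_n=h_n$ on $\partial B$) yields $u\ge h_n$ throughout $B$, which settles the inequality on $B\setminus\mathcal D'$ since $w=u$ there. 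On $B\cap\mathcal D'$, apply (2) to $v-h_n$ in the open subset $B\cap\mathcal D'$: at points of $\partial B\cap\overline{\mathcal D'}$ the bound $v\ge w\ge f_n=h_n$ is direct, while at interface points $x\in B\cap\partial\mathcal D'$ one uses the identity $\liminf_{y\to x}\min(u,v)(y)=\min(\liminf u,\liminf v)$ together with the lsc hypothesis $\liminf_{y\to x}w(y)\ge w(x)=u(x)\ge h_n(x)$ to deduce $\liminf_{y\to x,\,y\in\mathcal D'}v(y)\ge h_n(x)$. Hence $v\ge h_n$ on $B\cap\mathcal D'$, so $w=\min(u,v)\ge h_n$ there as well. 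This propagation step—transferring a lim-inf bound on $w$ into one on $v$ alone at the inner interface—is the technical heart of the lemma.
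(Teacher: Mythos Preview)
Your proof is correct. The main departure from the paper's argument is in part~(e). The paper first replaces $v$ by $\min(u,v)$ (so that $w=v$ on $\mathcal D'$), observes that $w$ then satisfies the supermean inequality for a \emph{restricted} family of balls (balls contained in $\mathcal D'$ when the center lies in $\mathcal D'$, arbitrary balls in $\mathcal D$ otherwise), re-runs the proof of~(a) to obtain a strong minimum principle for $w$ directly, and only then compares $w$ with the solutions $w_n$ having continuous boundary data $h_n\nearrow w$. You instead invoke the already-established comparison principle~(b) twice---once for $u-h_n$ on $B$ and once for $v-h_n$ on $B\cap\mathcal D'$---and glue the two conclusions via the liminf bound at the interface $\overline B\cap\partial\mathcal D'$. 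Your route avoids re-deriving a minimum principle for $w$ and is arguably more streamlined; the trade-off is that it leans on the identity $\liminf\min(u,v)=\min(\liminf u,\liminf v)$ (which is indeed valid) and on a careful case split of $\partial(B\cap\mathcal D')$. One small slip: in your boundary decomposition, the ``direct'' estimate $v\ge w\ge f_n=h_n$ only makes sense on $\partial B\cap\mathcal D'$, where $v$ is defined, not on all of $\partial B\cap\overline{\mathcal D'}$; the points of $\partial B\cap\partial\mathcal D'$ should be handled by the interface argument, which does cover them since $w(x)=u(x)\ge f_n(x)=h_n(x)$ there. For~(a) and~(b) your arguments also differ mildly from the paper's (full support of $\omega_B^x$ plus lower semicontinuity versus a continuous-minorant contradiction; liminf-extension to $\overline{\mathcal D}$ versus zero-extension to $\overline{\mathcal B}$), but these are cosmetic variants.
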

\begin{proof}
Refer to the Appendix for the proof.
\end{proof}

\begin{definition}
For $u \in \mathfrak{S}^+(\mathcal{D})$ and a ball $B\Subset \mathcal{D}$, we define $\mathscr{E}_{B}u$ as follows:
\[
\mathscr{E}_{B}u(x) = \begin{cases}
\phantom{\int_{\partial B}}u(x)& \text{in }\;\mathcal{D} \setminus  B,\\
\int_{\partial B} u\,d\omega^x_{B} &  \text{in }\; B.
\end{cases}
\]
The function $\mathscr{E}_{B}u$ is called the lowering of $u$ over $B$.
\end{definition}

\begin{lemma}			\label{lem1033thu}
The following properties hold:
\begin{enumerate}
\item
$\mathscr{E}_{B}u \le u$.
\item
$\mathscr{E}_{B}u$ is an $L$-supersolution in $\mathcal{D}$.
\item
$\mathscr{E}_{B}u$ is an $L$-solution in $B$.
\end{enumerate}
\end{lemma}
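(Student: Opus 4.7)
The plan is to prove the three parts in the order (1), (3), (2), since (1) and (3) are exactly the ingredients needed to derive (2) from the Pasting Lemma (Lemma~\ref{lem2334sun}(e)).

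Part (1) is immediate: on $\mathcal{D}\setminus B$ the equality $\mathscr{E}_B u = u$ is the definition, while for $x\in B$ the inequality $\mathscr{E}_B u(x) = \int_{\partial B} u\,d\omega_B^x \le u(x)$ is simply property (iv) of Definition~\ref{def_supersol} applied to $u$ at the ball $B$. For part (3), since $u$ is lower semicontinuous on the compact set $\partial B$ it is bounded below and is the pointwise monotone limit of a sequence of continuous functions $f_n\uparrow u$ on $\partial B$. By Lemma~\ref{lem0114mon} each $f_n$ extends to a unique $u_n \in W^{2,p_0}_{\rm loc}(B)\cap C(\overline B)$ with $Lu_n=0$ in $B$, and the defining property of the $L$-harmonic measure gives $u_n(x)=\int_{\partial B} f_n\,d\omega_B^x$. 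Positivity of $\omega_B^x$ makes the sequence $\{u_n\}$ non-decreasing, and monotone convergence gives $u_n(x)\to \mathscr{E}_B u(x)$ pointwise in $B$. Provided $\mathscr{E}_B u\not\equiv +\infty$ on $B$, applying the Harnack inequality of Lemma~\ref{lem2151thu} along chains of small balls to the non-negative $L$-solutions $u_n-u_1$ yields locally uniform upper bounds, and the interior estimate \eqref{eq1057thu} then upgrades the pointwise convergence to convergence in $W^{2,p_0}_{\rm loc}(B)$ toward a function that both equals $\mathscr{E}_B u$ and solves $Lu=0$ almost everywhere.

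For part (2) I would apply the Pasting Lemma with $\mathcal{D}'=B$ and $v$ equal to the restriction of $\mathscr{E}_B u$ to $B$. Part (3) gives $v\in\mathfrak{S}^+(B)$, and part (1) gives $v\le u$ on $B$, so $\min(u,v)=v$ on $B$ and the function pasted in Lemma~\ref{lem2334sun}(e) coincides with $\mathscr{E}_B u$ on all of $\mathcal{D}$. The one hypothesis requiring actual verification is the lower semicontinuity of $\mathscr{E}_B u$ on $\partial B$: from outside $B$ this is inherited from the lower semicontinuity of $u$, while from inside the key ingredient is that $\omega_B^x$ converges weakly to $\delta_{x_0}$ as $x\to x_0 \in \partial B$, a direct consequence of the continuity up to the boundary asserted in Lemma~\ref{lem0114mon}. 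Concretely, for continuous $f_n\uparrow u$ on $\partial B$ one has $\int_{\partial B}u\,d\omega_B^x \ge \int_{\partial B} f_n\,d\omega_B^x$, and taking $\liminf$ as $x\to x_0$ inside $B$, followed by $n\to\infty$, gives $\liminf_{x\to x_0,\,x\in B}\mathscr{E}_B u(x) \ge u(x_0)=\mathscr{E}_B u(x_0)$.

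The main technical obstacle is the convergence step in part (3): one must address the potential degenerate case in which the monotone limit is identically $+\infty$ on $B$, which could in principle occur if $u$ fails to be $\omega_B^{x_0}$-integrable at every point. A standard Harnack-dichotomy argument shows that the set $\{x\in B:\mathscr{E}_B u(x)<+\infty\}$ is either empty or all of $B$; in the applications of this lemma later (notably Perron's method), non-triviality is automatic, so the argument above runs through without modification, and otherwise the conclusion is interpreted in the extended-real-valued sense.
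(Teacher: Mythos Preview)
Your proposal is correct and essentially identical to the paper's proof: both use the monotone approximation $h_n \nearrow u$ on $\partial B$ together with a Harnack-type convergence principle for increasing $L$-solutions to obtain (c), and both invoke the pasting lemma for (b) after verifying lower semicontinuity on $\partial B$ via the same boundary-continuity argument from Lemma~\ref{lem0114mon}. Your worry about the degenerate $+\infty$ case in (c) is dispatched more directly in the paper by simply noting that $\mathscr{E}_B u \le u$ from part (a), so the monotone limit cannot be identically $+\infty$ in $B$.
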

\begin{proof}
Refer to the Appendix for the proof.
\end{proof}

\begin{definition}
A collection $\mathscr F$ of $L$-supersolutions is said to be saturated in $\mathcal{D}$ if $\mathscr F$ satisfies the following conditions:
\begin{enumerate}
\item[(i)]
If $u$, $v \in \mathscr F$, then $\min(u,v) \in \mathscr F$.
\item[(ii)]
$\mathscr{E}_{B}u \in \mathscr F$ whenever $u \in \mathscr F$ and $B \Subset \mathcal{D}$.
\end{enumerate}
\end{definition}

\begin{lemma}			\label{lem1034thu}
Let $\mathscr{F}$ be a saturated collection of $L$-supersolutions in $\mathcal{D}$, and let
\[
v(x) := \inf_{u \in \mathscr{F}} u(x).
\]
If $v$ does not take the value $-\infty$ in $\mathcal{D}$, then $Lv=0$ in $\mathcal{D}$.
\end{lemma}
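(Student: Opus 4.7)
The plan is to run the classical Perron infimum argument, localized to balls $B \Subset \mathcal{D}$ via the lowering operator $\mathscr{E}_{B}$. Fix such a ball and an arbitrary $x_0 \in B$. By definition of infimum, choose $u_n \in \mathscr{F}$ with $u_n(x_0) \to v(x_0)$. Using saturation property (i), replace $u_n$ by $\min(u_1,\dots,u_n) \in \mathscr{F}$ so that the sequence is pointwise decreasing. Using saturation property (ii), replace $u_n$ by its lowering $w_n := \mathscr{E}_{B} u_n \in \mathscr{F}$. Since $w_n \le u_n$ by Lemma~\ref{lem1033thu}(1) and $w_n \ge v$ (because $w_n \in \mathscr{F}$), we still have $w_n(x_0) \to v(x_0)$. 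Monotonicity of the positive measure $\omega^x_{B}$ in the integrand shows lowering preserves order, so $\{w_n\}$ remains decreasing on all of $\mathcal{D}$. By Lemma~\ref{lem1033thu}(3), each $w_n$ is an $L$-solution in $B$.

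Next, set $w := \lim_{n\to\infty} w_n$ in $B$. Because $v \le w_n \le w_1$ and $v > -\infty$, $w$ is finite. Passing the equation to the limit requires upgrading pointwise monotone convergence to something strong enough. On any $B' \Subset B$, the interior $W^{2,p_0}$-estimate used to prove Lemma~\ref{lem0114mon} (from \cite[Theorem 4.2]{Krylov21}) gives $\Vert w_n\Vert_{W^{2,p_0}(B')} \le C\Vert w_n\Vert_{L^{p_0}(B'')}$ for an intermediate ball $B' \Subset B'' \Subset B$, which together with the monotone bound $v \le w_n \le w_1$ produces equicontinuity. Combined with Dini's theorem for monotone pointwise convergence to a measurable limit, this yields $w_n \to w$ locally uniformly, and weakly in $W^{2,p_0}_{\rm loc}(B)$. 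Hence $Lw=0$ a.e.\ in $B$, i.e., $w$ is an $L$-solution there.

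The core step is now to show $w \equiv v$ in $B$. By construction $v \le w$ with $v(x_0)=w(x_0)$. Suppose, for contradiction, that $w(y_0) > v(y_0)$ at some $y_0 \in B$. Pick $\tilde u \in \mathscr{F}$ with $\tilde u(y_0) < w(y_0)$, set $\tilde u_n := \min(u_n,\tilde u) \in \mathscr{F}$, and repeat the construction with $\tilde w_n := \mathscr{E}_{B} \tilde u_n \in \mathscr{F}$. Then $\tilde w_n \le w_n$, $\tilde w_n$ are decreasing $L$-solutions in $B$, $\tilde w_n \ge v$, and the same compactness argument gives a limit $\tilde w$ that is an $L$-solution in $B$, with $v(x_0) \le \tilde w(x_0) \le w(x_0)=v(x_0)$. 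Thus the $L$-solution $\tilde w - w$ is nonpositive in $B$ and vanishes at the interior point $x_0$. Applying the strong maximum principle for the operator $L$ (which follows from the Harnack inequality of Lemma~\ref{lem2151thu} chained over balls in the connected set $B$), we conclude $\tilde w \equiv w$ in $B$. But $\tilde w(y_0) \le \tilde u_n(y_0) \le \tilde u(y_0) < w(y_0) = \tilde w(y_0)$, a contradiction. Hence $v=w$ on $B$, so $v$ is an $L$-solution in $B$, and by Lemma~\ref{lem_solution}, $v \in W^{2,p_0}_{\rm loc}(B)$ with $Lv=0$ a.e. Since $B \Subset \mathcal{D}$ was arbitrary, the conclusion follows.

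I expect the main obstacle to be the passage-to-the-limit step: the $w_n$ are defined on all of $\mathcal{D}$ but satisfy $Lw_n=0$ only on $B$, they come from the Perron family rather than from a variational formulation, and one must ensure that monotone pointwise convergence upgrades to strong enough convergence (via the interior $W^{2,p_0}$-estimate and Harnack) so that both the limit $w$ is an $L$-solution and the strong maximum principle is available to conclude $\tilde w \equiv w$.
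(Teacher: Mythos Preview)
Your proof is correct but takes a genuinely different route from the paper. You run the classical Perron argument (as in Gilbarg--Trudinger): fix a ball $B$ and a point $x_0$, extract a minimizing sequence at $x_0$, pass to lowerings $w_n$, take the limit $w$, and then---this is the key difference---run a second minimizing sequence at an arbitrary $y_0$ together with the strong maximum principle to force $w=v$ throughout $B$. The paper instead invokes Choquet's lemma from abstract potential theory: there is a single countable subfamily $\{u_n\}\subset\mathscr{F}$ whose pointwise infimum $u$ satisfies $\hat u=\hat v$; then on any $B\Subset\mathcal{D}$ the chain $\hat v\le v\le \lim_n\mathscr{E}_B u_n\le \hat u=\hat v$ collapses, giving $v=\lim_n\mathscr{E}_B u_n$ directly, with no need for a point-by-point comparison or the strong maximum principle. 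The paper's route is shorter once Choquet's lemma is granted; yours is more self-contained and stays closer to the PDE toolbox already developed in the section.

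One small remark: your appeal to ``Dini's theorem for monotone pointwise convergence to a measurable limit'' is not quite right, since Dini requires the limit to be continuous, which you do not know a priori. The convergence you actually need follows cleanly either from the interior $W^{2,p_0}$ bound you quote (giving weak $W^{2,p_0}_{\rm loc}$ compactness, hence an $L$-solution limit) or, more in the spirit of the paper, from Lemma~\ref{lem_harnack_principle} applied to the increasing nonnegative $L$-solutions $w_1-w_n$. Either fix is routine and does not affect the structure of your argument.
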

\begin{proof}
Refer to the Appendix for the proof.
\end{proof}

For a continuous function $f$ on $\partial\mathcal{D}$, consider the Dirichlet problem
\[
\left\{
\begin{array}{ccc}
Lu=0 &\text{ in } & \mathcal{D},\\
u=f& \text{ on }&\partial \mathcal{D}.
\end{array}
\right.
\]
\begin{definition}			\label{def_perron}
We define an upper Perron's solution $\overline H_f$ and a lower Perron's solution $\underline H_f$ of the Dirichlet problem by
\begin{align*}
\overline H_f(x)=\inf\, \Set{v(x): v \in \mathfrak{S}^+(\mathcal{D}), \;\liminf_{y\to x_0,\, y \in \mathcal{D}}\, v(y) \ge f(x_0),\;\forall x_0 \in \partial\mathcal{D}},\\
\underline H_f(x)=\sup\, \Set{v(x): v \in \mathfrak{S}^-(\mathcal{D}), \;\limsup_{y\to x_0, \,y \in \mathcal{D}}\, v(y) \le f(x_0),\;\forall x_0\in \partial\mathcal{D}}.
\end{align*}
\end{definition}
Note that
\begin{equation}			\label{eq1208thu}
\underline H_{f}=-\overline H_{-f}.
\end{equation}

\begin{lemma}			\label{lem1210thu}
For a continuous function $f$ on $\partial\mathcal{D}$, the upper and lower Perron's solutions $\overline H_f$ and  $\underline H_f$ are both $L$-solutions.
\end{lemma}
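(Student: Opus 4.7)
By \eqref{eq1208thu} it suffices to treat the upper Perron solution, since the statement for $\underline H_f$ then follows from $\underline H_f = -\overline H_{-f}$. The plan is to realize $\overline H_f$ as the infimum of a saturated family of $L$-supersolutions and then invoke Lemma~\ref{lem1034thu}. Denote
\[
\mathscr{F}_f := \Set{v \in \mathfrak{S}^+(\mathcal{D}) : \liminf_{y\to x_0,\,y\in\mathcal{D}} v(y) \ge f(x_0),\ \forall x_0 \in \partial\mathcal{D}},
\]
so that $\overline H_f(x) = \inf_{v \in \mathscr{F}_f} v(x)$.

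The first step is to show $\mathscr{F}_f \neq \emptyset$ and $\overline H_f$ is real-valued on $\mathcal{D}$. Since $f$ is continuous on the compact set $\partial\mathcal{D}$, we have $\abs{f} \le M$ for some $M < \infty$. Because $c \equiv 0$ under the current standing assumption, every constant is an $L$-solution, hence the constant function $M$ lies in $\mathfrak{S}^+(\mathcal{D})\cap \mathscr{F}_f$; this gives $\overline H_f \le M$. For a matching lower bound, take any $v \in \mathscr{F}_f$ and observe that $v + M \in \mathfrak{S}^+(\mathcal{D})$ by Lemma~\ref{lem2334sun}(d) with $\liminf_{y\to x_0}(v+M)(y) \ge 0$ at every $x_0 \in \partial\mathcal{D}$; the comparison principle, Lemma~\ref{lem2334sun}(b), yields $v \ge -M$ in $\mathcal{D}$, so $-M \le \overline H_f \le M$.

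The second step is to check that $\mathscr{F}_f$ is saturated. Closure under pairwise minimum follows from Lemma~\ref{lem2334sun}(d) together with the pointwise bound $\liminf_{y\to x_0}\min(u,v)(y) \ge \min\bigl(\liminf u, \liminf v\bigr) \ge f(x_0)$. For closure under lowering, let $u \in \mathscr{F}_f$ and $B \Subset \mathcal{D}$: Lemma~\ref{lem1033thu}(ii) gives $\mathscr{E}_B u \in \mathfrak{S}^+(\mathcal{D})$, and since $\overline B \subset \mathcal{D}$, we have $\mathscr{E}_B u = u$ on a neighborhood of $\partial\mathcal{D}$ in $\mathcal{D}$, so the required boundary liminf inequality is inherited from $u$. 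Hence $\mathscr{E}_B u \in \mathscr{F}_f$.

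The final step applies Lemma~\ref{lem1034thu}: the saturated family $\mathscr{F}_f$ has infimum $\overline H_f$ bounded below by $-M$, so $\overline H_f$ never takes the value $-\infty$ in $\mathcal{D}$ and therefore $L\overline H_f = 0$ in $\mathcal{D}$; combined with Lemma~\ref{lem_solution}, this identifies $\overline H_f$ with an $L$-solution. The corresponding statement for $\underline H_f$ is then immediate from \eqref{eq1208thu}. I do not expect any serious obstacle here — the only place requiring care is the verification that lowering preserves the boundary liminf inequality, which succeeds precisely because $B$ is compactly contained in $\mathcal{D}$ so that $\mathscr{E}_B u$ and $u$ agree near $\partial\mathcal{D}$.
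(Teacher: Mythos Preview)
Your proof is correct and follows essentially the same approach as the paper: both arguments show $-M \le \overline H_f \le M$ via the constant $M$ and the comparison principle, invoke Lemma~\ref{lem1034thu} on the saturated family $\mathscr{F}_f$, and then use \eqref{eq1208thu} for $\underline H_f$. The only difference is that you spell out the verification of saturation (closure under $\min$ and under lowering) in detail, whereas the paper simply asserts that the family ``is clearly saturated.''
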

\begin{proof}
First, we observe that the constant function $M := \max_{\partial \mathcal{D}}\,\abs{f}$ belongs to $\mathscr{F}$, defined as
\[
\mathscr F:=\Set{v \in \mathfrak{S}^+(\mathcal{D}): \liminf_{y\to x_0,\, y \in \mathcal{D}}\, v(y) \ge f(x_0),\;\forall x_0\; \text{ on }\;\partial\mathcal{D}}.
\]
Moreover, by the comparison principle, it follows that $v \geq -M$ for every $v \in \mathscr{F}$.
Hence, we have $-M \leq \overline{H}_f \leq M$.
Leveraging Lemmas \ref{lem_solution}, \ref{lem1033thu}, and \ref{lem1034thu}, we conclude that $\overline H_f$ is an $L$-solution because the collection $\mathscr{F}$ is clearly saturated.
Additionally, from \eqref{eq1208thu}, we deduce that $\underline H_f$ is also an $L$-solution.
\end{proof}
\begin{definition}
                \label{def_regpt}
A point $x_0$ on $\partial \mathcal{D}$ is called a regular point if, for all $f \in C(\partial \mathcal{D})$, we have
\begin{equation}			\label{eq1212thu}
\lim_{x \to x_0, \, x\in \mathcal{D}}\, \underline H_f(x)=f(x_0).
\end{equation}
\end{definition}

In \eqref{eq1212thu} of the previous definition, it is possible to replace $\underline H_f$ with $\overline H_f$ due to identity  \eqref{eq1208thu}.

\section{Green's function in a ball}			\label{sec3}
Recall that $\mathcal{B} \subset \bR^d$ is a fixed open ball containing $\overline \Omega$.
In this section, we establish the existence of a Green's function in $\mathcal{B}$ and present its several properties.

\begin{theorem}\label{main_Green_const}
We assume the Conditions \ref{cond1} and \ref{cond2}.
Then, there exists a Green's function $G(x,y)$ of $L$ in $\mathcal{B}$ and the Green's function is unique in the following sense: if $v$ is the unique adjoint solution of the problem
\begin{equation}				\label{eq1738sun}
L^*v=D_{ij}(a^{ij}v)-D_i(b^iv)+cv=f\;\text{ in }\;\mathcal{B},\quad v=0\;\text{ on }\;\partial \mathcal{B},
\end{equation}
where $f \in L^p(\mathcal{B})$ with $p>d/2$, then $v$ is represented by
\begin{equation*}
v(y)=\int_\mathcal{B} G(x,y) f(x)\,dx.
\end{equation*}
Moreover, the following pointwise estimate holds:
\begin{equation*}		
G(x,y) \le C \abs{x-y}^{2-d},\quad  x \neq y \in \mathcal{B}.
\end{equation*}
Finally, 
$G^\ast(x,y):=G(y,x)$ becomes the Green's function for the adjoint operator $L^\ast$, which is characterized as follows:
for $q>d/2$ and $f\in L^{q}(\mathcal{B})$,
if $v \in W^{2,q}(\mathcal{B})\cap W^{1,q}_0(\mathcal{B})$ is the strong solution of
\begin{equation*}
-Lv=f \;\text{ in }\;\mathcal{B},\quad v=0\;\text{ on }\;\partial \mathcal{B},
\end{equation*}
then, we have the representation formula
\begin{equation*}
v(y)=\int_\mathcal{B} G^\ast(x,y)f(x)\,dx=\int_\mathcal{B} G(y,x)f(x)\,dx.
\end{equation*}
\end{theorem}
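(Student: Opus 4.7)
\emph{Proof plan.} The plan is to construct $G(\cdot,y)$ as the limit of approximations obtained by smoothing the delta source at $y$ and then verify the representation formula via integration by parts. For fixed $y\in\mathcal{B}$ and $\epsilon>0$, let $\rho_\epsilon^y\in C_c^\infty(\mathcal{B})$ be a standard mollifier supported in $B_\epsilon(y)$ with integral one. By Krylov's solvability theorem (Theorem 4.2 of \cite{Krylov21}, already invoked in the excerpt) there is a unique approximate Green's function $G_\epsilon(\cdot,y)\in W^{2,p_0}(\mathcal{B})\cap W^{1,p_0}_0(\mathcal{B})$ solving $L\,G_\epsilon(\cdot,y)=\rho_\epsilon^y$ in $\mathcal{B}$ with zero Dirichlet data (with the sign chosen to match the normalization in the statement).

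The crucial step is to prove the uniform pointwise estimate $|G_\epsilon(x,y)|\le C|x-y|^{2-d}$ whenever $|x-y|\ge 2\epsilon$, with $C$ independent of $\epsilon$ and $y$. For $x_0$ with $2r:=|x_0-y|>2\epsilon$, the function $G_\epsilon(\cdot,y)$ solves $LG_\epsilon=0$ in $B_r(x_0)$; interior $L^{p_0}$-estimates (Krylov) combined with the Harnack inequality in Lemma \ref{lem2151thu} bound $|G_\epsilon(x_0,y)|$ by the $L^1$-average of $G_\epsilon(\cdot,y)$ over $B_{r/2}(x_0)$. That average is controlled by duality: taking $w$ as the adjoint solution of $L^*w=\chi_{B_{r/2}(x_0)}$ with $w=0$ on $\partial\mathcal{B}$, integration by parts yields $\int G_\epsilon\cdot\chi_{B_{r/2}(x_0)} = \int \rho_\epsilon^y\, w\to w(y)$, and the adjoint continuity theory of \cite{DEK18} (under the Dini mean oscillation condition) delivers the scaling $w(y)\le Cr^2$, from which the $r^{2-d}$ bound follows.

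Given the uniform estimate, a compactness/diagonal argument extracts $G(\cdot,y)$ as an $L^1_{\loc}(\mathcal{B}\setminus\{y\})$-limit of a subsequence of $G_\epsilon(\cdot,y)$, and the pointwise bound is inherited. For the representation formula, given $f\in L^p(\mathcal{B})$ with $p>d/2$, let $v$ be the continuous adjoint solution of $L^*v=f$ with $v=0$ on $\partial\mathcal{B}$ (existence and continuity from \cite{DEK18}). Integration by parts, justified by approximating $v$ suitably, gives
\[
\int_{\mathcal{B}} G_\epsilon(\cdot,y)\, f \;=\; \int_{\mathcal{B}} G_\epsilon(\cdot,y)\, L^*v \;=\; \int_{\mathcal{B}} (L\,G_\epsilon(\cdot,y))\, v \;=\; \int_{\mathcal{B}} \rho_\epsilon^y\, v \;\longrightarrow\; v(y)
\]
as $\epsilon\to 0$, the last step by continuity of $v$ at $y$. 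Dominated convergence on the left (with dominant $C|x-y|^{2-d}|f(x)|$, integrable on $\mathcal{B}$ for $p>d/2$) yields $v(y)=\int_{\mathcal{B}}G(x,y)f(x)\,dx$. Uniqueness of $G$ is then immediate, since any two candidates differ by a function annihilating every admissible $f$.

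For the adjoint claim that $G^*(x,y):=G(y,x)$ is the Green's function for $L^*$, I would run the parallel construction with $L$ and $L^*$ interchanged, producing $\tilde G(x,y)$ with $L^*\tilde G_\epsilon(\cdot,y)=\rho_\epsilon^y$ and representation $v(y)=\int\tilde G(x,y)f(x)\,dx$ for $v$ solving $-Lv=f$. The identification $\tilde G(x,y)=G(y,x)$ follows from a double integration by parts at the approximation level:
\[
\int \rho_\epsilon^{y_1}\, \tilde G_\delta(\cdot,y_2) \;=\; \int (L\,G_\epsilon(\cdot,y_1))\, \tilde G_\delta(\cdot,y_2) \;=\; \int G_\epsilon(\cdot,y_1)\, L^*\tilde G_\delta(\cdot,y_2) \;=\; \int G_\epsilon(\cdot,y_1)\, \rho_\delta^{y_2},
\]
and letting $\epsilon,\delta\to 0$ (using joint continuity off the diagonal, again from the adjoint continuity theory) yields $\tilde G(y_1,y_2)=G(y_2,y_1)$. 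The hard part throughout is the pointwise bound in Step 2: the sharp $r^2$ scaling for the dual test solution requires the fine continuity modulus from \cite{DEK18}, and this is precisely where the Dini mean oscillation hypothesis is indispensable --- for merely continuous coefficients, Bauman's counterexample in \cite{Bauman84a} rules out such a pointwise bound.
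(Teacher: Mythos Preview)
Your overall strategy---construct approximate Green's functions $G_\epsilon(\cdot,y)$ for the full operator $L$, prove a uniform pointwise bound, pass to the limit, and verify the representation by duality---is a legitimate route, and it is essentially how \cite{HK20} builds $G_0$ for the principal part $L_0$. The paper, however, takes a different and more economical path: it \emph{starts} from the already-constructed $G_0$ of \cite{HK20}, solves the correction problem $Lu^y=-b^iD_iG_0(\cdot,y)-cG_0(\cdot,y)$ in $W^{2,p_1}(\mathcal{B})\cap W^{1,p_1}_0(\mathcal{B})$, and sets $G(x,y)=G_0(x,y)+u^y(x)$. The pointwise bound then follows almost for free: $G_0$ already satisfies it, and $u^y$ obeys a clean $L^1$ estimate on balls via Sobolev embedding, so the local $L^\infty$-to-$L^1$ estimate finishes the job. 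This perturbative scheme avoids having to redo the hard analysis of \cite{HK20} for an operator with lower-order terms.

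Your sketch has a genuine gap precisely at the step you flag as ``the hard part.'' You assert that for $w$ solving $L^*w=\chi_{B_{r/2}(x_0)}$ with zero boundary data, the continuity theory of \cite{DEK18} ``delivers the scaling $w(y)\le Cr^2$'' at a point $y$ with $|y-x_0|=2r$. That estimate is not a consequence of \cite{DEK18}: what \cite{DEK18} gives is continuity of adjoint solutions together with an $L^p\to L^\infty$ bound, which for $f=\chi_{B_{r/2}(x_0)}$ yields only $\|w\|_\infty\le C r^{d/p}$ with $p>d/2$, hence an exponent strictly less than $2$. Feeding that into your local-maximum argument produces $G_\epsilon(x_0,y)\lesssim r^{d/p-d}$, which is strictly worse than $r^{2-d}$. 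Obtaining the sharp $r^2$ scaling for $w(y)$ is tantamount to already knowing the Green's function bound for $L^*$, so the argument as written is circular. To make the direct approach work you would need to reproduce, for $L$ with lower-order terms, the weak-type machinery $|\{G_\epsilon(\cdot,y)>t\}|\le Ct^{-d/(d-2)}$ that \cite{HK20} develops for $L_0$; this is feasible but is real work, and it is exactly what the paper sidesteps by perturbing off $G_0$. Your treatment of the representation formula and of the symmetry $G^*(x,y)=G(y,x)$ is fine in spirit and close to the paper's (which identifies $G^*_\epsilon(y,x)$ with $\fint_{B_\epsilon(x)}G(\cdot,y)$ and passes to the limit).
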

\begin{proof}
In \cite{HK20}, the Green's functions are constructed for the operator $L_0:= a^{ij}D_{ij}$ without lower-order coefficients in $C^{1,1}$ domains, a class that certainly includes $\mathcal{B}$.
To ensure that the Green's function is nonnegative, we invert the sign of the function constructed in \cite{HK20}. Thus, formally, we have:
\[
L_0 G_0(\,\cdot,y)=-\delta_y\;\text{ in }\;\mathcal{B}\quad\text{and}\quad G_0(\,\cdot,y)=0 \;\text{ on }\;\partial \mathcal{B}.
\]
To establish the Green's function for $L$ in $\mathcal{B}$, we address the following problem for each $y \in \mathcal{B}$:
\begin{equation}			\label{eq1123thu}
Lu= -b^iD_i G_0(\,\cdot,y) -c G_0(\,\cdot,y)\;\text{ in }\;\mathcal{B},\quad u=0 \;\text{ on }\;\partial\mathcal{B}.
\end{equation}
According to the pointwise estimates for the Green's function provided in \cite{HK20}, it is clear that for each $y \in \mathcal{B}$, we have
\[
b^i D_i G_0(\,\cdot, y),\; cG_0(\,\cdot, y) \in L^{p_1}(\mathcal{B}),
\]
for some $p_1>1$.
According to \cite[Theorem 4.2]{Krylov21}, there exists a unique solution $u=u^y \in W^{2,p_1}(\mathcal{B})\cap W^{1,p_1}_0(\mathcal{B})$ to the problem \eqref{eq1123thu}.
It is noteworthy that by H\"older's inequality and the Sobolev embedding theorem, we obtain
\begin{align}				\nonumber
\norm{u^y}_{L^{1}(B_r(x_0) \cap \mathcal{B})}  &\le C r^{2+d(1-1/p_1)} \norm{u^y}_{L^{p_1d/(d-2p_1)}(B_{r}(x_0)\cap \mathcal{B})} \le C r^{2+d(1-1/p_1)} \norm{u^y}_{L^{p_1d/(d-2p_1)}(\mathcal{B})}\\
						\label{eq1410thu}
&\le C r^{2+d(1-1/p_1)} \norm{u^y}_{W^{2,p_1}(\mathcal{B})} \le Cr^{2+d(1-1/p_1)},
\end{align}

Now, we will demonstrate that
\[
G(x,y):=G_0(x,y)+u^y(x)
\]
serves as the Green's function for $L$ in $\mathcal{B}$.
For any $f \in C^\infty_c(\mathcal{B})$, let $v \in L^{p_1'}(\mathcal{B})$ be the solution of \eqref{eq1738sun}.
By \cite[Theorem 1.8]{DEK18}, we find that $v \in C(\overline{\mathcal{B}})$.
Moreover, from the definition of the solution to the problem \eqref{eq1738sun}, we have
\begin{equation}			\label{eq1428wed}
\int_{\mathcal{B}} f w = \int_{\mathcal{B}} v Lw,\quad \forall w \in  W^{2,p_1}(\mathcal{B})\cap W^{1,p_1}_0(\mathcal{B}).
\end{equation}
Since $u^y \in W^{2,p_1}(\mathcal{B})\cap W^{1,p_1}_0(\mathcal{B})$ is a solution of \eqref{eq1123thu}, it follows that
\begin{equation}			\label{eq1805sun}
\int_{\mathcal{B}} f u^y = \int_{\mathcal{B}} v Lu^y = -\int_{\mathcal{B}} b^i D_i G_0(\,\cdot, y)v - \int_{\mathcal{B}} c G_0(\,\cdot, y)v.
\end{equation}
On the other hand, taking  $w=G_0^\epsilon(\,\cdot, y)$ in \eqref{eq1428wed}, where $G_0^\epsilon(\,\cdot, y)$ is the approximate Green's function of $L_0$ as considered in \cite{HK20} (with the sign inverted), we obtain 
\begin{equation}			\label{eq2103sat}
\int_{\mathcal{B}} f G_0^\epsilon(\,\cdot,y)=\int_{\mathcal{B}} v L G_0^\epsilon(\,\cdot,y)=
-\fint_{B_\epsilon(y)\cap \mathcal{B}} v + \int_{\mathcal{B}} b^i D_i G_0^\epsilon(\,\cdot, y)v+\int_{\mathcal{B}} c G_0^\epsilon(\,\cdot, y)v.
\end{equation}

\begin{lemma}			\label{lem1631wed}
There exists a sequence $\set{\epsilon_k}$ converging to zero such that
\begin{align*}
G^{\epsilon_k}_0(\,\cdot, y) &\rightharpoonup G_0(\,\cdot, y) \;\text { weakly in }\;L^{p}(\mathcal{B})\; \text{ for }\;1<p<d/(d-2),\\
D G^{\epsilon_k}_0(\,\cdot, y) &\rightharpoonup DG_0(\,\cdot, y) \;\text { weakly in } L^{p}(\mathcal{B})\; \text{ for }\;1<p<d/(d-1).
\end{align*}
\end{lemma}
\begin{proof}
The first part has already been established in \cite{HK20}, relying on the following uniform estimate, which holds for $0 < \epsilon < \diam \mathcal{B}$:
\[
\Abs{\set{x \in \mathcal{B} : \abs{G_0(x,y)} > t} } \le C t^{-\frac{d}{d-2}},\quad \forall t>0.
\]
Additionally, as noted in \cite{HK20}, we have the expression:
\[
G_0^\epsilon(x,y)=\fint_{B_\epsilon(y)\cap \mathcal{B}} G_0(x,z)\,dz.
\]
From this identity, combined with the continuity of $G_0(x,\cdot\,)$ in $\mathcal{B} \setminus \set{y}$ and the estimates (1.12) of \cite{HK20}, we deduce, for $x \neq y \in \mathcal{B}$, the following estimate, which holds uniformly for $0 < \epsilon < \frac13 \abs{x-y}$:
\[
\abs{D_x G_0^\epsilon(x,y)} \le C \abs{x-y}^{1-d}.
\]
By replicating essentially the same proof as Lemma 2.13 in \cite{HK20}, we obtain the following estimate, valid for $0 < \epsilon < \diam \mathcal{B}$:
\[
\int_{\mathcal{B}\setminus B_r(y)} \abs{D_x G_0^\epsilon(x,y)}^2\,dx \le C r^{2-d},\quad \forall r>0.
\]
Then, employing the same proof technique as Lemma 2.19 in \cite{HK20}, we find that for $0 < \epsilon < \text{diam}(\mathcal{B})$:
\[
\Abs{\set{x \in \mathcal{B} : \abs{DG_0(x,y)} > t} } \le C t^{-\frac{d}{d-1}},\quad \forall t>0.
\]
The second assertion follows easily from the uniform estimate above.
\end{proof}

Therefore, by taking the limit $\epsilon_k \to 0$ in \eqref{eq2103sat}, we obtain
\begin{equation}			\label{eq1806sun}
\int_{\mathcal{B}} f G_0(\,\cdot,y) =-v(y) + \int_{\mathcal{B}} b^i D_i G_0(\,\cdot, y)v+\int_{\mathcal{B}} c G_0(\,\cdot, y)v,
\end{equation}
where we utilized Lemma \ref{lem1631wed} and the fact that  $v \in C(\overline{\mathcal{B}})$.
By combining \eqref{eq1805sun} and \eqref{eq1806sun}, we deduce
\[
\int_{\mathcal{B}} f G(\,\cdot, y)= \int_{\mathcal{B}} f G_0(\,\cdot, y) + \int_{\mathcal{B}} f u^y= -v(y).
\]
Therefore, we conclude that $G$ is the Green's function for $L$ in $\mathcal{B}$.

Next, we derive the following pointwise estimates for $G(x,y)$:
\begin{equation}			\label{1553thu}
G(x,y) \le C \abs{x-y}^{2-d},\quad  x \neq y \in \mathcal{B}.
\end{equation}
To see this, let $w=G(\,\cdot, y)$, and for $x_0 \in \mathcal{B}$ with $x_0 \neq y$, let $r=\abs{x_0-y}/3$.
Note that
\[
Lw=0 \;\text{ in }\;B_{3r}(x_0) \cap \mathcal{B}.
\]
By \cite[Theorem 4.2]{Krylov21}, employing a well-known iteration method, we establish that $w \in W^{2,p_0/2}(B_{2r}(x_0))$ and
\[
\sup_{B_{r}(x_0) \cap \mathcal{B}} \abs{w} \le C r^{-d} \norm{w}_{L^{1}(B_{2r}(x_0) \cap \mathcal{B})}.
\]
Through the pointwise estimates for $G_0(x, y)$ and \eqref{eq1410thu}, we find
\[
\norm{w}_{L^{1}(B_{2r}(x_0) \cap \mathcal{B})} \le C r^2+ Cr^{2+d(1-1/p_1)} \le C r^2.
\]
Consequently, by combining these results together, we deduce
\[
G(x_0,y) \le C r^{2-d}=C \abs{x_0-y}^{2-d},
\]
thereby confirming the estimate \eqref{1553thu}.

Now, by borrowing an idea from \cite{KL21}, we establish that
\begin{equation}		\label{symmetry}
G(x,y)=G^*(y,x),\quad x\neq y \in \Omega,
\end{equation}
where $G^*$ is the Green's function for $L^*$ in $\mathcal{B}$.
For any $x \in \mathcal{B}$ and $\epsilon>0$, set $f=-\abs{\mathcal{B} \cap B_\epsilon(x)}^{-1} \mathbbm{1}_{\mathcal{B} \cap B_\epsilon(x)}$, and let $v=G^*_\epsilon(\,\cdot, x)$ be a unique solution of the problem
\[
L^*v=f \;\text{ in }\; \mathcal{B},\quad v=0\; \text{ on }\;\partial \mathcal{B}.
\]
Then, for any $y \in \Omega$, we have
\begin{equation}			\label{eq0803mon}
G^*_\epsilon(y,x)=v(y)=-\int_{\mathcal{B}} f G(\,\cdot, y)=\fint_{B_\epsilon(x)\cap \mathcal{B}} G(\,\cdot, y).
\end{equation}
Especially when $y \neq x$ and $\epsilon<\frac12\abs{x-y}$, utilizing the pointwise estimate for $G(x,y)$ in \eqref{1553thu}, we derive
\[
\abs{G^*_\epsilon(y,x)} \le C\abs{x-y}^{2-d},
\]
which aligns with \cite[equation (2.27)]{HK20}.
Then by following the same argument in \cite{HK20}, we see that $G^*_\epsilon(\,\cdot, x)$ converges locally uniformly to $G^*(\,\cdot, x)$ in $\Omega \setminus \set{x}$.
By taking the limit $\epsilon \to 0$ in \eqref{eq0803mon}, we obtain \eqref{symmetry}.
The proof is completed.
\end{proof}
The subsequent theorem furnishes two-sided pointwise estimates for Green's function $G(x,y)$ in $\mathcal{B}$ for $x$, $y \in \overline{\mathcal{B}'}$.

\begin{theorem}		\label{thm_green_function}
Under the assumptions of Conditions \ref{cond1} and \ref{cond2}, there exists a positive constant $N$, dependent solely on $d$, $\lambda$, $\Lambda$, $\omega_{\mathbf A}$, $\diam \mathcal{B}$, and $\dist(\mathcal{B}', \partial\mathcal{B})$, such that the following inequalities hold for all $x \neq y \in \overline{\mathcal{B}'}$: 
\[
N^{-1} \abs{x-y}^{2-d} \le G(x,y)\le N \abs{x-y}^{2-d}.
\]
\end{theorem}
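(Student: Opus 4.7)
The upper bound $G(x,y) \le N|x-y|^{2-d}$ is already supplied by Theorem~\ref{main_Green_const}, so my task reduces to proving the matching lower bound $G(x,y) \ge N^{-1}|x-y|^{2-d}$ for $x,y \in \overline{\mathcal{B}'}$, $x \ne y$. I will split into a \emph{near-pole} regime $|x-y| \le r_1$ (for a small $r_1$ to be chosen) and a \emph{far-field} regime $|x-y| \ge r_1$, with the latter handled by a standard Harnack chain based on Lemma~\ref{lem2151thu}.

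In the near-pole regime, I leverage the decomposition $G(x,y) = G_0(x,y) + u^y(x)$ from the proof of Theorem~\ref{main_Green_const}. The pointwise estimates from \cite{HK20} give the two-sided bound $c_0|x-y|^{2-d} \le G_0(x,y) \le C_0|x-y|^{2-d}$ on $\overline{\mathcal{B}'}$, so the annular average satisfies $\fint_{A_r(y)} G_0(\cdot,y) \ge c_0' r^{2-d}$, where $A_r(y) := B_r(y)\setminus B_{r/2}(y)$. The correction $u^y$ lies in $W^{2,p_1}(\mathcal{B})\cap W^{1,p_1}_0(\mathcal{B})$ for some $p_1 > 1$ strictly (the subcriticality $p_0 > d$ is what affords this), and a direct H\"older computation using the pointwise bounds $|DG_0(\cdot,y)|\lesssim |\cdot-y|^{1-d}$ and $|G_0(\cdot,y)| \lesssim |\cdot-y|^{2-d}$ shows that $\|u^y\|_{W^{2,p_1}(\mathcal{B})}$ is uniformly bounded for $y \in \overline{\mathcal{B}'}$. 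Sobolev embedding then gives $\|u^y\|_{L^{q_1}(\mathcal{B})} \le C$ with $q_1 = p_1d/(d-2p_1) > d/(d-2)$, whence H\"older yields the crucial subleading estimate $\fint_{A_r(y)} |u^y| \lesssim r^{2-d/p_1} = o(r^{2-d})$ as $r\to 0^+$. For $r \le r_1$ chosen small enough to absorb this error, one obtains $\fint_{A_r(y)} G(\cdot,y) \ge \tfrac{c_0'}{2}r^{2-d}$, yielding some $x_\star \in A_r(y)$ with $G(x_\star,y) \gtrsim r^{2-d}$. Since $G(\cdot,y)$ is an $L$-solution in $\mathcal{B} \setminus\{y\}$, the small-ball Harnack inequality of Lemma~\ref{lem2151thu}, iterated along a fixed-shape cover of the fattened annulus $B_{3r/2}(y)\setminus B_{r/3}(y)$ by a bounded number of small balls each disjoint from $\{y\}$, promotes this one-point estimate into $G(x,y) \gtrsim r^{2-d}$ for every $x$ with $|x-y| = r$.

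For the far-field regime $|x-y| \ge r_1$ with $x,y \in \overline{\mathcal{B}'}$, I fix $x' \in \partial B_{r_1}(y)$, where the previous step yields $G(x',y) \ge c\, r_1^{2-d}$, and connect $x'$ to $x$ by a chain of balls of radius less than $r_0$, all contained in $\mathcal{B}$ and staying at distance at least $r_1/2$ from $y$. The chain length is bounded in terms of $d$, $\diam\mathcal{B}$, $r_1$, and $\dist(\mathcal{B}',\partial\mathcal{B})$; Lemma~\ref{lem2151thu} applied along the chain yields $G(x,y) \ge c' > 0$, which is at least $c''|x-y|^{2-d}$ since $|x-y| \le \diam\mathcal{B}'$ is bounded above.

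The only delicate point is verifying that $u^y$ is genuinely subleading near the pole, that is, $\fint_{A_r}|u^y| = o(r^{2-d})$ with a constant uniform in $y \in \overline{\mathcal{B}'}$; this hinges on $p_1 > 1$ being \emph{strict} in the $W^{2,p_1}$-estimate of $u^y$, which in turn is exactly the quantitative content of the subcritical integrability $p_0 > d$ of $\vec b$ and $c$ in Condition~\ref{cond1}. Everything else -- the uniform bound on $\|u^y\|_{W^{2,p_1}(\mathcal{B})}$ via H\"older against the pointwise estimates for $G_0$ and $DG_0$, and the Harnack chaining in both regimes -- is a routine application of the tools already assembled in the paper.
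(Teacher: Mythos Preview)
Your argument is correct but takes a genuinely different route from the paper. The paper obtains the lower bound in one stroke via a test-function identity for the \emph{adjoint} Green's function: with $r=\tfrac14|x_0-y_0|$ and a cutoff $\eta\in C^\infty_c(B_{4r}(x_0))$ equal to $1$ on $B_{3r}(x_0)$, the representation $1=\eta(x_0)=\int G^*(\cdot,x_0)\,L\eta$ combined with the obvious bounds on $L\eta$ yields $1\lesssim r^{d-2}\sup_{B_{4r}\setminus B_{3r}}G^*(\cdot,x_0)$; a Harnack chain for the adjoint operator $L^*$ (invoking \cite{GK24}) on the annulus then gives $G(x_0,y_0)=G^*(y_0,x_0)\gtrsim|x_0-y_0|^{2-d}$ directly, with no near/far case split. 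Your approach instead perturbs off the known two-sided bound for $G_0$ from \cite{HK20}, shows via the $W^{2,p_1}$ estimate and Sobolev embedding that the correction $u^y$ is subleading near the pole, and then chains via the Harnack inequality for $L$ itself (Lemma~\ref{lem2151thu}) rather than for $L^*$. This has the virtue of using only the direct-operator Harnack already proved in the paper, at the cost of the extra case analysis and of relying on the interior lower bound for $G_0$ (which in \cite{HK20} ultimately rests on an adjoint Harnack for $L_0^*$ anyway). One small point to make explicit: your Harnack steps require $G(\cdot,y)\ge 0$, which is not stated in Theorem~\ref{main_Green_const} but follows at once from the representation $v(y)=\int G(y,x)f(x)\,dx$ for $-Lv=f\ge 0$, $v|_{\partial\mathcal{B}}=0$, together with the maximum principle.
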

\begin{proof}
The upper bound is already given in \eqref{1553thu}.
We will utilize the Harnack inequality for nonnegative solutions of
\[
L^*u=D_{ij}(a^{ij}u)-D_i(b^i u)+cu=0,
\]
as established in \cite[Theorem 4.3]{GK24}.

For $x_0$, $y_0 \in \overline{\mathcal{B}'}$, let $r=\frac14 \abs{x_0-y_0}$.
Consider a smooth function $\eta \in C^\infty_c(B_{4r}(x_0))$ such that $\eta=1$ in $B_{3r}(x_0)$, $\norm{D\eta}_{L^\infty} \le 2/r$, and $\norm{D^2 \eta}_{L^\infty} \le 4/r^2$.
Then
\begin{align*}
1&=\eta(x_0)=\int_{\mathcal{B}} G(x_0, \cdot\,) L \eta= \int_{B_{4r}(x_0)\setminus B_{3r}(x_0)} G^*(\,\cdot,x_0) \left(a^{ij}D_{ij}\eta + b^i D_i \eta +c \eta \right)\\
&\le \sup_{B_{4r}\setminus B_{3r}} G^*(\,\cdot,x_0) \left( \frac{C}{r^2}\, \abs{B_{4r}} +\frac{C}{r} \,\norm{\vec b}_{L^{p_0}(B_{4r})} \abs{B_{4r}}^{1-1/p_0}+C \,\norm{\vec c}_{L^{p_0/2}(B_{4r})} \abs{B_{4r}}^{1-2/p_0}\right)\\
& \le C r^{d-2} \sup_{B_{4r}\setminus B_{3r}} G^*(\,\cdot, x_0),
\end{align*}
where we used H\"older's inequality and the fact that $p_0>d$.

We note that any two points in $B_{4r}(x_0)\setminus B_{3r}(x_0)$ can be connected by a chain of $k$ balls with radius $2r$ contained in $B_{5r}(x_0)\setminus B_{2r}(x_0)$, and $k$ does not exceed a number $k_0=k_0(d)$.
Therefore, applying the Harnack inequality for $G^*(\,\cdot, x_0)$ successively, we derive
\[
\sup_{B_{4r}\setminus B_{3r}} G^*(\,\cdot, x_0) \le C G^*(y_0,x_0)= C G(x_0,y_0).
\]
By putting these together, we deduce
\[
G(x_0,y_0) \ge C \abs{x_0-y_0}^{2-d}. \qedhere
\]
\end{proof}

\begin{remark}
In Theorem~\ref{thm_green_function}, it is worth noting that the upper bound applies to all $x$ and $y$ in $\mathcal{B}$, not just restricted to $x$ and $y$ both belonging to $\overline{\mathcal{B}'}$, such that
\[
G(x,y) \le N \abs{x-y}^{2-d},\quad x \neq y \in \mathcal{B}.
\]
Moreover, if $\vec b$ and $c$ have higher regularity, we also have pointwise estimates for $D_xG(x,y)$ and $D^2_x(x,y)$.
For instance, if $c \in L^{p_0}(\mathcal{B})$, then it follows that $G(\,\cdot, y) \in W^{2,p_0}_{\rm loc}(\mathcal{B} \setminus \set{y})$, and thus we have
\[
\abs{D_x G(x,y)} \le C \abs{x-y}^{1-d},\quad x \neq y \in \mathcal{B},
\]
where the constant $C$ also depends on $\norm{c}_{L^{p_0}(\mathcal{B})}$.
If $\vec b$ and $c$ are of Dini mean oscillation in $\mathcal{B}$, then we obtain
\[
\abs{D_x^2 G(x,y)} \le C \abs{x-y}^{-d},\quad x \neq y \in \mathcal{B},
\]
where the constant $C$ also depends on $\omega_{\vec b}$ and $\omega_c$, defined analogously to $\omega_{\mathbf A}$ in Condition \ref{cond2}. Refer to \cite[Theorem 1.5]{DEK18} for details.
\end{remark}

\section{Potential and Capacity}		\label{sec4}
Throughout this section, we assume Conditions \ref{cond1} and \ref{cond2} hold, with $c = 0$.

\begin{definition}		
A  function is said to as a potential in $\mathcal{B}$ if it satisfies the following conditions:
\begin{enumerate}
\item[(i)]
It is a nonnegative $L$-supersolution in $\mathcal{B}$.
\item[(ii)]
It is finite at every point in $\mathcal{B}$.
\item[(iii)]
It vanishes continuously on $\partial \mathcal{B}$.
\end{enumerate}
\end{definition}

\begin{definition}		\label{def2249sat}
For any extended real-valued function $u$ on $\mathcal{D}$, we define a lower semicontinuous regularization $\hat u$ of $u$ by
\[
\hat u(x)=\sup_{r>0} \left( \inf_{\mathcal{D} \cap B_r(x)} u\right).
\]
\end{definition}

\begin{remark}
We caution the reader that the preceding definition agrees with the one in some literature such as \cite{CC72}, but differs from the one appearing in some other literature such as \cite{Bauman85, Brelot, Helms}, where $\hat{u}$ is defined as
\[
\hat u(x)=\liminf_{y \to x} u(y).
\]
However, this distinction does not affect the conclusions of our results.
\end{remark}

\begin{lemma}			\label{lem1655sun}
The followings are true:
\begin{enumerate}[leftmargin=*]
\item
$\hat u \le u$.
\item
$\hat u$ is lower semicontinuous .
\item
If $v$ is a lower semicontinuous function satisfying $v \le u$, then $v \le \hat u$.
\item
If $u$ is lower semicontinuous, then $\hat u=u$.
\end{enumerate}
\end{lemma}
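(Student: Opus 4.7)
The plan is to verify the four assertions in turn by unpacking the definition $\hat u(x)=\sup_{r>0}\inf_{\mathcal{D}\cap B_r(x)}u$ and exploiting the monotonicity of the infimum in $r$.

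For (a), I would observe that for every $r>0$ the point $x$ lies in $\mathcal{D}\cap B_r(x)$ (assuming $x\in\mathcal{D}$; otherwise the statement is to be interpreted on the natural domain of the formula), so $\inf_{\mathcal{D}\cap B_r(x)}u\le u(x)$. Taking the supremum over $r>0$ then gives $\hat u(x)\le u(x)$.

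For (b), I would show directly that the superlevel set $\{\hat u>\alpha\}$ is open for every $\alpha\in\mathbb{R}$. If $\hat u(x)>\alpha$, then by definition there exists $r>0$ with $\inf_{\mathcal{D}\cap B_r(x)}u>\alpha$. For any $y\in B_{r/2}(x)$ the inclusion $\mathcal{D}\cap B_{r/2}(y)\subset\mathcal{D}\cap B_r(x)$ gives $\inf_{\mathcal{D}\cap B_{r/2}(y)}u\ge\inf_{\mathcal{D}\cap B_r(x)}u>\alpha$, hence $\hat u(y)>\alpha$. So $B_{r/2}(x)\subset\{\hat u>\alpha\}$, proving lower semicontinuity.

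For (c), fix $x\in\mathcal{D}$ and $\varepsilon>0$. By lower semicontinuity of $v$, there is $r>0$ such that $v(y)>v(x)-\varepsilon$ for all $y\in\mathcal{D}\cap B_r(x)$. Combined with $v\le u$, this yields $\inf_{\mathcal{D}\cap B_r(x)}u\ge v(x)-\varepsilon$, so $\hat u(x)\ge v(x)-\varepsilon$; letting $\varepsilon\downarrow 0$ concludes the proof. Finally, (d) follows at once from (a) and (c) with $v=u$: (a) gives $\hat u\le u$, and applying (c) to $v=u$ gives the reverse inequality, so $\hat u=u$.

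There is no real obstacle here; the entire lemma is a direct unpacking of definitions, and the only care required is making sure the ball $B_{r/2}(y)$ sits inside $B_r(x)$ in step (b) and that the lower semicontinuity of $v$ is invoked correctly in step (c). The whole argument can be written in a few lines.
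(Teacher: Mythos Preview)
Your proof is correct and follows essentially the same route as the paper's. The only cosmetic differences are that for (b) the paper verifies $\hat u(x)\le\liminf_{y\to x}\hat u(y)$ directly (via the inclusion $B_r(y)\subset B_{2r}(x)$) rather than working with superlevel sets, and for (c) the paper first observes $\hat v\le\hat u$ and then shows $v=\hat v$ using lower semicontinuity, which amounts to the same $\varepsilon$-argument you give.
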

\begin{proof}
Refer to the Appendix for the proof.
\end{proof}

\begin{definition}			\label{def_reduced_function}
For $E \Subset \mathcal{B}$, we define
\[
u_E(x):=\inf \,\Set{v(x):  v \in \mathfrak{S}^+(\mathcal{B}),\; v\ge 0\text{ in }\mathcal{B},\; v \ge 1 \text{ in }E},\quad x \in \mathcal{B}.
\]
A lower semicontinuous regularization $\hat u_E$ is called the capacitary potential of $E$.
\end{definition}

\begin{lemma}				\label{lem1013sat}
The following properties hold:
\begin{enumerate}[leftmargin=*]
\item
$0\le \hat u_E \le u_E \le 1$.
\item
$u_E=1$ in $E$ and $\hat u_E =1$ in $\intr (E)$.
\item
$\hat u_E$ is a potential.
\item
$u_E=\hat u_E$ in $\mathcal{B}\setminus \overline{E}$ and
$L u_E=L \hat u_E=0$ in $\mathcal{B}\setminus \overline{E}$.
\item
If $x_0\in \partial E$, we have
\[
\liminf_{x\to x_0} \hat u_{E}(x) =\liminf_{x\to x_0,\; x\in \mathcal{B}\setminus  E} \hat u_E(x).
\]
\end{enumerate}
\end{lemma}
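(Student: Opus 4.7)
Parts (a) and (b) follow directly from the definitions. Since $c=0$, the constant function $1$ lies in $\mathfrak{S}^+(\mathcal{B})$ and is admissible in the infimum defining $u_E$, so $u_E \le 1$; nonnegativity of every admissible $v$ gives $u_E \ge 0$; and Lemma \ref{lem1655sun}(a) yields $\hat u_E \le u_E$. For (b), every admissible $v$ satisfies $v(x) \ge 1$ on $E$, so $u_E = 1$ on $E$; for $x \in \intr(E)$, any $B_r(x) \subset E$ gives $\inf_{\mathcal{B} \cap B_r(x)} u_E = 1$, whence $\hat u_E(x) = 1$.

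For (c), lower semicontinuity of $\hat u_E$ is Lemma \ref{lem1655sun}(b), and $\hat u_E \in [0,1]$ follows from (a). The crucial step is the supermean inequality for $\hat u_E$. First, $u_E$ itself satisfies it: for any admissible $v$ and any ball $B \Subset \mathcal{B}$ with $x \in B$,
\[
v(x) \ge \int_{\partial B} v\,d\omega^x_B \ge \int_{\partial B} u_E\,d\omega^x_B,
\]
and taking the infimum over $v$ yields $u_E(x) \ge \int_{\partial B} u_E\,d\omega^x_B$. To upgrade to $\hat u_E$, set $f(x) := \int_{\partial B} \hat u_E\,d\omega^x_B$ for $x \in B$; approximating the bounded lower semicontinuous function $\hat u_E|_{\partial B}$ from below by continuous functions and combining Lemma \ref{lem0114mon} with the maximum principle realizes $f$ as a monotone limit of $L$-solutions, hence an $L$-solution itself and in particular continuous on $B$. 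Since $f(x) \le \int_{\partial B} u_E\,d\omega^x_B \le u_E(x)$, and $f$ is continuous with $f \le u_E$, Lemma \ref{lem1655sun}(c) forces $f \le \hat u_E$ on $B$, which is exactly the supermean for $\hat u_E$. For continuous vanishing on $\partial \mathcal{B}$, I exhibit a single admissible supersolution with this property: take $v(x) := M\,G(x, y_0)$ for some $y_0 \in E$ and $M$ large enough that $v \ge 1$ on $E$; the Green's function $G(\cdot, y_0)$ is a nonnegative $L$-supersolution on $\mathcal{B}$ with a pole only at $y_0$, and by the construction in Theorem \ref{main_Green_const} it vanishes continuously on $\partial \mathcal{B}$, so $0 \le \hat u_E \le u_E \le v \to 0$ near $\partial \mathcal{B}$.

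For (d), set $\mathcal{D} := \mathcal{B} \setminus \overline E$. For any ball $B \Subset \mathcal{D}$ and any admissible $v$, the function $\mathscr{E}_B v$ agrees with $v$ on $\mathcal{B} \setminus B \supset E$, so $\mathscr{E}_B v \ge 1$ on $E$, and Lemma \ref{lem1033thu} puts $\mathscr{E}_B v$ in $\mathfrak{S}^+(\mathcal{B})$; thus $\mathscr{E}_B v$ remains admissible. Combined with closure under $\min$ from Lemma \ref{lem2334sun}(4), the restriction of the defining family to $\mathcal{D}$ is a saturated collection of $L$-supersolutions in $\mathcal{D}$. Lemma \ref{lem1034thu} then gives $L u_E = 0$ in $\mathcal{D}$, and Lemma \ref{lem_solution} shows $u_E \in W^{2,p_0}_{\loc}(\mathcal{D}) \cap C(\mathcal{D})$, so $u_E = \hat u_E$ throughout $\mathcal{D}$ and $L \hat u_E = 0$ there as well.

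For (e), the inequality $\liminf_{x \to x_0}\hat u_E(x) \le \liminf_{x \to x_0,\, x \in \mathcal{B} \setminus E}\hat u_E(x)$ is immediate by enlarging the class of test sequences. For the reverse, write $m$ for the right-hand side and take $x_n \to x_0$ in $\mathcal{B}$. Passing to subsequences, I may assume all $x_n$ lie in one of the three sets $\intr(E)$, $\mathcal{B}\setminus E$, or $\partial E \cap E$. In the first case, $\hat u_E(x_n) = 1 \ge m$; in the second, $\liminf \hat u_E(x_n) \ge m$ by definition. For $x_n \in \partial E \cap E$: either $\hat u_E(x_n) = 1 \ge m$ along a further subsequence, or $\hat u_E(x_n) < 1$, in which case the defining formula for $\hat u_E(x_n)$ together with $u_E = 1$ on $E$ furnishes $y_n \in B_{1/n}(x_n) \cap (\mathcal{B}\setminus E)$ with $\hat u_E(y_n) \le u_E(y_n) \le \hat u_E(x_n) + \epsilon_n$ for some $\epsilon_n \to 0$; since $y_n \to x_0$ and $y_n \in \mathcal{B}\setminus E$, taking $\liminf$ reduces this to the second case. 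The principal technical hurdle I anticipate is the supermean-upgrade from $u_E$ to $\hat u_E$ in (c), which hinges on solvability of the $L$-Dirichlet problem in $B$ with bounded LSC boundary data.
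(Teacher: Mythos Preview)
Your proof is correct and follows essentially the same approach as the paper's: parts (a)--(d) match almost verbatim (supermean for $u_E$ first, then upgrade to $\hat u_E$ via a continuous $L$-solution majorized by $u_E$ and Lemma~\ref{lem1655sun}(c); saturation plus Lemma~\ref{lem1034thu} for (d); a scaled Green's function $kG(\cdot,y_0)$ as the barrier forcing vanishing on $\partial\mathcal{B}$). For (e), the paper argues directly with infima over shrinking balls---showing $\inf_{B_r(x_0)\setminus\{x_0\}}\hat u_E=\inf_{B_r(x_0)\setminus\{x_0\}}u_E=\inf_{(B_r(x_0)\setminus\{x_0\})\setminus E}u_E$ via $u_E=1$ on $E$---whereas you run the equivalent sequence-based case analysis; both hinge on the same observation that $u_E=1$ on $E$ forces the relevant infimum to be realized in $\mathcal{B}\setminus E$.
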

\begin{proof}
Refer to the Appendix for the proof.
\end{proof}

The following lemma is a restatement of \cite[Theorem 12.1]{Herve}.
We present the proof in a more accessible manner in the Appendix.
\begin{lemma}			\label{prop0800sun}
Let $u$ be a nonnegative $L$-supersolution in $\mathcal{B}$, and assume $u<+\infty$ in $\mathcal B$.
For an open set $\mathcal{D}\subset \mathcal{B}$, we define
\[
\mathscr{F}_{\mathcal D}=\set{v \in \mathfrak{S}^+(\mathcal{B}):  v\ge 0\,\text{ in }\,\mathcal{B},\; v - u \in \mathfrak{S}^+(\mathcal{D})}.
\]
For $x \in \mathcal{B}$, we then define
\[
\mathbb{P}_\mathcal{D} u(x)=\inf_{v \in \mathscr{F}_{\mathcal D}} v(x).
\]
The following properties hold:
\begin{enumerate}[leftmargin=*]
\item
$\mathbb{P}_\mathcal{D} u \in \mathfrak{S}^+(\mathcal{B})$ and $0 \le \mathbb{P}_\mathcal{D} u \le u$ in $\mathcal{B}$.
\item
$\mathbb{P}_\mathcal{D}u \in \mathscr{F}_{\mathcal D}$, i.e.,  $\mathbb{P}_\mathcal{D} u - u$ is an $L$-supersolution in $\mathcal{D}$.
\item
$\mathbb{P}_\mathcal{D} u$ is an $L$-solution in $\mathcal{B}\setminus \overline{\mathcal{D}}$.
\item
If $v \in \mathscr{F}_{\mathcal{D}}$, then $v-\mathbb{P}_\mathcal{D} u$ is a nonnegative $L$-supersolution in $\mathcal{B}$.
\end{enumerate}
\end{lemma}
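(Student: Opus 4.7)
The plan is to treat $\mathscr{F}_{\mathcal D}$ as a Perron-type family adapted to the two-region structure of the problem and extract $\mathbb{P}_{\mathcal D} u$ via the decreasing-sequence machinery of Lemma~\ref{lem1034thu}, with the lower semicontinuity of the infimum as the main technical crux.

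First, I would verify the closure properties of $\mathscr{F}_{\mathcal D}$. Observe $u \in \mathscr{F}_{\mathcal D}$ trivially (since $u - u \equiv 0 \in \mathfrak{S}^+(\mathcal{D})$), so $\mathscr{F}_{\mathcal D}$ is nonempty and $0 \le \mathbb{P}_{\mathcal D} u \le u$ on $\mathcal{B}$. The identity $\min(v_1, v_2) - u = \min(v_1 - u, v_2 - u)$, combined with Lemma~\ref{lem2334sun}(4) applied once to $v_1, v_2$ and once to $v_1 - u, v_2 - u$, gives closure under $\min$. For any ball $B \Subset \mathcal{B} \setminus \overline{\mathcal{D}}$, the lowering $\mathscr{E}_B v$ agrees with $v$ on $\mathcal{D}$, so Lemma~\ref{lem1033thu} yields $\mathscr{E}_B v \in \mathscr{F}_{\mathcal D}$. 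Consequently, the restriction of $\mathscr{F}_{\mathcal D}$ to $\mathcal{B} \setminus \overline{\mathcal{D}}$ is a saturated family of nonnegative $L$-supersolutions there, and Lemma~\ref{lem1034thu} delivers $L \mathbb{P}_{\mathcal D} u = 0$ on $\mathcal{B} \setminus \overline{\mathcal{D}}$, proving (3).

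For (1) and (2), I would derive the mean value inequalities
\[
\mathbb{P}_{\mathcal D} u(x) \ge \int_{\partial B} \mathbb{P}_{\mathcal D} u \, d\omega^x_B \quad (B \Subset \mathcal{B}), \qquad (\mathbb{P}_{\mathcal D} u - u)(x) \ge \int_{\partial B}(\mathbb{P}_{\mathcal D} u - u) \, d\omega^x_B \quad (B \Subset \mathcal{D})
\]
from the corresponding inequalities for each $v \in \mathscr{F}_{\mathcal D}$ (respectively $v - u$), by monotone convergence along a decreasing sequence $w_n \searrow \mathbb{P}_{\mathcal D} u$ constructed by diagonalization over a countable dense subset. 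The delicate step is the lower semicontinuity of $\mathbb{P}_{\mathcal D} u$, since infima over l.s.c.\ families need not be l.s.c. I would resolve this through the Choquet-Brelot regularization: let $\hat w$ denote the l.s.c.\ regularization of $w := \mathbb{P}_{\mathcal D} u$ in the sense of Definition~\ref{def2249sat}, and use the mean value inequalities together with Lemma~\ref{lem1655sun} to verify that $\hat w \in \mathfrak{S}^+(\mathcal{B})$ and $\hat w - u \in \mathfrak{S}^+(\mathcal{D})$, so $\hat w \in \mathscr{F}_{\mathcal D}$. The minimality of $\mathbb{P}_{\mathcal D} u$ then forces $w \le \hat w$, and since always $\hat w \le w$, equality holds, simultaneously yielding (1), (2), and the lower semicontinuity of $\mathbb{P}_{\mathcal D} u$.

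Finally, (4) follows from (1)--(3). For $v \in \mathscr{F}_{\mathcal D}$, set $w := v - \mathbb{P}_{\mathcal D} u \ge 0$ (by the infimum property). On a ball $B \Subset \mathcal{B} \setminus \overline{\mathcal{D}}$, $\mathbb{P}_{\mathcal D} u$ is an $L$-solution by (3) while $v$ is an $L$-supersolution, so $w$ is an $L$-supersolution there classically. On a ball $B \Subset \mathcal{D}$, both $v - u$ and $\mathbb{P}_{\mathcal D} u - u$ satisfy the mean value inequality with respect to $\omega^x_B$, so subtracting gives the same for $w$. The case of balls meeting $\partial \mathcal{D}$ is handled by a pasting argument in the spirit of Lemma~\ref{lem2334sun}(5), applied to the restrictions of $w$ to $B \cap \mathcal{D}$ and $B \setminus \overline{\mathcal{D}}$. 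The main obstacle throughout this program is the lower semicontinuity step in the proof of (1)--(2), which is precisely the purpose of the Choquet-Brelot regularization and which requires that $\hat w$ inherit the supersolution structure in both regions simultaneously.
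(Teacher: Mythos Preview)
Your outline for (a)--(c) follows the paper's strategy, but there is a real gap in your treatment of (b), and your argument for (d) contains an outright error.

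For (b), you propose to verify directly that $\hat w - u \in \mathfrak{S}^+(\mathcal{D})$. Membership in $\mathfrak{S}^+(\mathcal{D})$ requires lower semicontinuity, and since $u$ is only assumed l.s.c.\ (not continuous), the difference $\hat w - u$ of two l.s.c.\ functions has no reason to be l.s.c. The paper closes this gap by introducing the notion of a \emph{nearly $L$-supersolution} (mean value inequality with an upper integral, no semicontinuity imposed) and proving an additive splitting of the regularization: if $w$, $u$, and $w - u$ are all nearly supersolutions, then $\hat w = \hat u + \widehat{(w - u)}$. Since $\hat u = u$, this yields $\hat w - u = \widehat{(w - u)}$, which is l.s.c.\ by construction and a genuine supersolution in $\mathcal{D}$. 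This identity is precisely the technical point your sketch glosses over.

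For (d), your step on balls $B \Subset \mathcal{D}$ fails: from $(v - u)(x) \ge \int_{\partial B}(v - u)\,d\omega^x_B$ and $(\mathbb{P}_{\mathcal D}u - u)(x) \ge \int_{\partial B}(\mathbb{P}_{\mathcal D}u - u)\,d\omega^x_B$ one cannot subtract to obtain any inequality for $w = (v - u) - (\mathbb{P}_{\mathcal D}u - u)$; the difference of two supersolutions is not a supersolution. The pasting lemma you cite for balls meeting $\partial\mathcal{D}$ pastes a minimum, not a difference, so it does not apply either. The paper's argument for (d) is considerably more delicate: for an arbitrary ball $B \Subset \mathcal{B}$ and any $f \in C(\partial B)$ with $f \le v - \mathbb{P}_{\mathcal D} u$, one builds a competitor by pasting $\min(v - H_f,\, \mathbb{P}_{\mathcal D} u)$ inside $B$ with $\mathbb{P}_{\mathcal D} u$ outside (where $H_f$ is the $L$-harmonic extension of $f$), verifies through several l.s.c.\ checks on $\partial B$ and on $\mathcal{D}\cap\partial B$ that this competitor lies in $\mathscr{F}_{\mathcal D}$, and then invokes the minimality of $\mathbb{P}_{\mathcal D} u$ to force $v - H_f \ge \mathbb{P}_{\mathcal D} u$ in $B$. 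This establishes that $v - \mathbb{P}_{\mathcal D} u$ is a nearly $L$-supersolution in $\mathcal{B}$, after which the same regularization splitting as above finishes the job.
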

\begin{proof}
Refer to the Appendix for the proof.
\end{proof}

\begin{remark}			\label{rmk1317tue}
It is evident from Lemma \ref{prop0800sun} (a) that $\mathbb{P}_\mathcal{D} u$ is a potential if $u$ is.
Moreover, as $u \in \mathscr{F}_{\mathcal{D}}$, Lemma \ref{prop0800sun} (d) implies that $u-\mathbb{P}_\mathcal{D} u$ is a nonnegative $L$-supersolution in $\mathcal{B}$.
Therefore, combined with Lemma \ref{prop0800sun} (b), we deduce that $u-\mathbb{P}_\mathcal{D} u$ is a nonnegative $L$-solution in $\mathcal{D}$.
\end{remark}

\begin{proposition}			\label{prop0800tue}
Let $u$ be a nonnegative $L$-supersolution in $\mathcal{B}$, and assume $u<+\infty$ in $\mathcal B$.
For each $x\in \mathcal{B}$, there exists a Radon measure $\mu^x$ on $\mathcal{B}$ such that
\begin{equation}			\label{eq0452fri}
\mu^x(\mathcal{D})=\mathbb{P}_{\mathcal{D}} u(x)
\end{equation}
for every open set $\mathcal{D}$ in $\mathcal{B}$.
\end{proposition}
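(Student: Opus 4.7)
The plan is to construct $\mu^x$ via the Carath\'eodory outer-measure method applied to the set function $\tau(\mathcal{D}) := \mathbb{P}_\mathcal{D} u(x)$ on open subsets of $\mathcal{B}$. Concretely, set
\[
\mu^{*x}(E) := \inf\bigl\{\tau(\mathcal{D}) : E \subset \mathcal{D} \subset \mathcal{B},\; \mathcal{D}\text{ open}\bigr\},
\]
and aim to show that $\mu^{*x}$ is a metric outer measure that coincides with $\tau$ on open sets. If so, Carath\'eodory's theorem makes every Borel set $\mu^{*x}$-measurable, and the restriction $\mu^x$ of $\mu^{*x}$ to the Borel $\sigma$-algebra is the desired Radon measure; finiteness follows from $\mu^x(\mathcal{B}) = \tau(\mathcal{B}) \le u(x) < +\infty$ via Lemma~\ref{prop0800sun}(a).

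First I would verify two structural properties of $\tau$. Monotonicity $\tau(\mathcal{D}_1) \le \tau(\mathcal{D}_2)$ for $\mathcal{D}_1 \subset \mathcal{D}_2$ follows at once, since the locality of $\mathfrak{S}^+$ gives $\mathscr{F}_{\mathcal{D}_2} \subset \mathscr{F}_{\mathcal{D}_1}$, hence $\mathbb{P}_{\mathcal{D}_1} u \le \mathbb{P}_{\mathcal{D}_2} u$ pointwise. Finite subadditivity is obtained by exhibiting $v := \mathbb{P}_{\mathcal{D}_1} u + \mathbb{P}_{\mathcal{D}_2} u$ as an element of $\mathscr{F}_{\mathcal{D}_1 \cup \mathcal{D}_2}$: it is a nonnegative $L$-supersolution on $\mathcal{B}$ by Lemma~\ref{lem2334sun}(d), and in each $\mathcal{D}_j$ the splitting $v - u = (\mathbb{P}_{\mathcal{D}_j} u - u) + \mathbb{P}_{\mathcal{D}_{3-j}} u$ exhibits $v - u$ as a sum of an $L$-supersolution on $\mathcal{D}_j$ (Lemma~\ref{prop0800sun}(b)) and an $L$-supersolution on $\mathcal{B}$, so by locality $v - u \in \mathfrak{S}^+(\mathcal{D}_1 \cup \mathcal{D}_2)$. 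Countable subadditivity of $\tau$ then follows by combining finite subadditivity with continuity along increasing open families $\mathcal{D}_n \uparrow \mathcal{D}$, verified by checking that the monotone pointwise limit $\lim \mathbb{P}_{\mathcal{D}_n} u$ belongs to $\mathscr{F}_\mathcal{D}$. These properties lift to $\mu^{*x}$ in the usual way, and $\mu^{*x}(\mathcal{D}) = \tau(\mathcal{D})$ on open $\mathcal{D}$ is immediate from monotonicity.

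The main obstacle is to upgrade $\mu^{*x}$ to a metric outer measure, which reduces to the strong additivity $\tau(\mathcal{D}_1 \cup \mathcal{D}_2) \ge \tau(\mathcal{D}_1) + \tau(\mathcal{D}_2)$ whenever $\overline{\mathcal{D}_1} \cap \overline{\mathcal{D}_2} = \emptyset$ (the reverse inequality being finite subadditivity). Setting $W := \mathbb{P}_{\mathcal{D}_1 \cup \mathcal{D}_2} u$ and $w_i := \mathbb{P}_{\mathcal{D}_i} u$, the plan is to show $W - w_2 \in \mathscr{F}_{\mathcal{D}_1}$, whence $(W - w_2)(x) \ge w_1(x)$ by the defining property of $\mathbb{P}_{\mathcal{D}_1}$. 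Nonnegativity $W \ge w_2$ follows from monotonicity of $\tau$ applied pointwise; by Remark~\ref{rmk1317tue}, both $W - u$ and $u - w_2$ are $L$-solutions on $\mathcal{D}_2$, so $W - w_2 = (W - u) + (u - w_2)$ is an $L$-solution on $\mathcal{D}_2$; by Lemma~\ref{prop0800sun}(c), $w_2$ is an $L$-solution on $\mathcal{B} \setminus \overline{\mathcal{D}_2}$, so $W - w_2$ is an $L$-supersolution there. The delicate step is verifying lower semicontinuity and the mean-value inequality of $W - w_2$ across $\partial\mathcal{D}_2$, which I would handle by replacing $W - w_2$ with its lsc regularization (Definition~\ref{def2249sat}) and invoking Lemma~\ref{lem1655sun}. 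Similarly, $(W - w_2) - u = (W - u) - w_2$ is an $L$-supersolution on $\mathcal{D}_1$ because $w_2$ is an $L$-solution in a neighborhood of $\overline{\mathcal{D}_1}$ by Lemma~\ref{prop0800sun}(c). With additivity in hand, Carath\'eodory furnishes the Radon measure $\mu^x$ satisfying \eqref{eq0452fri}.
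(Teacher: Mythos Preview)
Your Carath\'eodory route is sound in outline and is a close cousin of the paper's argument, which instead packages the same ingredients---monotonicity, strong (sub)additivity, and continuity from below---and feeds them into Bourbaki's extension theorem for set functions on open sets. The paper isolates these ingredients as Lemma~\ref{lem1513tue}, proving in fact the stronger identity $\mathbb{P}_{\mathcal{D}_1\cup\mathcal{D}_2}u+\mathbb{P}_{\mathcal{D}_1\cap\mathcal{D}_2}u=\mathbb{P}_{\mathcal{D}_1}u+\mathbb{P}_{\mathcal{D}_2}u$ for arbitrary open sets. Either measure-theoretic engine works once those potential-theoretic facts are in hand.

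The genuine weak point in your proposal is the ``delicate step'' for additivity. The lsc-regularization patching of $W-w_2$ across $\partial\mathcal{D}_2$ is not justified: $W-w_2$ is a difference of two lower semicontinuous functions and need not be lsc, and you have given no argument that its regularization (or $W-w_2$ itself) satisfies the mean-value inequality for balls meeting $\partial\mathcal{D}_2$. The clean fix is already in your toolkit and is exactly what the paper uses (there labeled (P3)): since $W=\mathbb{P}_{\mathcal{D}_1\cup\mathcal{D}_2}u\in\mathscr{F}_{\mathcal{D}_2}$ (because $W-u\in\mathfrak{S}^+(\mathcal{D}_1\cup\mathcal{D}_2)\subset\mathfrak{S}^+(\mathcal{D}_2)$), Lemma~\ref{prop0800sun}(d) gives directly that $W-w_2$ is a nonnegative $L$-supersolution on all of $\mathcal{B}$---no patching needed. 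Then $(W-w_2)-u=(W-u)-w_2\in\mathfrak{S}^+(\mathcal{D}_1)$ since $w_2$ is an $L$-solution in a neighborhood of $\overline{\mathcal{D}_1}$, so $W-w_2\in\mathscr{F}_{\mathcal{D}_1}$ and you are done. A smaller point: your ``locality'' claim that $v-u\in\mathfrak{S}^+(\mathcal{D}_1)\cap\mathfrak{S}^+(\mathcal{D}_2)$ implies $v-u\in\mathfrak{S}^+(\mathcal{D}_1\cup\mathcal{D}_2)$ is true but not immediate from Definition~\ref{def_supersol}, since condition (iv) is stated for \emph{all} balls $B\Subset\mathcal{D}$; it needs a short strong-minimum-principle argument of the kind used in the proof of Lemma~\ref{lem2334sun}(e).
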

\begin{proof}
For any open subset $\mathcal{D}$ of $\mathcal{B}$, consider $\mathbb{P}_{\mathcal D}$ as given in Lemma \ref{prop0800sun}.
The decomposition
\[
u=\mathbb{P}_\mathcal{D} u+(u-\mathbb{P}_\mathcal{D} u)=:\mathbb{P}_{\mathcal D}u+\mathbb{P}^{\perp}_{\mathcal D}u
\]
is characterized by the following conditions:
\begin{enumerate}
\item[(P1)]
$\mathbb{P}_{\mathcal D}u$ is a nonnegative $L$-supersolution in $\mathcal B$ and it is an $L$-solution in $\mathcal B \setminus \overline{\mathcal D}$.
\item[(P2)]
$\mathbb{P}^{\perp}_{\mathcal D}u$ is a nonnegative $L$-supersolution in $\mathcal B$ and it is an $L$-solution in $\mathcal D$.
\item[(P3)]
If two functions $v$ and $u-v$ are nonnegative $L$-supersolutions in $B$ and $v$ is an $L$-solution in $\mathcal D$, then $\mathbb{P}^{\perp}_{\mathcal D}u-v$ is a nonnegative supersolution in $\mathcal B$.
\end{enumerate}
Indeed, (P1) and (P2) follow from Lemma \ref{prop0800sun} and Remark \ref{rmk1317tue}.
To see (P3), observe that $u-v \in  \mathscr{F}_{\mathcal{D}}$, and thus by Lemma \ref{prop0800sun}(d), $(u-v)-\mathbb{P}_{\mathcal D}u=\mathbb{P}^{\perp}_{\mathcal D}u-v$ is a nonnegative $L$-supersolution in $\mathcal B$.

Let $\mathscr{A}$ be the collection of all open subsets of $\mathcal B$.
For fixed $x \in \mathcal{B}$, we define $\alpha: \mathscr{A} \to [0,+\infty)$ by
\[
\alpha(\mathcal D):=\mathbb{P}_{\mathcal D}u(x),
\]
with the convention that $\mathbb{P}_{\emptyset}u=0$.
Lemma \ref{lem1513tue} implies that $\alpha$ satisfies the following conditions:
\begin{enumerate}[leftmargin=*]
\item[(i)]
The relation $A \subset B$ implies $\alpha(A) \le \alpha(B)$.
\item[(ii)]
For any  $A$ and $B$ in $\mathscr{A}$, $\alpha(A \cup B) \le \alpha(A)+\alpha(B)$.
\item[(iii)]
The relation $A \cap B=\emptyset$ implies $\alpha(A \cup B)=\alpha(A)+\alpha(B)$.
\item[(iv)]
For every $\epsilon > 0$ and every $A \in \mathscr{A}$, there exist a compact set $K \subset A$ and an open set $U\supset A$ such that, for every $B \in \mathscr{A}$ satisfying the relation $K \subset B \subset U$, one has $\abs{\alpha(B)-\alpha(A)} \le \epsilon$.
\end{enumerate}

By \cite[Theorem 5, p.54]{Bourbaki}, there exists a unique Boreal measure $\mu^x$ on $\mathcal{B}$ such that \eqref{eq0452fri} holds for every open set $\mathcal D \subset \mathcal B$.
\end{proof}

The following lemma is a combined restatement of Lemmas 15.2 -- 15.4 in \cite{Herve}.
Recall that we denote
\[
\mathbb{P}^{\perp}_{\mathcal{D}}u := u-\mathbb{P}_{\mathcal{D}}u.
\]

\begin{lemma}				\label{lem1513tue}
The following statements hold:
\begin{enumerate}[leftmargin=*]
\item
If $\mathcal D_1$ and $\mathcal D_2$ are open subsets of $\mathcal{B}$ such that $\mathcal D_1 \subset \mathcal D_2$, then $\mathbb{P}_{\mathcal D_2}u-\mathbb{P}_{\mathcal D_1}u=\mathbb{P}^{\perp}_{\mathcal D_1}u-\mathbb{P}^{\perp}_{\mathcal D_2}u$ is a nonnegative $L$-supersolutions in $\mathcal{B}$ and it is an $L$-solution in $\mathcal{B} \setminus (\overline{\mathcal D_2 \setminus \mathcal D_1})$.
\item
If $\mathcal D_1$ and $\mathcal D_2$ are open subsets of $\mathcal{B}$, then we have
\begin{equation}			\label{eq1806mon}
\mathbb{P}_{\mathcal D_1 \cup \mathcal D_2}u+\mathbb{P}_{\mathcal D_1 \cap \mathcal D_2}u = \mathbb{P}_{\mathcal D_1}u + \mathbb{P}_{\mathcal D_2}u.
\end{equation}
\item
Let $\mathcal D$ is an open subset of $\mathcal B$, and let $\set{\mathcal D_n}$ be an increasing sequence of open subsets of $\mathcal B$ such that $\bigcup_n \mathcal D_n =\mathcal D$.
Then we have
\[
\mathbb{P}_{\mathcal D}u=\lim_{n\to \infty} \mathbb{P}_{\mathcal D_n}u=\sup_n \mathbb{P}_{\mathcal D_n}u.
\]
\end{enumerate}
\end{lemma}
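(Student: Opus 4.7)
The plan is to prove the three parts in order, using the variational characterization of $\mathbb{P}_{\mathcal{D}}u$ from Lemma \ref{prop0800sun} and its associated class $\mathscr{F}_\mathcal{D}$ as the main tool, with parts (b) and (c) reducing (after some bookkeeping) to part (a).

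For (a), the key observation is that $\mathbb{P}_{\mathcal{D}_2}u$ itself lies in $\mathscr{F}_{\mathcal{D}_1}$: Lemma \ref{prop0800sun}(b) gives $\mathbb{P}_{\mathcal{D}_2}u-u\in\mathfrak{S}^+(\mathcal{D}_2)$, which restricts to $\mathcal{D}_1\subset\mathcal{D}_2$. Applying Lemma \ref{prop0800sun}(d) then immediately shows $\mathbb{P}_{\mathcal{D}_2}u-\mathbb{P}_{\mathcal{D}_1}u$ is a nonnegative $L$-supersolution on $\mathcal{B}$; the algebraic identity with $\mathbb{P}^{\perp}$ is trivial from the definition. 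For the $L$-solution assertion I cover the target set by two open pieces: on $\mathcal{B}\setminus\overline{\mathcal{D}_2}$ both $\mathbb{P}_{\mathcal{D}_1}u$ and $\mathbb{P}_{\mathcal{D}_2}u$ are $L$-solutions by Lemma \ref{prop0800sun}(c), and on $\mathcal{D}_1$ both $u-\mathbb{P}_{\mathcal{D}_1}u$ and $u-\mathbb{P}_{\mathcal{D}_2}u$ are $L$-solutions by Remark \ref{rmk1317tue} (using $\mathcal{D}_1\subset\mathcal{D}_2$), so the difference is an $L$-solution on the union $(\mathcal{B}\setminus\overline{\mathcal{D}_2})\cup\mathcal{D}_1$.

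For (b), I verify that $w:=\mathbb{P}_{\mathcal{D}_1}u+\mathbb{P}_{\mathcal{D}_2}u-\mathbb{P}_{\mathcal{D}_1\cap\mathcal{D}_2}u$ lies in $\mathscr{F}_{\mathcal{D}_1\cup\mathcal{D}_2}$, forcing $\mathbb{P}_{\mathcal{D}_1\cup\mathcal{D}_2}u\le w$; the reverse inequality follows by a symmetric argument showing $\mathbb{P}_{\mathcal{D}_1\cup\mathcal{D}_2}u+\mathbb{P}_{\mathcal{D}_1\cap\mathcal{D}_2}u$ bounds the right-hand side. Nonnegativity of $w$ and its global $L$-supersolution property both follow from part (a) applied to $\mathcal{D}_1\cap\mathcal{D}_2\subset\mathcal{D}_j$. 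The key step is to check $w-u\in\mathfrak{S}^+(\mathcal{D}_1\cup\mathcal{D}_2)$: I decompose $w-u=(\mathbb{P}_{\mathcal{D}_1}u-u)+(\mathbb{P}_{\mathcal{D}_2}u-\mathbb{P}_{\mathcal{D}_1\cap\mathcal{D}_2}u)$ and verify the property separately on $\mathcal{D}_1$ and on $\mathcal{D}_2\setminus\overline{\mathcal{D}_1}$ using the $L$-solution regions from Lemma \ref{prop0800sun}(c) and from part (a).

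For (c), monotonicity from (a) produces an increasing sequence $\mathbb{P}_{\mathcal{D}_n}u\le\mathbb{P}_{\mathcal{D}}u$, so $v:=\sup_n\mathbb{P}_{\mathcal{D}_n}u\le\mathbb{P}_{\mathcal{D}}u$. The reverse bound follows from $v\in\mathscr{F}_{\mathcal{D}}$: $v$ is lsc as a monotone sup of lsc functions, is bounded above by $u$ hence finite, satisfies the supermean inequality on balls $B\Subset\mathcal{B}$ by monotone convergence against $\omega^x_B$, and the decomposition $v-u=(v-\mathbb{P}_{\mathcal{D}_n}u)+(\mathbb{P}_{\mathcal{D}_n}u-u)$ combined with part (a) and Lemma \ref{prop0800sun}(b) shows $v-u\in\mathfrak{S}^+(\mathcal{D}_n)$ for every $n$, hence in $\mathfrak{S}^+(\mathcal{D})$ by locality since $\mathcal{D}=\bigcup_n\mathcal{D}_n$.

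The main technical obstacle is the precise description of the $L$-solution region in part (a): the natural open cover only yields the complement of $\overline{\mathcal{D}_2}\setminus\mathcal{D}_1$, whereas the stated region $\mathcal{B}\setminus\overline{\mathcal{D}_2\setminus\mathcal{D}_1}$ is a priori larger, with the gap consisting of points in $\partial\mathcal{D}_1\cap\partial\mathcal{D}_2$ that admit a neighborhood $V$ with $V\cap\mathcal{D}_2\subset\mathcal{D}_1$; a parallel subtlety arises across $\partial\mathcal{D}_1\cap\mathcal{D}_2$ in part (b). Handling these residual boundary points requires a local analysis on such $V$ (where $\mathcal{D}_1$ and $\mathcal{D}_2$ coincide) together with an extension-across-a-small-set observation for the $L$-supersolution/$L$-solution property — this is the nontrivial step that distinguishes the sharp form stated in Hervé from the straightforward open-cover argument.
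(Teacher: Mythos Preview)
Your outline for (a), (c), and the first half of (b) tracks the paper's argument, and you correctly isolate the real difficulty: the open cover $(\mathcal{B}\setminus\overline{\mathcal{D}_2})\cup\mathcal{D}_1$ misses the points of $\partial\mathcal{D}_2\setminus\overline{\mathcal{D}_2\setminus\mathcal{D}_1}$, so the sharp $L$-solution region in (a) does not follow from what you wrote. But you do not close this gap; ``an extension-across-a-small-set observation'' is not an argument, and no removable-singularity principle is available here since $\partial\mathcal{D}_2$ can be large. The paper's device is concrete and different from what you hint at. For a gap point $x_0$ one picks a neighborhood $\mathcal{D}\ni x_0$ with $\overline{\mathcal{D}}\cap(\mathcal{D}_2\setminus\mathcal{D}_1)=\emptyset$ and applies the projection \emph{once more to $v=\mathbb{P}_{\mathcal{D}_2}u-\mathbb{P}_{\mathcal{D}_1}u$ itself}, proving $\mathbb{P}_{\mathcal{D}}v=0$; then $v=\mathbb{P}^{\perp}_{\mathcal{D}}v$ is an $L$-solution on $\mathcal{D}$ by Remark~\ref{rmk1317tue}. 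The vanishing comes from writing $u=(\mathbb{P}^{\perp}_{\mathcal{D}_2}u+\mathbb{P}_{\mathcal{D}}v)+(\mathbb{P}_{\mathcal{D}_1}u+\mathbb{P}^{\perp}_{\mathcal{D}}v)$, checking that $\mathbb{P}^{\perp}_{\mathcal{D}_2}u+\mathbb{P}_{\mathcal{D}}v$ is an $L$-solution on all of $\mathcal{D}_2$ (the choice of $\mathcal{D}$ gives the open cover $\mathcal{D}_2=\mathcal{D}_1\cup(\mathcal{D}_2\setminus\overline{\mathcal{D}})$), and then invoking (P3) on $\mathcal{D}_2$ to get $-\mathbb{P}_{\mathcal{D}}v\ge 0$. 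This sharp (a) is precisely what drives the paper's proof of (b): there (P3) is used with the fact that $\mathbb{P}^{\perp}_{\mathcal{D}_1\cap\mathcal{D}_2}u-\mathbb{P}^{\perp}_{\mathcal{D}_1}u$ is an $L$-solution on $\mathcal{D}_2$, which holds because $\mathcal{D}_2\subset\mathcal{B}\setminus\overline{\mathcal{D}_1\setminus\mathcal{D}_2}$ but would fail with only the weak region $\mathcal{B}\setminus(\overline{\mathcal{D}_1}\setminus\mathcal{D}_2)$.

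A second, smaller gap: your arguments for (b) and (c) both invoke ``locality'' of the $L$-supersolution property (if $w\in\mathfrak{S}^+(\mathcal{D}_1)$ and $w\in\mathfrak{S}^+(\mathcal{D}_2)$ then $w\in\mathfrak{S}^+(\mathcal{D}_1\cup\mathcal{D}_2)$). This is true, but it is not immediate from Definition~\ref{def_supersol}(iv), since a test ball $B\Subset\mathcal{D}_1\cup\mathcal{D}_2$ need not lie in a single $\mathcal{D}_j$; a proof requires a strong-minimum-principle argument as in the pasting lemma. The paper sidesteps this entirely in (c) by working with the decreasing limit $w:=\lim_n\mathbb{P}^{\perp}_{\mathcal{D}_n}u$, showing $w$ is an $L$-\emph{solution} on $\mathcal{D}$ (which \emph{is} manifestly local by Lemma~\ref{lem_solution}), and then applying Lemma~\ref{prop0800sun}(d) directly to conclude $v\ge\mathbb{P}_{\mathcal{D}}u$.
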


\begin{proof}
\begin{enumerate}[leftmargin=*]
\item
Writing $u=\mathbb{P}_{\mathcal D_2}u+\mathbb{P}^{\perp}_{\mathcal D_2}u$ and noting  $\mathbb{P}^{\perp}_{\mathcal D_2}u$ is an $L$-solution in $\mathcal D_1$, we deduce from (P3) that $v:=\mathbb{P}^{\perp}_{\mathcal D_1}u-\mathbb{P}^{\perp}_{\mathcal D_2}u$ is a nonnegative $L$-supersolution in $\mathcal{B}$.
Since $\mathbb{P}^{\perp}_{\mathcal D_1}u-\mathbb{P}^{\perp}_{\mathcal D_2}u=\mathbb{P}_{\mathcal D_2}u-\mathbb{P}_{\mathcal D_1}u$, we find that $v$ is an $L$-solution in $\mathcal{B} \setminus \overline {\mathcal D_2}$ and also in $\mathcal D_1$.
Thus, the proof is complete if $\overline{\mathcal D_1} \subset \mathcal D_2$.

Otherwise, for $x_0 \in \partial \mathcal D_2 \setminus (\overline{\mathcal D_2\setminus \mathcal D_1})$, consider an open set $\mathcal{D}$ such that $x_0 \in \mathcal{D}$ and $\overline{\mathcal D} \cap (\mathcal D_2 \setminus \mathcal D_1)=\emptyset$.
The proof is complete if we show $\mathbb{P}_{D}v=0$, which implies $v$ is an $L$-solution in $\mathcal D$.

Note that $\mathbb{P}_{D}v$ is an $L$-solution in $\mathcal D_2$ as it is an $L$-solution in $\mathcal D_1$ and also in $\mathcal D_2 \setminus \mathcal D_1$.
Hence $\mathbb{P}^{\perp}_{\mathcal D_2}u+\mathbb{P}_{D}v$ is an $L$-solution in $D_2$.
On the other hand, since
\[
u=(\mathbb{P}^{\perp}_{\mathcal D_2}u+\mathbb{P}_{D}v)+(\mathbb{P}_{\mathcal D_2}u-\mathbb{P}_{D}v)=(\mathbb{P}^{\perp}_{\mathcal D_2}u+\mathbb{P}_{D}v)+(\mathbb{P}_{\mathcal D_1}u+\mathbb{P}^{\perp}_{D}v),
\]
and $\mathbb{P}_{\mathcal D_1}u+\mathbb{P}^{\perp}_{D}v$ is a nonnegative $L$-supersolution in $\mathcal{B}$, we deduce from (P3) that $\mathbb{P}^{\perp}_{\mathcal D_2}u-(\mathbb{P}^{\perp}_{\mathcal D_2}u+\mathbb{P}_{D}v)=-\mathbb{P}_{D}v \ge 0$ in $\mathcal{B}$.
Therefore, we conclude that $\mathbb{P}_{D}v=0$.

\item
It follows from (a) that $\mathbb{P}^{\perp}_{\mathcal D_1 \cap \mathcal D_2}u - \mathbb{P}^{\perp}_{\mathcal D_1}u$ is a nonnegative $L$-supersolution in $\mathcal{B}$ and it is an $L$-solution in $\mathcal D_2$.
Hence, $v:=\mathbb{P}^{\perp}_{\mathcal D_1 \cup \mathcal D_2}u +\mathbb{P}^{\perp}_{\mathcal D_1 \cap \mathcal D_2}u- \mathbb{P}^{\perp}_{\mathcal D_1}u$ is a nonnegative $L$-supersolution in $\mathcal{B}$ and it is an $L$-solution in $\mathcal D_2$.

Moreover, since
\[
u-v=\mathbb{P}_{\mathcal D_1 \cup \mathcal D_2}u - \mathbb{P}_{\mathcal D_1}u+ \mathbb{P}_{\mathcal D_1 \cap \mathcal D_2}u,
\]
$u-v$ is a nonnegative $L$-supersolution in $\mathcal B$.
Hence, it follows from (P3) that $\mathbb{P}^{\perp}_{\mathcal D_2}u-v$ is a nonnegative $L$-supersolution in $\mathcal{B}$.
In particular, we conclude that
\begin{equation}			\label{eq1731mon}
\mathbb{P}^{\perp}_{\mathcal D_1}u+\mathbb{P}^{\perp}_{\mathcal D_2}u\ge \mathbb{P}^{\perp}_{\mathcal D_1 \cup \mathcal D_2}u +\mathbb{P}^{\perp}_{\mathcal D_1 \cap \mathcal D_2}u.
\end{equation}

Similarly,  $w:=\mathbb{P}^{\perp}_{\mathcal D_1}u+\mathbb{P}^{\perp}_{\mathcal D_2}u-\mathbb{P}^{\perp}_{\mathcal D_1 \cap \mathcal D_2}u$ is a nonnegative $L$-supersolution in $\mathcal{B}$.
Note that $w$ is  an $L$-solution in $\mathcal D_1$, and also in $\mathcal D_2$.
Moreover, since
\[
u-w=\mathbb{P}_{\mathcal D_1}u + \mathbb{P}_{\mathcal D_1}u - \mathbb{P}_{\mathcal D_1 \cap \mathcal D_2}u,
\]
$u-w$ is a nonnegative $L$-supersolution in $\mathcal{B}$.
Again, we deduce from (P3) that $\mathbb{P}^{\perp}_{\mathcal D_1\cup \mathcal D_2}u-w \ge 0$  in $\mathcal{B}$.
This, along with \eqref{eq1731mon}, leads us to conclude that
\[
\mathbb{P}^{\perp}_{\mathcal D_1 \cup \mathcal D_2}u +\mathbb{P}^{\perp}_{\mathcal D_1 \cap \mathcal D_2}u = \mathbb{P}^{\perp}_{\mathcal D_1}u+\mathbb{P}^{\perp}_{\mathcal D_2}u.
\]
The proof is complete since the previous identity is equivalent to \eqref{eq1806mon}.

\item
It follows from (a) that $\set{\mathbb{P}_{\mathcal D_n}u}$ is an increasing sequence of nonnegative $L$-supersolutions in $\mathcal B$, and $\mathbb{P}_{\mathcal D_n}u \le \mathbb{P}_{\mathcal{D}}u$ for all $n$.
Therefore, $v:=\lim_{n\to \infty} \mathbb{P}_{\mathcal D_n}u$ exists and it is a nonnegative $L$-supersolution in $\mathcal{B}$.
Clearly, $v \le  \mathbb{P}_{\mathcal{D}}u$.

On the other hand, for any ball $B \Subset \mathcal{D}$, there is an integer $N$ such that  $\mathbb{P}^{\perp}_{\mathcal D_n}u$ is an $L$-solutions in $B$ for every $n \ge N$.
Since $\set{\mathbb{P}^{\perp}_{\mathcal D_n}u}_{n=N}^\infty$ is a decreasing sequence of $L$-solutions in $B$, its limit $w:=\lim_{n\to \infty} \mathbb{P}^{\perp}_{\mathcal D_n}u$ is an $L$-solution in $B$.
Since this holds for any $B\Subset \mathcal{D}$, we conclude that $w$ is an $L$-solution in $\mathcal{D}$.

Utilizing $v+w=u$ in $\mathcal D$, we infer from Lemma \ref{prop0800sun}(d) that $v-\mathbb{P}_{\mathcal D}u$ is a nonnegative $L$-supersolution in $\mathcal{B}$.
In particular, we have $v \ge  \mathbb{P}_{\mathcal D}u$.
Consequently, $v=\lim_{n\to \infty} \mathbb{P}_{\mathcal D_n}u=\mathbb{P}_{\mathcal{D}}u$. \qedhere
\end{enumerate}
\end{proof}

\begin{theorem}			\label{thm_capacity_measure}
Let $K$ be a compact subset of $\mathcal{B}$.
There exists a Borel measure $\mu=\mu_K$, referred to as the capacitary measure of $K$, supported in $\partial K$, such that
\[
\hat u_K(x)=\int_{K} G(x, y)\,d\mu(y), \quad \forall x \in \mathcal{B},
\]
where $G(x,y)$ is the Green's function for $L$ in $\mathcal{B}$.
We define the capacity of $K$ as
\[
\capacity(K)=\mu(K).
\]
\end{theorem}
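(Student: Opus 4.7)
Plan:

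The plan is to construct $\mu$ as the Riesz-type measure of the potential $\hat u_K$, using Proposition~\ref{prop0800tue} to produce a family $\{\mu^x\}_{x\in\mathcal{B}}$ of Radon measures, and then to extract the single measure $\mu$ by renormalizing via the Green function at a fixed reference point.

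From Lemma~\ref{lem1013sat}, $\hat u_K$ is a bounded ($0\le\hat u_K\le 1$) nonnegative $L$-supersolution in $\mathcal{B}$, identically $1$ on $\intr(K)$, $L$-harmonic on $\mathcal{B}\setminus\overline{K}$, and continuously zero on $\partial\mathcal{B}$. The standing assumption $c=0$ gives $L(1)=0$, so $\hat u_K$ is in fact an $L$-solution on the open set $\mathcal{B}\setminus\partial K$. Applying Proposition~\ref{prop0800tue} to $u=\hat u_K$ yields, for each $x\in\mathcal{B}$, a Radon measure $\mu^x$ on $\mathcal{B}$ with $\mu^x(\mathcal{D})=\mathbb{P}_\mathcal{D}\hat u_K(x)$ for every open $\mathcal{D}\subset\mathcal{B}$. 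Whenever $\mathcal{D}\cap\partial K=\emptyset$, $\hat u_K$ is an $L$-solution on $\mathcal{D}$, so $v\equiv 0$ is admissible in the infimum defining $\mathbb{P}_\mathcal{D}\hat u_K$ (since $v-\hat u_K=-\hat u_K$ is then an $L$-solution, hence a supersolution, on $\mathcal{D}$); combined with the nonnegativity in Lemma~\ref{prop0800sun}(a), this forces $\mathbb{P}_\mathcal{D}\hat u_K\equiv 0$, whence $\mu^x(\mathcal{D})=0$. By outer regularity, $\mu^x$ is supported on $\partial K$. Moreover, $\mu^x(\partial K)=\mathbb{P}_\mathcal{B}\hat u_K(x)=\hat u_K(x)$, the last equality following because $\hat u_K-\mathbb{P}_\mathcal{B}\hat u_K$ is a nonnegative $L$-solution in $\mathcal{B}$ (Remark~\ref{rmk1317tue}) that vanishes continuously on $\partial\mathcal{B}$ (both terms being potentials), hence identically zero by the comparison principle (Lemma~\ref{lem2334sun}(b)).

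To produce a single measure, fix a reference point $x_0\in\overline{\mathcal{B}'}\setminus\overline{K}$; we may assume $\partial K\subset\overline{\mathcal{B}'}$ by choosing $\mathcal{B}'$ large. By the lower bound in Theorem~\ref{thm_green_function}, $G(x_0,y)$ is bounded above and below by positive constants on $\partial K$. Define the positive Radon measure on $\partial K$ by
\[
d\mu(y):=\frac{d\mu^{x_0}(y)}{G(x_0,y)}.
\]
At $x=x_0$ the representation $\hat u_K(x_0)=\mu^{x_0}(\partial K)=\int G(x_0,y)\,d\mu(y)$ is immediate. For general $x\in\mathcal{B}$, the identity $\hat u_K(x)=\int G(x,y)\,d\mu(y)$ reduces to the Radon--Nikodym relation $d\mu^x/d\mu^{x_0}=G(x,\cdot)/G(x_0,\cdot)$ on $\partial K$; this uniqueness of the Riesz representation is the main obstacle. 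I would establish it by approximating $K$ from outside by a decreasing sequence of compact sets $K_n$ with $C^{1,1}$ boundary, for which the capacitary potential $\hat u_{K_n}$ and its associated measure are accessible via the Green function theory in $C^{1,1}$ domains developed in \cite{HK20, DK21} (the measure being expressible as a boundary surface measure weighted by the normal derivative of $\hat u_{K_n}$). Passing to the limit $n\to\infty$ using monotone convergence $\hat u_{K_n}\downarrow\hat u_K$, the uniform two-sided Green function bounds from Theorem~\ref{thm_green_function}, and weak-$\ast$ compactness of uniformly bounded Radon measures then yields the desired Green potential representation and identifies $\mu$ with the limit measure supported on $\partial K$.
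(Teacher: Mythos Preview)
Your setup through the definition of $\mu$ tracks the paper's proof closely: applying Proposition~\ref{prop0800tue} to $\hat u_K$ to obtain the family $\{\mu^x\}$ (the paper calls them $\nu^x$), showing support in $\partial K$ via $\mathbb{P}_\mathcal{D}\hat u_K\equiv 0$ whenever $\mathcal{D}\cap\partial K=\emptyset$, identifying the total mass with $\hat u_K(x)$, and then defining $d\mu=d\mu^{x_0}/G(x_0,\cdot)$ are all exactly what the paper does.

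The gap is in your final step, where you must show the Radon--Nikodym relation $d\mu^x/d\mu^{x_0}=G(x,\cdot)/G(x_0,\cdot)$ on $\partial K$. Your proposed route---approximating $K$ from outside by $C^{1,1}$ compacts $K_n$ and invoking a normal-derivative surface-measure representation from \cite{HK20, DK21}---does not go through as stated. Those references construct Green's functions in $C^{1,1}$ domains for the non-divergence operator but do \emph{not} supply the boundary regularity of $\hat u_{K_n}$ (or of $G$) needed to write the capacitary measure as a normal-derivative density; for operators in non-divergence form with merely Dini mean-oscillation coefficients this representation is not available and would itself be a substantial result. Without it, your weak-$\ast$ limit argument has no starting point, and even the preliminary claim $\hat u_{K_n}\downarrow\hat u_K$ would need its own proof in this framework.

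The paper avoids approximation entirely and instead identifies the Radon--Nikodym derivative by a blow-up argument (Lemma~\ref{lem40}): for $y\in\partial K$ with $\nu^{x_0}(B_r(y))>0$ for all small $r$, the normalized potential
\[
v_r(x)=\frac{\mathbb{P}_{B_r(y)}\hat u_K(x)}{\mathbb{P}_{B_r(y)}\hat u_K(x_0)}=\frac{\nu^x(B_r(y))}{\nu^{x_0}(B_r(y))}
\]
is a nonnegative $L$-solution in $\mathcal{B}\setminus\overline{B_r(y)}$ vanishing on $\partial\mathcal{B}$ and normalized by $v_r(x_0)=1$. Harnack plus compactness yields a subsequential limit $v$ that is a nonnegative $L$-solution in $\mathcal{B}\setminus\{y\}$, zero on $\partial\mathcal{B}$, with $v(x_0)=1$. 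A short maximum-principle/Harnack argument then shows any such $v$ must be a scalar multiple of $G(\cdot,y)$, forcing $v=G(\cdot,y)/G(x_0,y)$ and hence the desired derivative relation. This uniqueness-of-the-minimal-positive-solution step is the key idea you are missing.
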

\begin{proof}
By Lemma \ref{lem1013sat}, $\hat u_K$ is a potential in $\mathcal{B}$.
Hence, it follows from Proposition \ref{prop0800tue} that  there exists a family of measures $\set{\nu^x}_{x\in \mathcal{B}}$ such that for every open subset $\mathcal{D}$ of $\mathcal{B}$, the following holds:
\[
\mathbb{P}_{\mathcal D}\hat u_K(x)=\nu^x(\mathcal D).
\]

We first show that $\nu^x$ is supported in $\partial K$.
Let $y_0$ be an interior point of $K$.
Consider an open ball $B_r(y_0)$ such that $B_r(y_0) \Subset \intr(K)$.
By Lemma \ref{lem1013sat}, we find that $L \hat u_K= L1=0$ in $B_r(y_0)$.
Since $v=0$ satisfies the condition of Lemma \ref{prop0800sun}(d) with $u=\hat{u}_K$, we have $0-\mathbb{P}_{B_r(y_0)}\hat u_K \ge 0$.
This implies that $\mathbb{P}_{B_r(y_0)}\hat u_K=0$ since $\mathbb{P}_{B_r(y_0)}\hat u_K$ is nonnegative.
Similarly, if $y_0 \in \mathcal{B}\setminus K$, we choose an open ball $B_r(y_0)$ such that $B_r(y_0) \Subset \mathcal{B} \setminus K$. 
By Lemma \ref{lem1013sat}, we find that $L \hat u_K=0$ in $B_r(y_0)$, and thus, we arrive at the same conclusion that $\mathbb{P}_{B_r(y_0)}\hat u_K=0$.
Therefore, we infer that if $y_0 \not\in \partial K$, then there exists an open ball $B_r(y_0)$ such that $\nu^x(B_r(y_0))=0$.
This verifies that the support of $\nu^x$ lies on $\partial K$.

For $x_0$, $y \in \mathcal{B}$ with $x_0 \neq y$, consider a collections of balls $B_r(y)$ for $0<r<r_0$, where $r_0=\dist(y,\partial \mathcal{B})$.
By Lemma \ref{prop0800sun}, $\mathbb{P}_{B_r(y)}\hat{u}_{K}$ is a nonnegative $L$-supersolution the vanished on $\partial \mathcal{B}$.
Therefore, $\mathbb{P}_{B_r(y)}\hat{u}_{K}(x)>0$ for all $x \in \mathcal{B}$ or it is identically zero.
In the case when $\mathbb{P}_{B_r(y)}\hat{u}_{K} \not\equiv 0$, we define the function $v_r$ by
\[
v_r(x)=\frac{\mathbb{P}_{B_r(y)}\hat{u}_{K}(x)}{\mathbb{P}_{B_r(y)}\hat{u}_{K}(x_0)}.
\]

\begin{lemma}			\label{lem40}
Suppose $\nu^{x_0}(B_r(y))>0$ for all $r \in (0,r_0)$.
The family $\set{v_r}_{0<r<r_0}$ converges locally uniformly to $G(\,\cdot,y)/G(x_0,y)$ in $\mathcal{B}\setminus \set{x_0}$ as $r\to 0$.
\end{lemma}
\begin{proof}
As observed in Remark \ref{rmk1317tue}, $\mathbb{P}_{\mathcal D}\hat u_K$ is a potential in $\mathcal B$, implying that $v_r$ vanishes continuously on $\partial \mathcal{B}$.
Additionally, according to Lemma \ref{prop0800sun}, $v_r$ is an $L$-solution in $\mathcal{B}\setminus \overline B_r(y)$, and it is evident that $v_r(x_0)=1$.

Consider $\mathcal{B} \setminus \overline{B}_\epsilon(y)$ for fixed $\epsilon > 0$.
For $0<r<\epsilon$, each $v_r$ is an $L$-solution in $\mathcal{B} \setminus \overline B_\epsilon(y)$.
Since $v_r(x_0)=1$, the Harnack inequality (see \cite[Theorem 3.1]{Safonov}) ensures that $\set{v_r}_{0<r<\epsilon}$ is uniformly bounded in any compact set in $\mathcal{B} \setminus \overline B_\epsilon(y)$.
Therefore, by the $L^p$ theory (see \cite[Theorem 4.2]{Krylov21}), along with the standard diagonalization process, there exists a sequence $\set{v_{r_k}}_{k=1}^\infty$ with $r_k\to 0$ that converges weakly to a function $v$ in $W^{2,p_0}_{\rm loc}(\mathcal{B}\setminus \set{y})$.
It can be verified that $v$ satisfies the following properties:
\[
Lv =0\;\text{ in }\;\mathcal{B}\setminus \set{y},\quad v=0\;\text{ on }\;\partial\mathcal{B},\quad v(x_0)=1.
\]

We will show that $v=kG(\,\cdot, y)$, where $k$ is a nonnegative number.
Define  $k$ as the supremum of the set
\[
\set{ \alpha \in \mathbb{R}: v(x)-\alpha G(x,y) \ge 0\;\text{ for all }\;x\in \mathcal{B},\;x\neq y}.
\]
Since $v(x) \ge 0$ for all $x\neq y$, we deduce that $k \ge 0$.
Furthermore, from the pointwise estimate of $G(x,y)$ as given in Theorem \ref{thm_green_function}, we conclude that $k<+\infty$.
Therefore, $k$ is a nonnegative real number, and thus, we have $u-kG(\,\cdot,y) \ge 0$ in $\mathcal{B}\setminus \set{y}$.

Next, we will show that for any $\epsilon>0$, we have $u-(k+\epsilon)G(\,\cdot,y) \le 0$ in $\mathcal{B}\setminus \set{y}$.
Taking the limit as $\epsilon\to 0$, we conclude that $u=kG(\,\cdot,y)$.

Suppose to the contrary that there exists $x_1 \neq y$ such that
$u(x_1)-(k+\epsilon)G(x_1,y) >0$.
Note that $u-(k+\epsilon)G(\,\cdot,y)$ is an $L$-solution in $\mathcal{B}\setminus \set{y}$, and it vanishes on $\partial \mathcal{B}$.
Let $r_0$ be the minimum of $\abs{x_1-y}$ and  $\tfrac12\dist(y,\partial\mathcal{B})$.
The maximum principle implies that for any $r \in (0,r_0)$, we have
\begin{equation}			\label{eq0903mon}
0<\sup_{\partial B_r(y)} \left(u-(k+\epsilon)G(\,\cdot, y)\right) \le \sup_{\partial B_r(y)} \left(u-kG(\,\cdot, y)\right) -\inf_{\partial B_r(y)} \epsilon G(\,\cdot, y).
\end{equation}

We note that both $u-kG(\,\cdot, y)$ and $\epsilon G(\,\cdot, y)$ are nonnegative $L$-solutions in $\mathcal{B}\setminus \set{y}$.
Moreover, any two points on $\partial B_{r}(y)$ can be connected by a chain of $n$ balls with radius $r/2$ contained in $B_{2r}(y)\setminus B_{r/2}(y)$, where $n$ does not exceed a number $n_0=n_0(d)$.
Hence, applying the Harnack inequality as presented in \cite[Theorem 3.1]{Safonov} successively to $u-kG(\,\cdot, y)$, we derive
\begin{equation}			\label{eq0904mon}
\sup_{\partial B_r(y)} \left(u-kG(\,\cdot, y)\right) \le N\inf_{\partial B_r(y)} \left(u-kG(\,\cdot, y)\right),
\end{equation}
where $N$ is a constant depending only on $d$, $\lambda$, and $\Lambda$.
The same inequality holds for $\epsilon G(\,\cdot,y)$.
Therefore, we deduce from the inequalities \eqref{eq0903mon} and \eqref{eq0904mon} that
\[
0<N \inf_{\partial B_r(y)} \left(u-kG(\,\cdot, y)\right) - N^{-1} \sup_{\partial B_r(y)}  \epsilon G(\,\cdot, y),\quad \forall r \in (0, r_0).
\]
Consequently, we conclude that for any $x \in B_{r_0}(y)\setminus \set{y}$, we have
\[
0 \le u(x)-(k+\epsilon/N^2)G(x,y).
\]
As $u-(k+\epsilon/N^2)G(\,\cdot,y)=0$ on $\partial\mathcal{B}$, this inequality remains valid for all $x\in \mathcal{B}$ with $x \neq y$ by the maximum principle.
This is contradicts to the definition of $k$.

We have verified that $v=kG(\cdot, y)$.
Since $v(x_0)=1$, we conclude that $k=1/G(x_0,y)$.
Thus, the proof is complete
\end{proof}

By Lemma \ref{lem40}, we deduce that if $\nu^{x_0}(B_r(y))>0$ for all $r \in (0,r_0)$, then we have
\[
\lim_{r\to 0} v_r(x)=\lim_{r\to 0}\frac{\mathbb{P}_{B_r(y)}\hat{u}_{K}(x)}{\mathbb{P}_{B_r(y)}\hat{u}_{K}(x_0)}=\frac{G(x,y)}{G(x_0,y)},\quad \forall x \neq x_0,
\]
The above remains true for $x=x_0$, and thus we obtain
\[
\lim_{r\to 0}\frac{\nu^x(B_r(y))/G(x,y)}{\nu^{x_0}(B_r(y))/G(x_0,y)}=1,
\]
provided that $\nu^{x_0}(B_r(y)) >0$ for all $r \in (0, r_0)$.
Therefore, if we define a family of measures $\set{\mu^x}_{x\in \mathcal{B}}$ by
\[
d\mu^x(y)=\frac{1}{G(x,y)}d\nu^x(y),
\]
then $\mu^x$ is well-defined and $\mu^x=\mu^{x_0}$.
In other words, there exists a measure $\mu$ such that
\[
d\mu(y)=\frac{1}{G(x,y)}d\nu^x(y),\quad \forall x \in \mathcal{B}.
\]
Since $\nu^x(\mathcal B)=\mathbb{P}_{\mathcal{B}}\hat{u}_{K}(x)=\hat{u}_{K}(x)$, we have
\[
\hat{u}_K(x)=\nu^x(\mathcal B)=\int_{\mathcal B} 1 \,d\nu^x(y)=\int_{\mathcal B} G(x,y)\,d\mu(y).
\]

Finally, it is clear that $\mu$ is supported in $\partial K$.
This concludes the proof of the theorem.
\end{proof}

We note that the capacitary measure $\mu$ defined in the previous theorem differs from the one introduced in \cite[Theorem 1.9]{Bauman85}.
Our definition aligns with the "equilibrium measure" introduced in \cite{GW82}, particularly when $L=\Delta$, the Laplace operator.
For further discussion, refer to Remark \ref{rmk0918thu} below.

\begin{remark}	\label{rmk0918thu}
Consider the case when $L=\Delta$, the Laplace operator.
Let $K$ be a compact set in $\mathcal{B}$.
Define $\tilde{u}_K$ as the minimizer of the following problem:
\[
\inf_{v \in \mathscr{A}_K} \int_\mathcal{B} \,\abs{\nabla v}^2\,dx,
\]
where $\mathscr{A}_K$ denotes the set of functions $v \in W^{1,2}_0(\mathcal{B})$ satisfying $v \ge 1$ on $K$.
It is well known that there exists a measure $\nu$ supported in $K$ such that
\begin{equation}			\label{eq1152tue}
\tilde u_K(x)=\int_K G(x,y)\,d\nu(y).
\end{equation}
Moreover, $\tilde{u}_K$ satisfies the property:
\[
\int_{\mathcal{B}} \nabla \tilde u_K \cdot \nabla \phi\,dx= \int_K \phi \,d \nu,\quad \forall \phi \in C^\infty_c(\mathcal{B}).
\]
These results can be found, for instance, in \cite[p. 322]{GW82}.
In particular, we have
\begin{equation}			\label{eq2029tue}
\int_{\mathcal{B}} \abs{\nabla \tilde u_K}^2\,dx =  \int_K \tilde u_K d\nu.
\end{equation}
By symmetry of the Green's function of the Laplace operator, Fubini's theorem, Theorem \ref{thm_capacity_measure}, and \eqref{eq1152tue}, we obtain:
\begin{equation}			\label{eq2030tue}
\int_K \hat u_K\, d\nu =\int_K \int_K G(x,y)\,d\mu(y) d\nu(x)=\int_K \int_K G(x,y)\,d\nu(y)d\mu(x)= \int_K \tilde u_K\, d\mu.
\end{equation}
It is well known that if $\partial K$ possesses certain smoothness properties such as the uniform cone condition, then $\hat{u}_K=\tilde{u}_K=1$ on $K$.
Consequently, from \eqref{eq2029tue}, \eqref{eq2030tue}, and Theorem \ref{thm_capacity_measure}, we deduce:
\[
\int_{\mathcal{B}} \abs{\nabla \tilde u_K}^2\,dx=\nu(K)=\mu(K).
\]
This confirms the equivalence between our definition of capacity and the traditional notion.
\end{remark}

The definition in Theorem \ref{thm_capacity_measure} makes it clear that the notion of capacity is associated with a specific operator $L$.
Theorem \ref{thm_equivalence} below asserts that the capacity associated with $L$ is comparable to the capacity associated with the Laplace operator provided that Conditions \ref{cond1} and \ref{cond2} hold, with $c = 0$.
We need the following lemmas for the proof of Theorem \ref{thm_equivalence}.

\begin{lemma}		\label{lem2020sat}
Let $K$ be a compact subset of $\mathcal{B}$ and let $\mathfrak{M}_K$ be the set of all Borel measures $\nu$ supported in $K$ satisfying $\int_K G(x,y)\,d\nu(y) \le 1$ for every $x \in \mathcal{B}$.
Then
\[
\hat u_{K}(x)=\sup_{\nu \in \mathfrak{M}_K} \int_K G(x,y)\,d\nu(y).
\]
\end{lemma}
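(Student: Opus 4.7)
The approach is to prove the two inequalities separately, using the representation of $\hat u_K$ as a Green potential from Theorem~\ref{thm_capacity_measure} for one direction, and a comparison-principle argument in $\mathcal{B}\setminus K$ for the other.

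For the inequality $\hat u_K(x) \le \sup_{\nu}\int G(x,y)\,d\nu(y)$, I use that $\hat u_K$ is itself the potential of the capacitary measure $\mu$ by Theorem~\ref{thm_capacity_measure}. Since $\hat u_K \le 1$ on $\mathcal{B}$ by Lemma~\ref{lem1013sat}(a), the measure $\mu$ belongs to $\mathfrak{M}_K$, and the supremum is realized (at least) by $\mu$.

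For the reverse inequality, I fix $\nu \in \mathfrak{M}_K$ and set $p_\nu(x):=\int_K G(x,y)\,d\nu(y)$. The plan is to show $p_\nu \le v$ for any competitor $v$ in the definition of $u_K$, then pass to $\hat u_K$ via lower semicontinuity. The steps are: (i) Observe via Fubini and the supersolution property of $G(\cdot,y)$ that $p_\nu$ is a nonnegative lower semicontinuous $L$-supersolution in $\mathcal{B}$ and an $L$-solution in $\mathcal{B}\setminus K$ (since $\operatorname{supp}\nu \subset K$); (ii) For $v\in\mathfrak{S}^+(\mathcal{B})$ with $v\ge 0$ in $\mathcal{B}$ and $v\ge 1$ on $K$, the difference $w := v - p_\nu$ is an $L$-supersolution on $\mathcal{B}\setminus K$ (supersolution minus solution); (iii) Verify the boundary condition of Lemma~\ref{lem2334sun}(b) on $\partial(\mathcal{B}\setminus K) = \partial\mathcal{B} \cup \partial K$. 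On $\partial\mathcal{B}$, $v\ge 0$ and $p_\nu(x)\to 0$ because $G(\cdot,y)$ vanishes continuously on $\partial\mathcal{B}$ (using the construction $G=G_0+u^y$ in Theorem~\ref{main_Green_const} and dominated convergence); on $\partial K$, lower semicontinuity of $v$ plus $v\ge 1$ on $K$ gives $\liminf v(y) \ge 1$, while the hypothesis $p_\nu \le 1$ on all of $\mathcal{B}$ gives $\limsup p_\nu(y) \le 1$, hence $\liminf w \ge 0$; (iv) Apply the comparison principle to get $p_\nu \le v$ on $\mathcal{B}\setminus K$, and combine with $p_\nu \le 1 \le v$ on $K$ to conclude $p_\nu \le v$ on $\mathcal{B}$.

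Finally, infimizing over admissible $v$ yields $p_\nu \le u_K$, and since $p_\nu$ is lower semicontinuous, Lemma~\ref{lem1655sun}(c) upgrades this to $p_\nu \le \hat u_K$. Taking the supremum over $\nu \in \mathfrak{M}_K$ completes the proof.

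I expect the main obstacle to be Step (iii), specifically the boundary analysis at $\partial K$: one needs to exploit lower semicontinuity of $v$ (from $v \in \mathfrak{S}^+$) and the \emph{global} pointwise bound $p_\nu \le 1$ on $\mathcal{B}$ (not merely on $K$) to control the relevant $\liminf$. This is where the definition of $\mathfrak{M}_K$ really does its work; without the bound extending off $K$, the comparison argument would not close. The other potential subtlety — that $p_\nu$ genuinely vanishes on $\partial\mathcal{B}$ — is handled using the explicit construction of $G$ in Theorem~\ref{main_Green_const} together with the smoothness of the ball.
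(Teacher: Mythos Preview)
Your proposal is correct and follows essentially the same route as the paper: one inequality via the capacitary measure of Theorem~\ref{thm_capacity_measure}, the other by comparing $p_\nu$ against competitors $v$ on $\mathcal{B}\setminus K$ using that $p_\nu$ is an $L$-solution there and $p_\nu\le 1\le v$ on $K$. The only cosmetic difference is in the final passage from $p_\nu\le u_K$ to $p_\nu\le \hat u_K$: the paper unwinds Definition~\ref{def2249sat} and applies Fatou's lemma directly to $\int_K G(z,y)\,d\nu(y)$, whereas you invoke the lower semicontinuity of $p_\nu$ together with Lemma~\ref{lem1655sun}(c); both arguments amount to the same Fatou step.
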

\begin{proof}
Let  $g_\nu(x):=\int_K G(x,y)\,d\nu(y)$.
By Theorem \ref{thm_capacity_measure}, we have
\[
\hat{u}_K(x) \le \sup_{\nu \in \mathfrak{M}_K} g_{\nu}(x)=\sup_{\nu \in \mathfrak{M}_K} \int_K G(x,y)\,d\nu(y).
\]
Next, let $v$ be any function from $\mathfrak{S}^+(\mathcal{B})$ satisfying $v\ge 0$ in $\mathcal{B}$ and $v \ge 1$ in $K$.
Then, for any $\nu \in \mathfrak{M}_K$, we have $g_\nu \le 1 \le v$ in $K$.
In particular, we have $v \geq 1 \geq g_{\nu}$ on $\partial K$ and this implies that $v \geq g_{\nu}$ on $\partial(\mathcal{B}\setminus K)$.
Since $v$ is $L$-supersolution in $\mathcal{B}$ and $g_{\nu}$ is $L$-solution in $\mathcal{B}\setminus K$, the comparison principle shows that $v \ge g_{\nu}$ in $\mathcal{B} \setminus K$.
Hence, it follows that $v \ge g_{\nu}$ in $\mathcal{B}$.
By taking the infimum over $v$, we deduce that
\[
u_K(x) \ge g_{\nu}(x)=\int_K G(x,y)\,d\nu(y),\quad \forall x \in \mathcal{B}.
\]
Moreover, we have
\begin{align*}
\hat{u}_K (x) &= \sup_{r> 0} \left(\inf_{z \in B_r(x) \cap \mathcal{B}}u_K(z) \right) \geq \lim_{r \to 0} \bigg(\inf_{z \in B_r(x)\cap \mathcal{B}}\int_{K}G(z,y)\,d\nu(y)\bigg)\\ &\geq \int_K\liminf_{z \to x}G(z,y)\,d\nu(y) =\int_K G(x,y)\,d\nu(y),
\end{align*}
where we used Fatou's lemma.
Taking the supremum over $\nu\in \mathfrak{M}_K$, the proof is complete.
\end{proof}

\begin{lemma}				\label{lem0956fri}
For any compact set $K \subset \mathcal{B}''$ and any point $X^o \in \mathcal{B}\setminus \mathcal{B}'$, we have
\[
N^{-1}  \dist(\mathcal{B}'', X^o)^{d-2} \hat u_K(X^o) \le \capacity(K)  \le N (\diam \mathcal{B})^{d-2}  \hat u_K(X^o),
\]
where $N$ is the constant in Theorem \ref{thm_green_function}.
\end{lemma}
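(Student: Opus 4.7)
The plan is to derive both bounds from the integral representation of $\hat u_K$ established in Theorem \ref{thm_capacity_measure}, coupled with the two-sided Green's function estimates of Theorem \ref{thm_green_function}. Precisely, Theorem \ref{thm_capacity_measure} provides a measure $\mu=\mu_K$ supported in $\partial K\subset \overline{\mathcal{B}''}$ with $\capacity(K)=\mu(K)$ and
\[
\hat u_K(X^o)=\int_K G(X^o,y)\,d\mu(y).
\]

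Next, I would exploit the geometry. Since every $y\in K$ lies in $\overline{\mathcal{B}''}$ while $X^o\in \mathcal{B}\setminus \mathcal{B}'$, for such $y$ one has
\[
\dist(\mathcal{B}'',X^o)\le \abs{X^o-y}\le \diam\mathcal{B}.
\]
Applying the pointwise estimates $N^{-1}\abs{X^o-y}^{2-d}\le G(X^o,y)\le N\abs{X^o-y}^{2-d}$ from Theorem \ref{thm_green_function} (whose upper half holds globally on $\mathcal{B}$ by \eqref{1553thu} and the remark following the theorem) then yields
\[
N^{-1}(\diam\mathcal{B})^{2-d}\le G(X^o,y)\le N\,\dist(\mathcal{B}'',X^o)^{2-d}
\]
uniformly for $y\in K$. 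Integrating against $\mu$ produces
\[
N^{-1}(\diam\mathcal{B})^{2-d}\capacity(K)\le \hat u_K(X^o)\le N\,\dist(\mathcal{B}'',X^o)^{2-d}\capacity(K),
\]
and a trivial rearrangement gives both inequalities of the lemma.

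The only technical subtlety is that Theorem \ref{thm_green_function} records the lower bound only for $x,y\in \overline{\mathcal{B}'}$, whereas here $X^o$ need not lie in $\overline{\mathcal{B}'}$. Since $X^o$ and $K$ are at a fixed positive distance from each other and from $\partial\mathcal{B}$, this is handled by a short Harnack-chain argument applied to the nonnegative $L$-solution $G(\cdot,y)$ on $\mathcal{B}\setminus\overline{B}_\delta(y)$, with $\delta$ depending only on the fixed geometry of $\mathcal{B}''\Subset\mathcal{B}'\Subset\mathcal{B}$: starting from a reference point on $\partial \mathcal{B}'$, where Theorem \ref{thm_green_function} directly delivers the lower bound $N^{-1}(\diam\mathcal{B})^{2-d}$, one transfers the estimate to $X^o$ through a bounded number of Harnack balls, absorbing the resulting multiplicative constant into the same $N$. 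This is the main step that requires care, but it is otherwise routine once the representation via $\mu$ is in place.
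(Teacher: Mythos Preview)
Your proposal is correct and mirrors the paper's proof almost exactly: the paper also invokes the representation $\hat u_K(X^o)=\int_K G(X^o,y)\,d\mu(y)$ from Theorem~\ref{thm_capacity_measure}, bounds $\abs{X^o-y}$ between $\dist(\mathcal{B}'',X^o)$ and $\diam\mathcal{B}$, and appeals directly to Theorem~\ref{thm_green_function}. The paper does not address the subtlety you raise about the lower bound when $X^o\notin\overline{\mathcal{B}'}$; in its applications only $X^o\in\partial\mathcal{B}'\subset\overline{\mathcal{B}'}$ is used, where Theorem~\ref{thm_green_function} applies with the stated constant $N$, so your extra Harnack-chain step is more care than the paper itself takes.
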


\begin{proof}
By Theorem \ref{thm_capacity_measure}, we have
\begin{equation*}			
\hat u_K(X^o)=\int_{K}G(X^o, y)\,d\mu(y),
\end{equation*}
where we have used the fact that the capacitary measure $\mu$ is supported in $K$.
The lemma follows immediately from the two-sided estimates for the Green's function in Theorem \ref{thm_green_function}, since for $y \in K$, we have $\dist(\mathcal{B}'', X^o) \le \abs{X^o-y} \le \diam \mathcal{B}$.
\end{proof}

\begin{theorem}			\label{thm_equivalence}
Let $K$ be any compact subset of $\mathcal{B}''$.
Denote by $\capacity^0(K)$ the capacity of $K$ associated with the Laplace operator.
There exists a constant $N>0$, depending only on $d$, $\lambda$, $\Lambda$, $\omega_A$, $\diam(\mathcal{B})$, $\dist(\mathcal{B}'' ,\partial \mathcal{B}')$, and $\dist(\mathcal{B}' ,\partial \mathcal{B})$, such that
\[
N^{-1} \capacity^0(K) \le \capacity(K) \le  N \capacity^0(K).
\]
\end{theorem}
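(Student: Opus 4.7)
The strategy is to reduce the comparison to a single-point estimate and then to transfer between $L$-potentials and $\Delta$-potentials on a shell where both Green's functions are pointwise comparable. Let $G^0$ denote the Green's function of the Laplacian on $\mathcal{B}$ with zero Dirichlet data, and let $\hat u_K^0$, $\mathfrak{M}_K^0$ denote the $\Delta$-analogues of $\hat u_K$ and $\mathfrak{M}_K$. Classical estimates for $G^0$ together with Theorem \ref{thm_green_function} furnish the key comparability
\[
G(x,y)\simeq G^0(x,y)\simeq \abs{x-y}^{2-d}\quad\text{for all}\quad x\neq y\in\overline{\mathcal{B}'}.
\]

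First I would fix a point $X^o$ placed so that Lemma \ref{lem0956fri} applies for both operators; the Laplacian trivially satisfies Conditions \ref{cond1} and \ref{cond2}, so the lemma yields
\[
\capacity(K)\simeq \hat u_K(X^o)\qquad\text{and}\qquad \capacity^0(K)\simeq \hat u_K^0(X^o).
\]
Thus the theorem reduces to establishing $\hat u_K(X^o)\simeq \hat u_K^0(X^o)$.

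To prove $\hat u_K(X^o)\gtrsim \hat u_K^0(X^o)$, I would take any $\nu\in\mathfrak{M}_K^0$ and analyze the $L$-potential $v(x):=\int_K G(x,y)\,d\nu(y)$. By construction $v$ is a nonnegative $L$-supersolution in $\mathcal{B}$ that vanishes continuously on $\partial\mathcal{B}$ and is an $L$-solution in $\mathcal{B}\setminus K$. Combining $G\lesssim G^0$ on $\overline{\mathcal{B}'}\times\overline{\mathcal{B}'}$ with $\int G^0\,d\nu\le 1$ gives $v\le C$ on $\overline{\mathcal{B}'}$. Since $v$ is $L$-harmonic in $\mathcal{B}\setminus\overline{\mathcal{B}'}$ with boundary values at most $C$ on $\partial\mathcal{B}'$ and zero on $\partial\mathcal{B}$, the comparison principle in Lemma \ref{lem2334sun} propagates $v\le C$ to all of $\mathcal{B}$. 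Hence $C^{-1}\nu\in\mathfrak{M}_K$, and Lemma \ref{lem2020sat} together with the lower bound $G(X^o,y)\gtrsim G^0(X^o,y)$ yields
\[
\hat u_K(X^o)\ge \tfrac{1}{C}\int_K G(X^o,y)\,d\nu(y)\gtrsim \int_K G^0(X^o,y)\,d\nu(y).
\]
Taking the supremum over $\nu\in\mathfrak{M}_K^0$ and invoking the $\Delta$-version of Lemma \ref{lem2020sat} delivers $\hat u_K(X^o)\gtrsim \hat u_K^0(X^o)$. The reverse inequality is symmetric: one analyzes $x\mapsto \int_K G^0(x,y)\,d\nu(y)$ for $\nu\in\mathfrak{M}_K$ and uses the maximum principle for $\Delta$-harmonic functions in $\mathcal{B}\setminus\overline{\mathcal{B}'}$.

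The main obstacle is precisely this bridging step. The admissibility condition defining $\mathfrak{M}_K$ and $\mathfrak{M}_K^0$ is pointwise on all of $\mathcal{B}$, whereas the comparability $G\simeq G^0$ is only known on $\overline{\mathcal{B}'}\times\overline{\mathcal{B}'}$, so a naive pointwise substitution fails near $\partial\mathcal{B}$. The resolution is to apply the maximum principle in the annular region $\mathcal{B}\setminus\overline{\mathcal{B}'}$, exploiting the fact that each potential is the respective operator's harmonic extension there and vanishes on $\partial\mathcal{B}$; this upgrades the uniform bound obtained on the interior shell to a global bound and closes the loop.
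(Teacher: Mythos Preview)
Your proposal is correct and follows essentially the same approach as the paper: reduce to the single-point comparison $\hat u_K(X^o)\simeq \hat u_K^0(X^o)$ via Lemma~\ref{lem0956fri}, rescale an admissible measure for one operator into an admissible measure for the other using the Green's function comparability on $\overline{\mathcal{B}'}$, and invoke Lemma~\ref{lem2020sat}. The only cosmetic difference is that the paper applies the comparison principle on $\mathcal{B}\setminus K$ (using the bound on $\partial K$) whereas you apply it on the annulus $\mathcal{B}\setminus\overline{\mathcal{B}'}$; both choices accomplish the same extension of the interior bound to all of $\mathcal{B}$.
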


\begin{proof}
Let $G^0(x,y)$ be Green's function for the Laplace operator in $\mathcal{B}$, and denote by $\hat u_K^0$ and $\mu^0$, respectively, the capacitary potential of $K$ and the capacity of $K$ for the Laplace operator.

By Theorem \ref{thm_green_function}, there is a constant $\alpha>0$ such that
\begin{equation}		\label{greens}
\alpha G^0(x,y) \le G(x,y) \le \alpha^{-1} G^0(x,y), \quad \forall x \neq y \in \overline{\mathcal{B}'}.
\end{equation}
Let $\mu$ be any Borel measure supported in $K$ satisfying
\begin{equation}		\label{eq1844thu}
\int_K G^0(x, y)\, d\mu(y) \leq 1,\quad \forall x \in \mathcal{B}.
\end{equation}
Setting $\tilde{\mu}:= \alpha{\mu}$, and utilizing the inequalities \eqref{greens}, we obtain
\begin{equation}			\label{eq1747thu}
\int_{K} G(x, y)\,d\tilde\mu(y) \le \int_K G^0(x,y)\, d\mu(y) \le 1,\quad \forall x \in \overline{\mathcal{B}'}.
\end{equation}
Note that $u(x):=\int_K G(x, y)\, d\tilde\mu(y)$ satisfies $L u=0$ in $\mathcal{B} \setminus K$.
Since $u=0$ on $\partial \mathcal{B}$ and $u \le 1$ on $\partial K$, it follows from the comparison principle that $u \le 1$ in $\mathcal{B} \setminus K$.
This combined with \eqref{eq1747thu} imply that
\[
\int_K G(x,y) d \tilde\mu(y) \le 1,\quad \forall x \in \mathcal{B}.
\]
Therefore, by Lemma~\ref{lem2020sat}, we obtain
\[
\int_K G(x,y) d \tilde\mu(y) \le \hat u_K(x),\quad \forall x \in \mathcal{B}.
\]
Since the measure $\mu$ is supported in $K \subset \mathcal{B}'$, it follows from the inequalities \eqref{greens} that for $x \in \overline{\mathcal{B}'}$, we have
\[
\int_{K} G^0(x, y)\,d\mu(y) \leq \alpha^{-1} \int_K G(x,y)\, d\mu(y) = \alpha^{-2} \int_K G(x ,y)\,d\tilde{\mu}(y) \le \alpha^{-2}  \hat u_K(x).
\]
By taking the supremum over $\mu$ satisfying \eqref{eq1844thu} and by using lemma \ref{lem2020sat} again, we deduce
\[
\hat u^0_K(x) \le \alpha^{-2} \hat u_K(x) \le \alpha^{-2} u_K(x),\quad \forall x \in \overline{\mathcal{B}'}.
\]
By taking $X^o \in \partial\mathcal{B}'$, and using Lemma \ref{lem0956fri}, we deduce that  $\capacity^0(K) \le N \capacity(K)$ for some constant $N$.
By interchanging the role of $G^0(x,y)$ and $G(x,y)$, we can also conclude that $\capacity(K) \le N \capacity^0(K)$.
\end{proof}

\begin{lemma}		\label{lem1650thu}
Let $N$ be the constant in Theorem \ref{thm_green_function}.
We have
\[
\capacity(K) \le 3^{d-2} N  r^{d-2}
\]
whenever $K\subset \overline B_r(x_0)$ and $B_{2r}(x_0) \subset \mathcal{B}'$.
\end{lemma}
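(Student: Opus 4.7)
The plan is to combine the integral representation of the capacitary potential (Theorem~\ref{thm_capacity_measure}) with the lower bound on the Green's function (Theorem~\ref{thm_green_function}), evaluating $\hat{u}_K$ at a conveniently chosen point $x^\ast$ that is close enough to $K$ to exploit the lower bound, yet lies outside of $K$ so that $G(x^\ast, \cdot)$ is finite on $K$.

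First I would invoke Theorem~\ref{thm_capacity_measure} to write
\[
\hat{u}_K(x) = \int_K G(x,y)\, d\mu(y), \qquad \mu(K) = \capacity(K),
\]
with $\mu$ supported in $\partial K \subset \overline{B_r(x_0)}$. Next, I would select any point $x^\ast \in \partial B_{2r}(x_0)$. Since $B_{2r}(x_0) \subset \mathcal{B}'$, both $x^\ast$ and every $y \in K$ lie in $\overline{\mathcal{B}'}$, so Theorem~\ref{thm_green_function} applies. For $y \in K \subset \overline{B_r(x_0)}$ and $x^\ast \in \partial B_{2r}(x_0)$ the triangle inequality gives
\[
r \le |x^\ast - y| \le 3r,
\]
in particular $x^\ast \neq y$ and $G(x^\ast, y) \ge N^{-1} |x^\ast - y|^{2-d} \ge N^{-1} (3r)^{2-d}$.

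Integrating against $\mu$ yields
\[
\hat{u}_K(x^\ast) = \int_K G(x^\ast, y)\, d\mu(y) \ge N^{-1} (3r)^{2-d}\, \mu(K) = N^{-1} 3^{2-d} r^{2-d}\, \capacity(K).
\]
Since $\hat{u}_K \le 1$ in $\mathcal{B}$ by Lemma~\ref{lem1013sat}(a), rearranging gives $\capacity(K) \le 3^{d-2} N\, r^{d-2}$.

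There is essentially no obstacle: the only minor issue to verify is that $x^\ast \in \overline{\mathcal{B}'}$ so that the two-sided Green's function estimate is available, which is immediate from the hypothesis $B_{2r}(x_0) \subset \mathcal{B}'$. The factor $3^{d-2}$ arises from the (worst-case) diameter estimate $|x^\ast - y| \le 3r$; choosing $x^\ast$ further from $K$ would only worsen the constant, while choosing it closer risks $x^\ast \in K$ and loss of control over the integrand.
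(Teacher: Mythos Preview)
Your proof is correct and follows essentially the same approach as the paper: evaluate the representation $\hat u_K(x)=\int_K G(x,y)\,d\mu(y)$ at a point $x^\ast\in\partial B_{2r}(x_0)$, use the lower bound $G(x^\ast,y)\ge N^{-1}(3r)^{2-d}$ from Theorem~\ref{thm_green_function}, and combine with $\hat u_K\le 1$ from Lemma~\ref{lem1013sat}. Your explicit verification that $x^\ast,y\in\overline{\mathcal{B}'}$ and $x^\ast\neq y$ is a welcome bit of care that the paper leaves implicit.
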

\begin{proof}
Observe that
\[
\sup\Set{\,\abs{x-y}: x \in \partial B_{2r}(x_0), y \in K  \,} \le 3r.
\]
Therefore, it follows from Theorem \ref{thm_green_function}, Lemma \ref{lem1013sat}, and Theorem \ref{thm_capacity_measure} that for $x \in \partial B_{2r}(x_0)$, we have
\[
N^{-1}3^{2-d} r^{2-d} \mu(K)\le \int_K G(x,y)\,d\mu(y) =\hat u_K(x) \le 1. \qedhere
\]
\end{proof}

\section{Wiener test}			\label{sec5}
Recall that we assume $\mathcal{B}'' \subset \mathcal{B}' \subset \mathcal{B}$ are concentric open balls.
Let $r_0$ be a fixed number such that $0<r_0<\frac12 \dist(\mathcal{B}'', \partial \mathcal{B}')$.

\begin{theorem}		\label{thm_wiener}
Assume Conditions \ref{cond1} and \ref{cond2} hold, with $c = 0$.
Let $\Omega \subset \mathcal{B}''$ be an open set.
The point $x_0 \in \partial \Omega$ is a regular point if and only if
\begin{equation}			\label{eq1724thu}
\int_0^{r_0} \frac{\capacity(E_r)}{r^{d-1}}dr=+\infty,\; \text{ where }\,E_r:=\overline B_r(x_0)\setminus \Omega.
\end{equation}
\end{theorem}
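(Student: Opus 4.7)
Plan: My approach adapts the classical Wiener-test argument to the non-divergence setting by leveraging the tools developed in Sections \ref{sec3} and \ref{sec4}. Set $r_k := 2^{-k} r_0$ and let $K_k := (\overline{B}_{r_k}(x_0)\setminus B_{r_{k+1}}(x_0)) \cap E_{r_0}$ be the dyadic annular slices of $\mathcal{B}\setminus\Omega$ near $x_0$. A routine integral/sum comparison, combined with the monotonicity/subadditivity properties implicit in Lemma \ref{lem1513tue} and the scale bound $\capacity(K_k) \lesssim r_k^{d-2}$ from Lemma \ref{lem1650thu}, shows that condition \eqref{eq1724thu} is equivalent to the divergence of the dyadic Wiener series
\begin{equation*}
\sum_{k=0}^{\infty}\frac{\capacity(K_k)}{r_k^{d-2}}=+\infty.
\end{equation*}
Denoting by $\mu_k$ the capacitary measure of $K_k$ from Theorem \ref{thm_capacity_measure}, the two-sided Green's function estimates of Theorem \ref{thm_green_function} together with $\abs{x_0-y}\simeq r_k$ for $y\in K_k$ give
\begin{equation*}
N^{-1}\,r_k^{2-d}\,\capacity(K_k)\;\le\;\hat u_{K_k}(x_0)=\int G(x_0,y)\,d\mu_k(y)\;\le\;N\,r_k^{2-d}\,\capacity(K_k),
\end{equation*}
so $\sum_k \hat u_{K_k}(x_0)$ has the same convergence behavior as the Wiener series.

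For the sufficiency direction (divergence of the series implies regularity), the plan is to construct a barrier at $x_0$. Using the lower estimate above together with the Harnack inequality (Lemma \ref{lem2151thu} or \cite[Theorem 3.1]{Safonov}, applied along chains of dyadic balls to the $L$-solution $\hat u_{K_k}$ on $\mathcal{B}\setminus\overline{K_k}$), one propagates the pointwise lower bound from $x_0$ to all $x\in\Omega$ with $\abs{x-x_0}\lesssim r_{k+1}$. Combining this with monotonicity of the reduced function ($\hat u_{E_{r_0}} \ge \hat u_{K_k}$ pointwise, from Definition \ref{def_reduced_function}), the accumulated lower bounds force $\liminf_{x\to x_0,\,x\in\Omega}\hat u_{E_{r_0}}(x)=1$, after which Lemma \ref{lem1013sat}(e) upgrades this to $\hat u_{E_{r_0}}(x)\to 1$. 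The function $1-\hat u_{E_{r_0}}$ is then a nonnegative $L$-solution on $\Omega$ vanishing continuously at $x_0$, i.e., a barrier, which together with Lemma \ref{lem1210thu} and Definition \ref{def_regpt} yields regularity via the standard Perron construction.

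For the necessity direction, I would argue the contrapositive. Assume the Wiener series converges. Writing $\hat u_{E_r}(x_0)=\int G(x_0,y)\,d\mu(y)$ via Theorem \ref{thm_capacity_measure} and splitting the integral across the annular pieces $K_k$ (where $G(x_0,y)\le N r_k^{2-d}$), one uses the comparison principle, Lemma \ref{lem2020sat}, and the upper estimate $\hat u_{K_k}(x_0)\le N r_k^{2-d}\capacity(K_k)$ together with a domination by the sum $\sum_k \hat u_{K_k}$ to obtain
\begin{equation*}
\hat u_{E_r}(x_0)\;\lesssim\;\sum_{k\ge k(r)}\frac{\capacity(K_k)}{r_k^{d-2}}\;\longrightarrow\;0\quad\text{as }r\to 0.
\end{equation*}
Then, taking a continuous boundary datum $f$ on $\partial\Omega$ that equals $1$ near $x_0$ and vanishes away from it, the Perron solution $\underline{H}_f$ is compared to $\hat u_{E_r}$ on $\Omega$ via the comparison principle (Lemma \ref{lem2334sun}), yielding $\liminf_{x\to x_0}\underline H_f(x)<1=f(x_0)$, contradicting regularity.

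Main obstacle: The principal technical difficulty lies in the sufficiency direction, namely propagating the pointwise lower bound $\hat u_{K_k}(x_0)\gtrsim r_k^{2-d}\capacity(K_k)$ into a lower bound for $\hat u_{K_k}(x)$ that remains uniform as $x\to x_0$ through $\Omega$, so that infinitely many small contributions genuinely accumulate without the Harnack constants overwhelming the estimate. A second subtle point in the necessity direction is the comparison of $\hat u_{E_r}$ with the sum $\sum_k \hat u_{K_k}$: the capacitary measure of $E_r$ is not the sum of the $\mu_k$'s, so the inequality must be obtained by comparison-principle arguments on capacitary potentials rather than by decomposing measures directly.
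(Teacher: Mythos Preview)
Your necessity argument is essentially the paper's: both decompose the capacitary potential $\hat u_{E_r}(x_0)=\int G(x_0,y)\,d\mu_r(y)$ over dyadic shells, bound $G(x_0,y)$ by $r_k^{2-d}$ on each shell, and then use Lemma~\ref{lem2020sat} to compare the mass $\mu_r$ places on a shell with the capacity of that shell (the paper phrases this as $\mu_r(E_t)\le C\,\capacity(E_t)$, you phrase it as $\mu|_{K_k}\in\mathfrak M_{K_k}$). The conclusion $\hat u_{E_r}(x_0)\to 0$ and the comparison of $\underline H_f$ with $u_{E_r}$ are identical in spirit.

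The sufficiency argument, however, has a genuine gap at the ``accumulated lower bounds force $\liminf \hat u_{E_{r_0}}=1$'' step. From $\hat u_{E_{r_0}}\ge \hat u_{K_k}$ for each $k$ you only obtain $\hat u_{E_{r_0}}(x)\ge \sup_k \hat u_{K_k}(x)$, not a sum; and divergence of $\sum_k r_k^{2-d}\capacity(K_k)$ in no way forces $\sup_k r_k^{2-d}\capacity(K_k)$ to be close to $1$ (each term could be, say, $1/k$). There is no general superadditivity $\hat u_{\cup K_k}\gtrsim \sum_k \hat u_{K_k}$ available here, and the monotonicity you invoke gives only the maximum. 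The paper circumvents this by a \emph{multiplicative} iteration (Lemma~\ref{lem2119thu}): at each dyadic scale one shows, via the two-sided Green's function bounds and the comparison principle applied to $1-u_{E_\rho}$ against $1-\hat u_{E_{\alpha t}}$ on $\Omega\cap B_t(x_0)$, that
\[
\sup_{\Omega\cap B_{\alpha t}(x_0)}(1-u_{E_\rho})\;\le\;\Bigl(1-\kappa(\alpha t)^{2-d}\capacity(E_{\alpha t})\Bigr)\sup_{\Omega\cap B_t(x_0)}(1-u_{E_\rho}),
\]
and then iterates to get $1-u_{E_\rho}\le\exp\bigl(-c\int_r^{\alpha\rho}\capacity(E_t)t^{1-d}\,dt\bigr)$. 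This product-to-exponential mechanism is precisely what converts the divergent series into the barrier property, and it is absent from your outline. Your identification of the Harnack propagation as the ``main obstacle'' is misplaced: that step is routine, whereas the missing multiplicative decay is the heart of the proof.
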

\begin{proof}
(Necessity)
We will show that $x_0$ is not a regular point if
\begin{equation}			\label{eq0801thu}
\int_0^{r_0} \frac{\capacity(E_r)}{r^{d-1}}dr<+\infty.
\end{equation}
Let $\hat u_{E_r}$ and $\mu_{r}$ denote the capacitary potential and the capacitary measure of the compact set $E_r$, respectively, where $0<r<r_0/2$.
By Theorem \ref{thm_capacity_measure}, we have
\begin{equation}			\label{eq1353thu}
\hat u_{E_r}(x)=\int_{E_r} G(x, y)\,d\mu_r(y).
\end{equation}
Note that for $0<\rho<r$, we have
\[
\int_{E_\rho}G(x_0, y)\,d\mu_r(y)  \le \int_{E_r}G(x_0, y)\,d\mu_r(y)=\hat u_{E_r}(x_0) \le 1.
\]
By the dominated convergence theorem, we have
\[
\lim_{\rho \to 0} \int_{E_\rho}G(x_0, y)\,d\mu_r(y) = \int_{\set{x_0}} G(x_0,y)\,d\mu_r(y) \le 1.
\]
Since $G(x_0,x_0)=+\infty$, it follows $\mu_r(\set{x_0})=0$, which yields
\begin{equation}			\label{eq1445thu}
\int_{\set{x_0}} G(x_0,y)\,d\mu_r(y)=0.
\end{equation}
Hence, by partitioning $E_r$ into annular regions and using the Green's function estimates in Theorem~\ref{thm_green_function}, we derive from \eqref{eq1353thu} and \eqref{eq1445thu} the following:
\begin{align}
				\nonumber
\hat u_{E_r}(x_0) &= \sum_{k=0}^\infty \int_{E_{2^{-k}r}\setminus E_{2^{-k-1}r}} G(x_0, y)\,d\mu_r(y)+ \int_{\set{x_0}} G(x_0,y)\,d\mu_r(y) \\
& \nonumber \lesssim \sum_{k=0}^\infty r^{2-d} 2^{-k(2-d)} \mu_r(E_{2^{-k}r}) \lesssim  \frac{\mu_r(E_r)}{r^{d-2}} + \sum_{k=1}^{\infty}\int_{2^{-k}r}^{2^{-k+1}r}\frac{\mu_r(E_t)}{t^{d-1}}\,dt\\
&\lesssim \frac{\mu_r(E_r)}{r^{d-2}} + \int_0^r \frac{\mu_r(E_t)}{t^{d-1}}\,dt.
				\label{eq1725sat}
\end{align}

We will show that there exists a constant $C$ independent of $t$ such that
\begin{equation}			\label{eq1727sat}
\mu_r(E_t) \le C \mu_t(E_t)=C \capacity(E_t), \quad \forall t \in (0,r].
\end{equation}
Then the expression on the right-hand side of \eqref{eq1725sat}  tends to zero as $r \to 0$ due to the hypothesis \eqref{eq0801thu} and the following observation:
\[
\frac{\mu_r(E_r)}{r^{d-2}} \lesssim \int_{r}^{2r} \frac{\mu_r(E_t)}{t^{d-1}}\,dt.
\]
We choose $r$ to be sufficiently small such that $\hat u_{E_r}(x_0)<1$.
Consider the function
\[
f(x)=\left(1-\abs{x-x_0}/r\right)_+,
\]
and let $u=\underline{H}_f$ represent the lower Perron's solution to the Dirichlet problem $Lu=0$ in $\Omega$ with $u=f$ on $\partial\Omega$.
From Definition~\ref{def_reduced_function} and the comparison principle, it follows that $u \le u_{E_r}$ in $\Omega$.
Since $u_{E_r}=\hat u_{E_r}$ in $\Omega$, it implies $\underline{H}_f=u \le \hat u_{E_r}$ in $\Omega$.
However, this contradicts \eqref{eq1212thu} since
\[
\liminf_{x\to x_0,\,x\in\Omega}\, \underline{H}_f(x) \le \liminf_{x\to x_0,\,x\in\Omega}\, \hat u_{E_r}(x) = \liminf_{x\to x_0}\, \hat u_{E_r}(x)= \hat u_{E_r}(x_0) < 1=f(x_0),
\]
where we applied Lemmas \ref{lem1013sat} and \ref{lem2334sun}.
Therefore, $x_0$ is not a regular point.

It only remains to prove \eqref{eq1727sat}.
For $0<t \le r$, let $\nu$ be the measure defined by
\[
\nu(E)=\mu_r(E_t \cap E).
\]
It is clear that $\nu$ is supported in $E_t$.
Moreover, for every $x \in \mathcal{B}$, we have
\[
\int_{E_t} G(x,y)\,d\nu(y) =\int_{E_t} G(x,y)\,d\mu_r(y) \le \int_{E_r} G(x,y)\,d\mu_r(y) \le 1.
\]
Therefore, by Lemma \ref{lem2020sat}, we deduce that
\[
\int_{E_t} G(x,y)\,d\mu_r(y)=\int_{E_t} G(x,y)\,d\nu(y) \le \hat u_{E_t}(x)=\int_{E_t} G(x,y)\,d\mu_t(y),\quad \forall x \in \mathcal{B}.
\]
By taking $x=X^o \in \partial \mathcal{B}'$ in the previous inequality, and using Theorem~\ref{thm_green_function}, we obtain \eqref{eq1727sat}.
This completes the proof for necessity.\\

\noindent
(Sufficiency)
It suffices to demonstrate the existence of a ``barrier'' function $u$ satisfying the following conditions:
\begin{enumerate}[leftmargin=*]
\item[(i)]
$u$ is an $L$-solution in $\Omega$.
\item[(ii)]
For any $\epsilon>0$, there exists $\delta>0$ such that $u \ge \delta$ on $\partial\Omega \setminus B_\epsilon(x_0)$.
\item[(iii)]
$\lim_{x\to x_0,\, x\in \Omega} u(x)=0$.
\end{enumerate}
Let $\rho \in (0,r_0)$ be a fixed yet arbitrarily small number.
The subsequent lemma is a minor modification of \cite[Lemma 2.3]{GW82}.
\begin{lemma}			\label{lem2119thu}
There exist constants $\alpha_0 \in (0,\frac12)$ and $\kappa>0$ such that for all $\alpha \in (0,\alpha_0)$ and $r\in (0, \alpha \rho]$ we have
\[
\sup_{\Omega \cap B_r(x_0)}\left (1- u_{E_{\rho}}\right) \le \exp\left\{ -\frac{\kappa \alpha^{d-1}}{1-\alpha}\int_r^{\alpha \rho} \frac{\capacity(E_t)}{t^{d-1}}\,dt\right\}.
\]
\end{lemma}
\begin{proof}
In the proof, we express
\begin{equation}			\label{eq0704sun}
v(x)=1-u_{E_\rho}(x)=\sup\Set{w(x):w \in \mathfrak{S}^-(\mathcal{B}),\; w \le 1\text{ in }\mathcal{B},\; w \le 0 \text{ in }\overline B_{\rho}(x_0)\setminus \Omega}.
\end{equation}
Here, we used Definition \ref{def_reduced_function}.
Note that $0\le v \le 1$.

Let $0<\alpha<\frac12$ and  $0<t \le \rho$.
It follows from Theorems \ref{thm_green_function} and \ref{thm_capacity_measure} that for $x \in \partial B_t(x_0)$, we have
\[
1- \hat u_{E_{\alpha t}}(x) =1-\int_{E_{\alpha t}} G(x,y)\,d\mu_{\alpha t} \ge 1-N2^{d-2} t^{2-d} \capacity(E_{\alpha t}).
\]
Applying Lemma \ref{lem1650thu}, we observe that
\[
1-N2^{d-2}t^{2-d}\capacity(E_{\alpha t}) \ge  1-6^{d-2}N^2 \alpha^{d-2}>0
\]
if $0<\alpha<\frac16 N^{-2/(d-2)}$.
Consequently, for $x\in \Omega \cap \partial B_{t}(x_0)$, we have
\begin{equation}			\label{eq2108fri}
\left(1-\hat u_{E_{\alpha t}}(x)\right) \sup_{\Omega \cap B_t(x_0)} v \ge \left(1- 2^{d-2}N t^{2-d} \capacity(E_{\alpha t})\right) v(x).
\end{equation}
Note that \eqref{eq2108fri} is also valid on $\partial\Omega \cap \overline B_{t}(x_0)$ due to \eqref{eq0704sun}, and by the comparison principle, inequality \eqref{eq2108fri} holds for all $x \in \Omega \cap B_t(x_0)$.

On the other hand, by Theorems \ref{thm_green_function} and \ref{thm_capacity_measure}, for $x \in \partial B_{2\alpha t}(x_0)$, we have
\begin{equation}			\label{eq2109fri}
1\ge \hat u_{E_{\alpha t}}(x) \ge N^{-1} (3\alpha t)^{2-d} \capacity(E_{\alpha t}).
\end{equation}

Then, for $x \in \Omega\cap \partial B_{2\alpha t}$, inequalities \eqref{eq2108fri} and \eqref{eq2109fri} imply
\begin{align*}			
v(x) &\le \sup_{\Omega \cap B_t(x_0)}v \;\;\left( \frac{1- N^{-1}3^{2-d}(\alpha t)^{2-d} \capacity(E_{\alpha t})}{1- N2^{d-2} t^{2-d} \capacity(E_{\alpha t})}\right)\\
&\le \sup_{\Omega \cap B_t(x_0)}v \;\; \left(1- \frac{N^{-1}3^{2-d}}{2}(\alpha t)^{2-d} \capacity(E_{\alpha t})\right),
\end{align*}
with the last inequality valid for $0<\alpha \le \frac16 (2N^2)^{-1/(d-2)}$.
Again, by \eqref{eq0704sun} and the comparison principle,
we observe that the previous inequality remain valid for every $x \in \Omega \cap B_{2\alpha t}(x_0)$.
Using the notation
\[
M(r):=\sup_{\Omega \cap B_r(x_0)} v,
\]
we have established the existence of positive numbers $\alpha_0$ and $\kappa$ such that for $t \in (0, \rho]$ and  $\alpha \in (0, \alpha_0)$, the following inequality holds:
\[
M(\alpha t) \le M(t) \left\{1-\kappa(\alpha t)^{2-d} \capacity(E_{\alpha t}) \right\}.
\]
Utilizing the inequality $\ln (1-x) \le -x$ for $x <1$, we deduce
\[
M(\alpha t) \le M(t) \exp\left\{-\kappa (\alpha t)^{2-d} \capacity(E_{\alpha t}) \right\}.
\]
By iterating, we deduce that
\[
M(\alpha^j \rho) \le M(\rho) \exp\left\{-\kappa \sum_{i=1}^j (\alpha^i \rho)^{2-d} \capacity(E_{\alpha^i \rho}) \right\},\quad j=1,2,\ldots.
\]
Note that for $r \in (\alpha^{j+1} \rho, \alpha^j \rho]$, we have
\begin{multline*}
\int_r^{\alpha \rho} \frac{\capacity(E_t)}{t^{d-1}}\,dt \le \int_{\alpha^{j+1} \rho}^{\alpha \rho} \frac{\capacity(E_t)}{t^{d-1}}\,dt =\sum_{i=1}^{j}\int_{\alpha^{i+1} \rho}^{\alpha^i \rho} \frac{\capacity(E_t)}{t^{d-1}}\,dt\\
\le \sum_{i=1}^{j} \frac{\capacity(E_{\alpha^i \rho})}{(\alpha^{i+1}\rho)^{d-1}}(\alpha^i\rho-\alpha^{i+1}\rho)=\alpha^{1-d}(1-\alpha)\sum_{i=1}^{j} (\alpha^{i}\rho)^{2-d}\capacity(E_{\alpha^i \rho}).
\end{multline*}
For $r \in (0,\alpha \rho]$, choose $j$ such that  $r \in (\alpha^{j+1} \rho, \alpha^j \rho]$, then we have
\[
M(r) \le M(\alpha^j \rho) \le M(\rho)\exp\left\{ -\frac{\kappa \alpha^{d-1}}{1-\alpha}\int_r^{\alpha \rho} \frac{\capacity(E_t)}{t^{d-1}}\,dt\right\}.\qedhere
\]
\end{proof}

By Lemma~\ref{lem2119thu}, we obtain the following inequality for any $\alpha \in (0, \alpha_0)$:
\begin{equation}			\label{eq1332thu}
1-u_{E_{\rho}}(x)\le \exp\left(-\frac{\kappa \alpha^{d-1}}{1-\alpha}\int_{\abs{x-x_0}}^{\alpha\rho}\frac{\capacity(E_t)}{t^{d-1}}\,dt\right),\quad  \forall x \in \Omega \cap B_{\alpha \rho}(x_0).
\end{equation}
On the other hand, by Theorems \ref{thm_green_function}, \ref{thm_capacity_measure}, Lemmas \ref{lem1013sat}, and \ref{lem1650thu}, for $x \in \partial B_{\rho/\alpha}(x_0)$, we have
\[
u_{E_{\rho}}(x) = \hat u_{E_{\rho}}(x) \le N \left\{\left(\frac{1}{\alpha}-1\right)\rho\right\}^{2-d} \capacity(E_{\rho}) \le N^2 \left(\frac{3\alpha}{1-\alpha}\right)^{d-2}.
\]
We choose $\alpha \in (0,\alpha_0)$ such that the right-hand side of the above inequality becomes less than $\frac12$, i.e., we have
\begin{equation}			\label{eq1723thu}
u_{E_{\rho}}(x) <1/2,\quad \forall x \in \partial B_{\rho/\alpha}(x_0).
\end{equation}

Now, consider the function
\[
f(x)=\abs{x-x_0},
\]
and let $u=\underline H_f$ be the lower Perron's solution to the Dirichlet problem $Lu=0$ in $\Omega$, with $u=f$ on $\partial\Omega$.
It is evident that $0 \le u \le \diam (\Omega)$ by the definition of $\underline H_f$.
Therefore, based on \eqref{eq1723thu}, we have
\[
u(x) \le
\begin{cases}
2\diam(\Omega) \left(1-u_{E_{\rho}}(x)\right)& \text{in }\;\Omega \cap \partial B_{\rho/\alpha}(x_0),\\
\rho/\alpha &  \text{in }\; \partial\Omega \cap \overline B_{\rho/\alpha}(x_0).
\end{cases}
\]
Note that the function $w$, defined by
\[
w(x)=2\diam(\Omega) \left(1-u_{E_{\rho}}(x)\right)+\rho/\alpha,
\]
satisfies $Lw =0$ in $\Omega \cap B_{\rho/\alpha}(x_0)$ and $w \ge \rho/\alpha$ in $\Omega$.
Therefore, the comparison principle applied to $u$ and $w$ on $\Omega \cap B_{\rho/\alpha}(x_0)$, combined with \eqref{eq1332thu}, implies
\[
u(x) \le \frac{\rho}{\alpha}+2\diam(\Omega) \exp\left(-\frac{\kappa \alpha^{d-1}}{1-\alpha} \int_{\abs{x-x_0}}^{\alpha\rho}\frac{\capacity(E_t)}{t^{d-1}}\,dt\right),\quad \forall x \in \Omega \cap B_{\alpha \rho}(x_0).
\]
Subsequently,  \eqref{eq1724thu} indicates
\[
\limsup_{x \to x_0,\,x\in \Omega} u(x) \le \rho/\alpha.
\]
As $\rho>0$ can be arbitrarily small, $u$ is constructed independently of $\rho$ and $\alpha$,  and $u \ge 0$, we conclude:
\[
\lim_{x \to x_0,\,x\in \Omega} u(x) =0.
\]

Let $R=\diam \Omega$. For any $\epsilon \in (0,R)$, let $v$ be the solution of the following problem:
\[
Lv=0\;\text{ in }\; B_R(x_0)\setminus \overline B_{\epsilon/2}(x_0),\quad v=0\;\text{ on }\;\partial B_{\epsilon/2}(x_0),\quad v=\epsilon/2\;\text{ on }\;\partial B_{R}(x_0).
\]
Extend $v$ to be zero in $\overline B_{\epsilon/2}(x_0)$.
Notice that $v$ is a continuous $L$-subsolution in $B_R(x_0)$ satisfying $0\le v \le \epsilon/2$.
Since $v \le f$ on $\partial \Omega$, it follows from Definition \ref{def_perron} that $v\le \underline H_f=u$ in $\Omega$.
The argument in the proof Proposition \ref{lem1012thu}  reveals that  $v \ge \delta$ in $B_R(x_0) \setminus B_{\epsilon}(x_0)$ for some $\delta>0$, implying that $u \ge \delta$ in $\Omega\setminus B_{\epsilon}(x_0)$.
Thus, we have confirmed that $u$ satisfies all the conditions (i) -- (iii) to be a "barrier."
This completes the proof of Theorem \ref{thm_wiener}.
\end{proof}

\begin{theorem}		\label{cor0800sat}
Assume Conditions \ref{cond1} and \ref{cond2} hold.
Let $\Omega \subset \bR^d$ be a bounded open set.
A point $x_0 \in \partial \Omega$ is a regular point for $L$ if and only if $x_0$ is a regular point for the Laplace operator.
\end{theorem}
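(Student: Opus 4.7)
The plan is to first reduce to the case $c \equiv 0$ via the multiplicative transformation of Proposition \ref{lem1012thu}, and then combine the Wiener-type criterion of Theorem \ref{thm_wiener} with the capacity comparison of Theorem \ref{thm_equivalence}, so that regularity for $L$ is expressed in terms of the classical Wiener series for the Laplacian and hence equivalent to regularity for $\Delta$.

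First I would enlarge $\mathcal{B}$ if necessary so that $\overline{\Omega} \Subset \mathcal{B}''$ (legal since $\mathcal{B}, \mathcal{B}', \mathcal{B}''$ are arbitrary so long as the nesting holds). By Proposition \ref{lem1012thu} there exists $\zeta \in W^{2,p_0/2}(\mathcal{B}) \cap C(\overline{\mathcal{B}})$ satisfying $L\zeta = 0$ in $\mathcal{B}$ with $\delta \le \zeta \le 1$. Setting $\tilde{L}v := a^{ij}D_{ij}v + \tilde{b}^i D_i v$ with $\tilde{b}^i := b^i + 2a^{ij}D_j\zeta/\zeta$ as in \eqref{dongha1654}, the operator $\tilde{L}$ satisfies Conditions \ref{cond1} and \ref{cond2} with $c = 0$. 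The identity $Lu = \zeta\, \tilde{L}v$ for $u = \zeta v$, together with the strict positivity and continuity of $\zeta$, produces a bijection between $L$-Dirichlet data and $\tilde{L}$-Dirichlet data on every ball $B \Subset \mathcal{B}$ via $g \mapsto g/\zeta$; in particular the harmonic measures are related by $d\omega_B^{L,x}(y) = (\zeta(x)/\zeta(y))\, d\omega_B^{\tilde{L},x}(y)$ on $\partial B$. Using this, I would verify directly from Definition \ref{def_supersol} that $u \in \mathfrak{S}^{\pm}(\mathcal D)$ for $L$ if and only if $u/\zeta \in \mathfrak{S}^{\pm}(\mathcal D)$ for $\tilde{L}$. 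Consequently, for $f \in C(\partial\Omega)$ the Perron solutions obey $\overline H_f^L = \zeta \cdot \overline H_{f/\zeta}^{\tilde{L}}$ and $\underline H_f^L = \zeta \cdot \underline H_{f/\zeta}^{\tilde{L}}$, and since $f \mapsto f/\zeta$ is a bijection of $C(\partial\Omega)$ by the continuity and positivity of $\zeta|_{\overline{\Omega}}$, one has $\lim_{x\to x_0} \underline H_f^L(x) = f(x_0)$ for all $f$ if and only if $\lim_{x\to x_0} \underline H_{f/\zeta}^{\tilde{L}}(x) = f(x_0)/\zeta(x_0)$ for all such $f$. Thus $x_0$ is regular for $L$ if and only if it is regular for $\tilde{L}$.

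Having reduced to $\tilde L$ with $c \equiv 0$, Theorem \ref{thm_wiener} applies: $x_0$ is regular for $\tilde L$ if and only if $\int_0^{r_0} \widetilde{\capacity}(E_r)\, r^{1-d}\, dr = +\infty$, where $\widetilde{\capacity}$ is the capacity associated with $\tilde L$. By Theorem \ref{thm_equivalence}, $\widetilde{\capacity}(E_r) \simeq \capacity^0(E_r)$ uniformly in $r$, where $\capacity^0$ is the Newtonian capacity. Hence the Wiener series for $\tilde L$ and that for $\Delta$ diverge simultaneously, and the classical Wiener criterion for $\Delta$ then yields the equivalence with regularity of $x_0$ for the Laplace operator, completing the chain.

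The main obstacle is the reduction step, i.e., verifying that $u \mapsto u/\zeta$ is a bijection between $L$-supersolutions and $\tilde L$-supersolutions. This requires tracking the harmonic-measure identity through every clause of Definition \ref{def_supersol} (pointwise averaging inequality, lower semicontinuity, the values $\pm\infty$ on connected components) and checking that the boundary limiting behavior appearing in Definition \ref{def_perron} transfers cleanly, which it does because $\zeta$ is continuous and uniformly bounded away from $0$ on $\overline{\Omega}$. Once this correspondence is in place, the remainder is a direct assembly of Theorem \ref{thm_wiener}, Theorem \ref{thm_equivalence}, and the classical Wiener theorem.
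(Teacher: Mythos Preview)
Your proposal is correct and follows essentially the same route as the paper: reduce to $c\equiv 0$ via the substitution $u=\zeta v$ of Proposition~\ref{lem1012thu}, then combine Theorem~\ref{thm_wiener} with Theorem~\ref{thm_equivalence} and the classical Wiener criterion. The paper's own proof is terser---it simply cites ``the discussion at the end of Section~\ref{sec2.1}'' for the reduction---whereas you spell out the harmonic-measure identity and the bijection $\mathfrak{S}^{\pm}_L \leftrightarrow \mathfrak{S}^{\pm}_{\tilde L}$ explicitly, which is a genuine (and welcome) clarification since the paper only develops the Perron machinery under the standing hypothesis $c=0$.
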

\begin{proof}
In the case when $c = 0$, the theorem follows directly from Theorems \ref{thm_wiener} and \ref{thm_equivalence}.
Then, by the discussion at the end of Section \ref{sec2.1}, we can remove the assumption $c = 0$.
\end{proof}

\begin{definition}
                    \label{def_reg}
We define a domain $\Omega$ as a regular domain if every point on its boundary $\partial \Omega$ is a regular point with respect to the Laplace's operator.
\end{definition}

\begin{theorem}		\label{thm0802sat}
Under the assumptions that Conditions \ref{cond1} and \ref{cond2} are satisfied, consider a bounded regular domain $\Omega \subset \bR^d$.
For $f \in C(\partial\Omega)$, the Dirichlet problem,
\begin{equation}			\label{eq0751sat}
Lu=0 \;\text{ in }\;\Omega, \quad u=f \;\text{ on }\;\partial\Omega,
\end{equation}
possesses a unique solution in $W^{2,p_0/2}_{\rm loc}(\Omega)\cap C(\overline\Omega)$.
\end{theorem}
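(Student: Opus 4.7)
The plan is to combine Perron's method (to construct a solution), the regular-point equivalence in Theorem \ref{cor0800sat} (to show continuous attainment of boundary data), and the Aleksandrov--Bakel'man--Pucci type maximum principle \cite[Theorem 1.1]{Safonov} already invoked in Lemma \ref{lem0114mon} (for uniqueness). Because the Perron machinery in Section \ref{sec2} was built under the standing assumption $c \equiv 0$, I first reduce to that case.

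Using the strictly positive $\zeta \in W^{2,p_0/2}(\mathcal{B}) \cap C(\overline{\mathcal{B}})$ from Proposition \ref{lem1012thu}, set $u = \zeta v$ as in \eqref{dongha1654}--\eqref{dongha1655}. Then solving \eqref{eq0751sat} is equivalent to solving $\tilde L v = 0$ in $\Omega$ with $v = f/\zeta$ on $\partial \Omega$, where $\tilde L = a^{ij} D_{ij} + \tilde b^i D_i$ still satisfies Conditions \ref{cond1} and \ref{cond2} with $c \equiv 0$ and $\norm{\tilde{\vec b}}_{L^{p_0}(\mathcal{B})} \le C$. Because $\zeta \in C(\overline \Omega)$ with $\delta \le \zeta \le 1$ and $\norm{D\zeta}_{L^{p_0}(\mathcal{B})} \le C$, the continuous function $f/\zeta$ lies in $C(\partial \Omega)$, and a product-rule computation shows that $v \in W^{2,p_0}_{\rm loc}(\Omega) \cap C(\overline \Omega)$ implies $u = \zeta v \in W^{2,p_0/2}_{\rm loc}(\Omega) \cap C(\overline \Omega)$ (the worst term being $D_i \zeta \cdot D_j v$, which is only in $L^{p_0/2}_{\rm loc}$ by H\"older). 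Uniqueness transfers along the same bijection. So it suffices to treat $c \equiv 0$.

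Under this reduction, I would define $u := \underline H_f$ as in Definition \ref{def_perron}. Lemma \ref{lem1210thu} says $u$ is an $L$-solution in $\Omega$, and Lemma \ref{lem_solution} then yields $u \in W^{2,p_0}_{\rm loc}(\Omega)$ with $L u = 0$ a.e., which is stronger than the claimed regularity. The boundary attainment is precisely where the regularity hypothesis on $\Omega$ enters: by Definition \ref{def_reg} every $x_0 \in \partial \Omega$ is regular for the Laplacian, hence by Theorem \ref{cor0800sat} also regular for $L$, and Definition \ref{def_regpt} gives
\[
\lim_{x \to x_0,\, x \in \Omega} \underline H_f(x) = f(x_0), \qquad \forall x_0 \in \partial \Omega,
\]
so $u$ extends continuously to $\overline \Omega$ with $u\big|_{\partial \Omega} = f$. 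For uniqueness, any difference $w = u_1 - u_2$ of two solutions lies in $W^{2,p_0/2}_{\rm loc}(\Omega) \cap C(\overline \Omega)$, satisfies $L w = 0$ a.e.\ in $\Omega$, and vanishes on $\partial \Omega$; applying \cite[Theorem 1.1]{Safonov} to $\pm w$ gives $w \equiv 0$.

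The main obstacle I anticipate is not conceptual but bookkeeping: verifying that the substitution $u = \zeta v$ transfers the class $W^{2,p_0/2}_{\rm loc}$ cleanly in both directions. The key observation is that, although the reduced $c \equiv 0$ problem admits a $W^{2,p_0}_{\rm loc}$ Perron solution $v$, the product $\zeta v$ only lies in $W^{2,p_0/2}_{\rm loc}$ because of the mixed cross-term in $D_{ij}(\zeta v)$; this is exactly why the theorem settles for $W^{2,p_0/2}_{\rm loc}$ rather than $W^{2,p_0}_{\rm loc}$. Once this routine check is in place, the three ingredients---Perron's solution is an $L$-solution (Lemmas \ref{lem1210thu}, \ref{lem_solution}), boundary regularity via the Wiener-test equivalence (Theorem \ref{cor0800sat}), and Safonov's maximum principle---combine to give the result.
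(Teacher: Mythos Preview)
Your proposal is correct and follows essentially the same path as the paper's proof: reduce to $c \equiv 0$ via the substitution $u = \zeta v$ from Proposition~\ref{lem1012thu}, take the Perron solution for the reduced operator $\tilde L$, use Lemmas~\ref{lem1210thu} and~\ref{lem_solution} for interior $W^{2,p_0}_{\rm loc}$ regularity, invoke Theorem~\ref{cor0800sat} for continuous boundary attainment, and recover $u = \zeta v \in W^{2,p_0/2}_{\rm loc}(\Omega)\cap C(\overline\Omega)$. The only cosmetic difference is that the paper handles uniqueness for the original operator $L$ directly via \cite[Corollary~4.3]{Krylov21} rather than routing it through the bijection and \cite[Theorem~1.1]{Safonov}.
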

\begin{proof}
Let $\zeta$ be as in Proposition \ref{lem1012thu}, and consider the operator $\tilde L$ defined by
\[
\tilde L v=a^{ij}D_{ij}v +  (b^i+2a^{ij}D_j \zeta/\zeta) D_i v.
\]
Let $v=\underline H_f$ be the Perron's solution for the problem
\[
\tilde Lv=0 \;\text{ in }\;\Omega, \quad v=f/\zeta \;\text{ on }\;\partial\Omega.
\]
Since $\zeta \in C(\overline{\mathcal{B}})$, we have $f/\zeta \in C(\partial\Omega)$.
By Theorem \ref{cor0800sat}, we deduce that $v \in C(\overline{\Omega})$ and $v = f/\zeta$ on $\partial\Omega$.
Furthermore, from Lemmas \ref{lem1210thu} and \ref{lem_solution}, we conclude that $v \in W^{2,p_0}_{\rm loc}(\Omega)$.
Given the properties of $\zeta$, it is evident that $\zeta v \in W^{2, p_0/2}_{\rm loc}(\Omega) \cap C(\overline{\Omega})$.
Additionally, as discussed after Proposition \ref{lem1012thu}, $u=\zeta v$ satisfies the problem \eqref{eq0751sat}.
Uniqueness of $u$ is an immediate consequence of the maximum principle (see \cite[Corollary 4.3]{Krylov21}).
\end{proof}

\section{Green's function in regular domains}			\label{sec6}

In this section, we construct the Green's functions for $L$ in regular domains $\Omega \Subset \mathcal{B}''$.
Let $G_{\mathcal{B}}(x,y)$ denote the Green's function in $\mathcal{B}$, as constructed in Section \ref{sec3}.
To construct $G_\Omega(x,y)$, the Green's function in $\Omega$, we follow these steps.
For each $y \in \Omega$, we consider the Dirichlet problem:
\begin{equation}				\label{eq1645sun}
\left\{
\begin{array}{ccc}
Lu=0 &\text{ in } & \Omega,\\
u=-G_{\mathcal{B}}(\,\cdot, y)& \text{ on }&\partial \Omega.
\end{array}
\right.
\end{equation}
Given that $G_{\mathcal{B}}(\,\cdot, y)$ is continuous on $\partial\Omega$ and $\Omega$ is a regular domain, by Theorem \ref{thm0802sat}, a unique solution for the problem exists.
We denote it  by $K(\,\cdot, y)$.
It is evident from the construction that  $K(\,\cdot,y)$ is continuous in $\overline\Omega$ for each fixed $y$.
We aim to demonstrate that, for each fixed $x \in \Omega$, the mapping $K(x, \cdot\,)$ is also continuous in $\Omega$.
Indeed, since $\Omega$ is regular, for each $x \in \Omega$, we have harmonic measure $\omega^x_\Omega$ defined on $\partial \Omega$.
Utilizing the fact that $K(\,\cdot, y)$ is the solution to the problem \eqref{eq1645sun}, we can express it as:
\begin{equation}			\label{eq1049sat}
K(x,y)=-\int_{\partial\Omega} G_{\mathcal{B}}(\,\cdot,y)\, d\omega^x_\Omega.
\end{equation}
Therefore, $K(x,y)$ is continuous in $y$ for each $x \in \Omega$ as we claimed.

We then define $G_\Omega(x,y)=G_{\mathcal{B}}(x,y)+K(x,y)$.
To verify that $G_\Omega$ is the Green's function in $\Omega$, for any $f \in C^\infty_c(\Omega)$, consider the function $v$ defined by
\[
v(x)=\int_{\Omega} G_{\Omega}(x,y) f(y)\,dy=\int_{\Omega} G_{\mathcal{B}}(x,y) f(y)\,dy+\int_{\Omega} K(x,y) f(y)\,dy.
\]

As $f \in C^\infty_c(\Omega)\subset C^\infty_c(\mathcal{B})$, if we set
\[
w(x)=\int_{\Omega} G_{\mathcal{B}}(x,y) f(y)\,dy=\int_{\mathcal{B}} G_{\mathcal{B}}(x,y) f(y)\,dy,
\]
then according to \cite{HK20}, $w$ becomes the unique solution to the problem
\[
\left\{
\begin{array}{ccc}
Lw=-f &\text{ in } & \mathcal{B},\\
w=0 & \text{ on }&\partial \mathcal{B}.
\end{array}
\right.
\]
In particular, we have $Lw=-f$ in $\Omega$.
Also, we have
\[
L\int_\Omega K(\,\cdot,y) f(y)\,dy=0.
\]
Therefore, we have $Lv=-f$ in $\Omega$.
Additionally, $v=0$ on $\partial\Omega$ as $K(\,\cdot,y)=-G_{\mathcal{B}}(\,\cdot, y)$ on $\partial\Omega$.

\begin{theorem}			\label{thm1127sat}
Assuming Conditions \ref{cond1} and \ref{cond2}, let $\Omega \subset \bR^d$ be a bounded regular domain.
Then there exists the Green's function $G_{\Omega}(x,y)$ in $\Omega$, and it has the following pointwise bound:
\[
0\le G_{\Omega}(x,y) \le N \abs{x-y}^{2-d},\quad \forall x\neq y \in \Omega,
\]
where $N$ is a constant depending only on $d$, $\lambda$, $\Lambda$, $\omega_{\mathbf A}$, $p_0$, $\diam \Omega$.
\end{theorem}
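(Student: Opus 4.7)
The construction of $G_\Omega$ has essentially been laid out in the paragraphs preceding the theorem: we set $G_\Omega(x,y)=G_{\mathcal{B}}(x,y)+K(x,y)$, where $K(\cdot,y)$ is the solution, supplied by Theorem \ref{thm0802sat}, of the Dirichlet problem \eqref{eq1645sun} with continuous boundary data $-G_{\mathcal{B}}(\cdot,y)$. The verification that this function serves as a Green's function in $\Omega$, in the sense of the representation formula for strong solutions of $Lv=-f$ in $\Omega$ with $v=0$ on $\partial\Omega$, has already been carried out. The plan for the theorem is therefore to establish the two sided bound $0\le G_\Omega(x,y)\le N\abs{x-y}^{2-d}$, with $\mathcal{B}$ chosen from the outset so that $\diam\mathcal{B}\le C\diam\Omega$ (possible by the fixed-but-arbitrary choice in Section \ref{sec2}), so that the constants inherited from Theorem \ref{thm_green_function} depend only on $\diam\Omega$.

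For the upper bound, the plan is to apply the comparison principle \cite[Theorem 1.1]{Safonov} to $K(\cdot,y)$. Since $LK(\cdot,y)=0$ in $\Omega$, $K(\cdot,y)\in W^{2,p_0/2}_{\rm loc}(\Omega)\cap C(\overline\Omega)$, and $K(\cdot,y)=-G_{\mathcal{B}}(\cdot,y)\le 0$ on $\partial\Omega$ (using that $G_{\mathcal{B}}\ge 0$ from Section \ref{sec3}), one concludes $K(\cdot,y)\le 0$ throughout $\Omega$. Combined with the upper bound from Theorem \ref{thm_green_function}, this yields $G_\Omega(x,y)\le G_{\mathcal{B}}(x,y)\le N\abs{x-y}^{2-d}$.

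For the lower bound $G_\Omega\ge 0$, the plan is a duality argument. Given a nonnegative $f\in C_c^\infty(\Omega)$, define $w(x)=\int_\Omega G_\Omega(x,y)f(y)\,dy$. By the construction (already verified before the theorem), $w$ is the strong solution of $Lw=-f$ in $\Omega$ with $w=0$ on $\partial\Omega$. Because $-f\le 0$ and $c\le 0$, the maximum principle of \cite[Corollary 4.3]{Krylov21} gives $w\ge 0$ in $\Omega$. Since this holds for every nonnegative $f\in C_c^\infty(\Omega)$, we conclude that $G_\Omega(x,\cdot)\ge 0$ almost everywhere on $\Omega\setminus\{x\}$. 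Continuity of $G_\Omega(x,\cdot)$ on $\Omega\setminus\{x\}$, inherited from that of $G_{\mathcal{B}}(x,\cdot)$ together with the continuity of $K(x,\cdot)$ visible from the harmonic measure representation \eqref{eq1049sat}, then upgrades this to pointwise nonnegativity.

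The principal subtlety is verifying that the boundary trace $-G_{\mathcal{B}}(\cdot,y)$ really does lie in $C(\partial\Omega)$ so that Theorem \ref{thm0802sat} applies to produce $K(\cdot,y)$; this is immediate, since $y\in\Omega\Subset\mathcal{B}''$ keeps $\partial\Omega$ at positive distance from the singularity, and the continuity of $G_{\mathcal{B}}$ off the diagonal follows from the interior $W^{2,p_0/2}_{\rm loc}$ regularity used in the proof of Theorem \ref{main_Green_const}. Since Condition \ref{cond1} allows $c\le 0$ directly and the comparison principle handles this case without modification, no reduction to $c\equiv 0$ is needed here; the only role of such a reduction is in invoking Theorem \ref{thm0802sat} to build $K$, which was already addressed in the previous section.
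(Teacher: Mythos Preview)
Your argument is correct, and for the upper bound it is essentially identical to the paper's: the paper reads $K(x,y)\le 0$ directly off the harmonic measure formula \eqref{eq1049sat}, while you obtain the same inequality via the comparison principle applied to $K(\cdot,y)$; these are two ways of saying the same thing.

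For the lower bound the two proofs diverge. The paper argues pointwise in the first variable: since $K(\cdot,y)$ is bounded on $\overline\Omega$ while $G_{\mathcal B}(\cdot,y)\ge N^{-1}\abs{\cdot-y}^{2-d}$ by Theorem \ref{thm_green_function}, one has $G_\Omega(\cdot,y)\ge 0$ on $\partial B_\epsilon(y)$ for $\epsilon$ small, and then the maximum principle in $\Omega\setminus \overline{B_\epsilon(y)}$ (where $LG_\Omega(\cdot,y)=0$) finishes. Your route is a duality argument in the second variable: you test against nonnegative $f\in C_c^\infty(\Omega)$, apply the maximum principle to the resulting solution of $Lw=-f$, and then use continuity of $G_\Omega(x,\cdot)$ from \eqref{eq1049sat} to pass from a.e.\ to everywhere. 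Your approach has the mild advantage that it never invokes the \emph{lower} bound on $G_{\mathcal B}$, only its nonnegativity; the paper's approach has the advantage of being entirely in one variable and not needing the separate continuity-in-$y$ step. Both are short and clean.
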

\begin{proof}
We may assume that $0 \in \Omega$.
Let $\mathcal{B}=B_{4R}(0)$, $\mathcal{B}'=B_{2R}(0)$, and $\mathcal{B}''=B_R(0)$, where $R$ is chosen sufficiently large such $\Omega \Subset \mathcal{B}''$.
In the discussion above, we have shown that $G_\Omega(x,y)=G_{\mathcal{B}}(x,y)+K(x,y)$ is the Green's function in $\Omega$.
From \eqref{eq1049sat} and the fact that $G_\mathcal{B}(\cdot ,y) \ge 0$, we find that $K(x,y) \le 0$.
Therefore, we have
\[
G_\Omega(x,y) \le G_\mathcal{B}(x,y) \le N \abs{x-y}^{2-d},
\]
where $N$ is the constant from Theorem \ref{thm_green_function}.
We also note that \eqref{eq1049sat}, along with Theorem~\ref{thm_green_function}, implies that for sufficiently small $\epsilon>0$, we have $G_\Omega(x,y) \ge 0$ for every $x \in \partial B_\epsilon(y)$.
As $G_\Omega(x,y)=0$ for $x\in \partial\Omega$, we deduce that $G_\Omega(x,y) \ge 0$ by the maximum principle.
The theorem follows from the observation that $\diam \mathcal{B}=4R$ and $\dist(\partial \mathcal{B}', \mathcal{B})=2R$.
\end{proof}

\appendix
\section{}			\label{sec7}
To make the article self-contained, we provide proofs of some lemmas that are well-known in the context of abstract potential theory.

We will frequently use Baire's theorem on semicontinuous functions, which states that if $f$ is a lower semicontinuous function (l.s.c.) that does not take the value $-\infty$, there exists an increasing sequence of continuous functions $\set{f_n}$ such that $f_n$ converges to $f$ pointwise.

\subsection{Proof of Lemma~\ref{lem2334sun}}
\begin{lemma}[Lemma \ref{lem2334sun}]			
Let $u \in \mathfrak{S}^+(\mathcal{D})$. The following statements are valid:
\begin{enumerate}
\item (Strong minimum principle)
Let $\mathcal{D}$ be connected.
If $u \in \mathfrak{S}^+(\mathcal{D})$, then the infimum of $u$ is not attained in $\mathcal{D}$ unless $u$ is constant on $\mathcal{D}$.
\item (Comparison principle)
If $u \in \mathfrak{S}^+(\mathcal{D})$ and $\liminf_{y \to x,\; y\in \mathcal{D}} u(y) \ge 0$ for every $x \in \partial \mathcal{D}$, then $u \ge 0$ in $\mathcal{D}$.
\item
If $u \in \mathfrak{S}^+(\mathcal{D})$, then for every $x_0 \in \mathcal{D}$, we have $\liminf_{x \to x_0} u(x)=u(x_0)$.
\item
If $u$, $v \in \mathfrak{S}^+(\mathcal{D})$, and $c>0$, then $cu$, $u+v$, and $\min(u,v) \in \mathfrak{S}^+(\mathcal{D})$.
\item
(Pasting lemma)
For $v \in \mathfrak{S}^+(\mathcal{D}')$, where $\mathcal{D}'$ is an open subset of $\mathcal D$, define $w$ by
\[
w= \begin{cases}
\min(u,v)& \text{in }\;\mathcal{D}',\\
\phantom{\min}u &  \text{in }\; \mathcal{D} \setminus \mathcal{D}'.
\end{cases}
\]
If $w$ is lower semicontinuous in $\partial \mathcal{D}'$, then $w \in \mathfrak{S}^+(\mathcal{D})$.
\end{enumerate}
\end{lemma}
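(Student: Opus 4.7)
My plan is to treat the five parts in the order given, so that later parts may freely invoke the earlier ones.

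For (a), I would argue by showing the level set $E := \{x \in \mathcal{D} : u(x) = m\}$ (with $m = \inf_{\mathcal{D}} u > -\infty$) is both open and closed, so that connectedness of $\mathcal{D}$ forces $E = \mathcal{D}$. Closedness is immediate from lower semicontinuity. For openness I fix $x_0 \in E$ and a ball $B = B_r(x_0) \Subset \mathcal{D}$; the supersolution inequality (iv) yields $m = u(x_0) \ge \int_{\partial B} u \, d\omega^{x_0}_B$, and combined with $u \ge m$ on $\partial B$ this forces the l.s.c.\ function $u - m$ to vanish $\omega^{x_0}_B$-almost everywhere, hence identically on $\partial B$ once one knows $\omega^{x_0}_B$ charges every non-empty open subset of $\partial B$ (a standard consequence of strict positivity of solutions with non-negative non-trivial boundary data, via \cite[Theorem 1.1]{Safonov} applied to the solution in Lemma \ref{lem0114mon}). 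Part (b) then reduces to (a): extend $u$ to $\overline{\mathcal{D}}$ by $\tilde u(x) := \liminf_{y \to x,\,y \in \mathcal{D}} u(y) \ge 0$ on $\partial\mathcal{D}$; the extension is l.s.c.\ on the compact set $\overline{\mathcal{D}}$, so attains its infimum; if the infimum were negative it would be attained at an interior point (since it is non-negative on $\partial\mathcal{D}$), and applying (a) on the connected component containing that point would force $u$ constant and negative, contradicting the boundary hypothesis.

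For (c), lower semicontinuity gives $u(x_0) \le \liminf_{x \to x_0} u(x) =: \alpha$ for free. For the reverse inequality, fix $\epsilon > 0$ and choose $r > 0$ with $u \ge \alpha - \epsilon$ on $\overline{B_r(x_0)} \setminus \{x_0\}$; applying the supersolution inequality on $B = B_r(x_0)$ and using that $\omega^{x_0}_B$ is a probability measure on $\partial B$ yields $u(x_0) \ge \int_{\partial B} u\,d\omega^{x_0}_B \ge \alpha - \epsilon$, and $\epsilon \downarrow 0$ concludes. Part (d) is routine: for $cu$ and $u+v$ both the l.s.c.\ property and the mean-value inequality respect positive linear combinations because $\omega^x_B$ is a positive measure; for $\min(u,v)$, l.s.c.\ is preserved under minima, and the mean-value inequality follows from the two-line estimate $\int_{\partial B} \min(u,v)\,d\omega^x_B \le \min\bigl(\int_{\partial B} u\,d\omega^x_B,\, \int_{\partial B} v\,d\omega^x_B\bigr) \le \min(u(x), v(x))$.

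The main obstacle is part (e). Verifying lower semicontinuity of $w$ on $\mathcal{D}$ is easy: on $\mathcal{D}'$ it is the minimum of two l.s.c.\ functions, on $\mathcal{D} \setminus \overline{\mathcal{D}'}$ it coincides with $u$, and on $\partial\mathcal{D}' \cap \mathcal{D}$ it is l.s.c.\ by hypothesis. The mean-value inequality $\int_{\partial B} w\,d\omega^x_B \le w(x)$ for $B \Subset \mathcal{D}$ and $x \in B$ is the substantial step. The easy half is the bound by $u(x)$: since $w \le u$ throughout $\mathcal{D}$, $\int_{\partial B} w\,d\omega^x_B \le \int_{\partial B} u\,d\omega^x_B \le u(x)$, which already handles the case $x \in \mathcal{D}\setminus\mathcal{D}'$. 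The hard case is $x \in B \cap \mathcal{D}'$ with $B$ possibly not contained in $\mathcal{D}'$; I need the matching bound $\int_{\partial B} w\,d\omega^x_B \le v(x)$. My plan is to use Baire's theorem to write $w|_{\partial B} = \sup_n g_n$ for an increasing sequence of continuous $g_n \le w$, solve the Dirichlet problem $Lh_n = 0$ in $B$ with $h_n = g_n$ on $\partial B$ using Lemma \ref{lem0114mon} (so $h_n(x) = \int_{\partial B} g_n\,d\omega^x_B$), and prove $h_n \le v$ on $B \cap \mathcal{D}'$ via the comparison principle (b) already established.

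To set up that comparison, on $\partial(B \cap \mathcal{D}')$ split into $\partial B \cap \overline{\mathcal{D}'}$ and $B \cap \partial \mathcal{D}'$; on the first piece $h_n = g_n \le w \le v$ by continuity of $h_n$ and the fact that $w = \min(u,v) \le v$ on $\mathcal{D}'$; on the second piece the l.s.c.\ hypothesis on $w$ at $\partial\mathcal{D}'$ together with $w = u$ there gives $\liminf_{z \to y,\,z \in \mathcal{D}'} v(z) \ge \liminf_{z \to y,\,z \in \mathcal{D}'} w(z) \ge u(y)$, while the comparison principle (b) applied globally on $B$ to $u - h_n$ (whose boundary values satisfy $u \ge w \ge g_n$ on $\partial B$) gives $h_n \le u$ on $\overline B$, so $u(y) \ge h_n(y)$. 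Hence $\liminf(v - h_n) \ge 0$ along $\partial(B \cap \mathcal{D}')$, and since $v - h_n$ is a supersolution on $B \cap \mathcal{D}'$ (as $v \in \mathfrak{S}^+(\mathcal{D}')$ and $h_n$ is a classical solution), (b) yields $v \ge h_n$ on $B \cap \mathcal{D}'$. Evaluating at $x$ and letting $n \to \infty$ via monotone convergence gives $\int_{\partial B} w\,d\omega^x_B \le v(x)$, and combining with the bound by $u(x)$ completes the proof.
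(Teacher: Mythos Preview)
Your arguments for (a)--(d) are correct and parallel the paper's closely. For (e), your approach is genuinely different from the paper's and arguably more direct. The paper first replaces $v$ by $\min(u,v)$ and verifies the mean-value inequality only for \emph{restricted} balls (balls $B'\Subset\mathcal{D}'$ when $x\in\mathcal{D}'$, arbitrary $B\Subset\mathcal{D}$ when $x\notin\mathcal{D}'$); it then re-runs the proof of (a) with this restricted class of balls to obtain a strong minimum principle for $w$ directly, and finally bootstraps to the full mean-value inequality for all $B\Subset\mathcal{D}$ via a Baire approximation $h_n\nearrow w$ on $\partial B$ together with the observation that $w-w_n$ inherits the same restricted strong minimum principle. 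You instead attack the full inequality head-on by applying the already-proved comparison principle (b) on the sub-domain $B\cap\mathcal{D}'$, which is cleaner and avoids repeating (a). The trade-off is that your boundary analysis must be a bit more careful.

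One small repair is needed: your decomposition $\partial(B\cap\mathcal{D}')\subset(\partial B\cap\overline{\mathcal{D}'})\cup(B\cap\partial\mathcal{D}')$ is correct, but on the first piece you invoke ``$w=\min(u,v)\le v$ on $\mathcal{D}'$'', which does not cover points $y\in\partial B\cap\partial\mathcal{D}'$ (there $y\notin\mathcal{D}'$, so $w(y)=u(y)$ and $v(y)$ need not be defined). Fortunately your ``second piece'' argument---using the lower-semicontinuity hypothesis on $w$ at $\partial\mathcal{D}'$ to get $\liminf_{z\to y,\,z\in\mathcal{D}'}v(z)\ge u(y)$, combined with $h_n\le u$ on $\overline B$---applies verbatim to any $y\in\overline{B}\cap\partial\mathcal{D}'$, so the missing case $y\in\partial B\cap\partial\mathcal{D}'$ is handled by that same reasoning.
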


\begin{proof}
\begin{enumerate}[leftmargin=*]
\item
For any ball $B \subset \mathcal{D}$ and a function$f \in C(\partial B)$, consider
\[
v(x)=\int_{\partial B} f\,d\omega_{B}^x,\quad x \in B.
\]
We claim that if $f\ge 0$ and $f(x_0)>0$ for some $x_0 \in \partial B$, then $v>0$ in $B$.
Indeed, it follows from Lemma \ref{lem_solution} that $v \in W^{2,d}_{\rm loc}(B) \cap C(\overline B)$.
It is evident that $v \ge 0$ in $B$ and by continuity $v>0$ near $x_0$.
Then by the Harnack inequality as given in \cite[Theorem 3.1]{Safonov}, we conclude that $v >0$ in $B$ as claimed.

Suppose there is a point $x_0 \in \mathcal{D}$ such that $u(x_0)=\inf_{\mathcal{D}}u$.
Since $u>-\infty$ and $u$ is finite at some point in $\mathcal{D}$, we have $-\infty < u(x_0)=\inf_{\mathcal{D}}u<+\infty$.
Let $K=\set{x \in \mathcal{D}: u(x)=\inf_{\mathcal{D}}u}$, which is a relatively closed in $\mathcal{D}$ by lower semicontinuity of $u$.
We will show that $K$ is also open.
For any $y \in K$, there is $r>0$ such that $B_r(y) \subset \mathcal{D}$.
Suppose there is a point $y' \in B_r(y)\setminus K$. Let $\rho=\abs{y'-y}<r$.
Since $y \in K$, we have
\begin{equation}			\label{eq1113sun}
u(y)=\int_{\partial B_\rho(y)}\left(\inf_{\mathcal{D}}u\right) \,d\omega_{B_\rho(y)}^y(z) \le \int_{\partial B_\rho(y)} u(z) \,d\omega_{B_\rho(y)}^y(z) \le u(y).
\end{equation}
Consequently, we have
\[
\int_{\partial B_\rho(y)} (u(z)-u(y))\,d\omega_{B_\rho(y)}^y(z) =0.
\]
Since $u(y') > u(y)$ and $u$ is lower semicontinuous, there exists a continuous function $f$ such that $0\le f(z) \le u(z)-u(y)$ for every $z \in \partial B_\rho(y)$ and $f(y')>0$.
Define
\[
v(x)=\int_{\partial B_\rho(y)} f\,d\omega_{B_\rho(y)}^x.
\]
Then $v(y)>0$ by the observation above, which contradicts the following:
\[
v(y) =\int_{\partial B_\rho(y)} f(z)\,d\omega_{B_\rho(y)}^y(z) \le \int_{\partial B_\rho(y)} (u(z)-u(y))\,d\omega_{B_\rho(y)}^y(z)=0.
\]
Hence, $B_r(y) \subset K$ and $K$ is open.
Consequently, by the connectedness of $\mathcal{D}$, we have $K=\mathcal{D}$, as the case $K=\emptyset$ is initially excluded.
Therefore, if $u$ attains its minimum value at a point in $\mathcal{D}$, it follows that $u$ is constant.

\item
We extend $u$ to be zero in $\overline{\mathcal{B}} \setminus \mathcal{D}$.
Thus, $u$ is lower semicontinuous in $\overline{\mathcal{B}}$ and achieves its minimum value $m$ at some point $x_0\in \overline{\mathcal{B}}$.
Let
\[
U=\set{x \in \overline{\mathcal{B}}:u(x) = m}.
\]
As $x_0 \in U$, we know $U \neq \emptyset$.
We aim to demonstrate that $U$ is relatively open in $\mathcal{B}$ if $m<0$.
Take $x \in U$. Since $u(x)=m<0$, we have $x \in \mathcal{D}$.
Consider a ball $B_r(x) \Subset \mathcal{D}$.
As $u$ is an $L$-supersolution in $B_r(x)$, by the strong minimum principle, it must be constant in $B_r(x)$, implying $u=m$ in $B_r(x)$.
This establishes that the set $U$ is a nonempty open set in $\mathcal{B}$ if $m<0$.
Furthermore, let $V = \set{x \in \overline{\mathcal{B}}: u > m}$, which is open in $\mathcal{B}$ and nonempty since $0>m$.
Given $\overline{\mathcal{B}}= U\cup V$, the assumption that $m < 0$ contradicts the connectedness of $\overline{\mathcal{B}}$.
Thus, $m \ge 0$ on $\overline{\mathcal{B}}$, and consequently, $u \ge 0$ in $\mathcal{D}$.
\item
Since $u$ is lower semicontinuous, we only need to prove
\[
\liminf_{x \to x_0} u(x)\le u(x_0).
\]
Let $B_r(x_0)\Subset \mathcal{D}$.
Then for any $0<\rho<r$, we have
\[
u(x_0) \ge \int_{\partial B_\rho(x_0)} u \,d\omega^x_{B\rho(x_0)} \ge \int_{\partial B_\rho(x_0)} \left(\inf_{B_r(x_0)\setminus \set{x_0}}u\right) \,d\omega^x_{B\rho(x_0) }=\inf_{B_r(x_0)\setminus \set{x_0}} u.
\]
Since this holds as long as $B_r(x_0)\Subset \mathcal{D}$, we have
\[
u(x_0) \ge \lim_{r\to 0} \inf_{B_r(x_0)\setminus \set{x_0}} u =\liminf_{x\to x_0} u(x).	
\]
\item
It is straightforward from the definition of $L$-supersolutions.
\item
We may assume that $v \le u$ in $\mathcal{D}'$ by replacing $v$ by $\min(u,v)$, which is an $L$-supersolution in $\mathcal{D}'$ by (d).
Also, we may assume that $\mathcal{D}$ is connected.

It is clear that $w$ is not identically $+\infty$, $w >-\infty$, and is lower semicontinuous in $\mathcal{D}$.
For any point $x \in \mathcal{D}$, consider the following two cases: $x \in \mathcal{D}'$ and $x \in \mathcal{D}\setminus \mathcal{D}'$.
If $x \in \mathcal{D}\setminus \mathcal{D}'$, then for any ball $B \Subset \mathcal{D}$ with $x\in B$, we have
\[
\int_{\partial B} w\,d\omega^x_B \le \int_{\partial B} u\,d\omega^x_B \le u(x)=w(x).
\]
If $x \in \mathcal{D}'$, then consider any ball $B'$ such that $x\in B' \Subset \mathcal{D}'$, and observe that
\[
\int_{\partial B'}w \,d\omega^x_{B'}=\int_{\partial B'}v \,d\omega^x_{B'} \le v(x) = w(x).
\]
We claim that $w$ satisfies the strong minimum principle.
Indeed, we repeat the same proof as in (a).
Let $K=\set{x \in \mathcal{D}: w(x)=\inf_{\mathcal{D}}w}$.
For $y \in K$, consider two cases: if $y \in K \cap \mathcal{D}'$ choose $r>0$ sufficiently small so that $B_r(y)\Subset \mathcal{D}'$; otherwise take any $r>0$ such that $B_r(y)\Subset \mathcal{D}$.
Suppose there is a point $y' \in B_r(y)\setminus K$. Let $\rho=\abs{y'-y}<r$.
Then, by the observation above, similar to \eqref{eq1113sun}, we obtain
\[
w(y) = \int_{\partial B_\rho(y)} w(z) \,d\omega_{B_\rho(y)}^y(z).
\]
By the same reasoning as in the proof of (a), this implies that $K=\mathcal{D}$.
Thus, $w$ satisfies the strong minimum principle in $\mathcal{D}$.

Next, consider any ball $B \Subset \mathcal{D}$ and $x \in B$.
Choose a sequence $h_n \in C(\partial B)$ such that $h_n \nearrow w$ on $\partial B$.
Let $w_n(x):=\int_{\partial B} h_n \,d\omega^x_B$.
Then, $w-w_n$ satisfies the strong minimum principle in $B$ by the same reasoning.
Consequently, we have $w_n \le w$ in $B$.
Taking the limit as $n\to \infty$, we deduce that
\[
\tilde w(x):=\int_{\partial B} w \,d\omega^x_B \le w(x).
\]
This confirms that $w$ is an $L$-supersolution in $\mathcal{D}$.\qedhere
\end{enumerate}
\end{proof}

\subsection{Proof of Lemma~\ref{lem1033thu}}
\begin{lemma}[Lemma \ref{lem1033thu}]			
The following properties hold:
\begin{enumerate}
\item
$\mathscr{E}_{B}u \le u$.
\item
$\mathscr{E}_{B}u$ is an $L$-supersolution in $\mathcal{D}$.
\item
$\mathscr{E}_{B}u$ is an $L$-solution in $B$.
\end{enumerate}
\end{lemma}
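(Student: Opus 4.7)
The plan is to prove the three assertions in the order \textbf{(a)} $\to$ \textbf{(c)} $\to$ \textbf{(b)}, reducing (b) to a clean application of the pasting lemma (Lemma \ref{lem2334sun}(e)) once (c) is in hand. Part (a) is essentially immediate: on $\mathcal{D}\setminus B$ there is nothing to check, and on $B$ the inequality $\mathscr{E}_B u(x) = \int_{\partial B} u\,d\omega^x_B \le u(x)$ is exactly condition (iv) of Definition \ref{def_supersol} applied to $u$ in the ball $B \Subset \mathcal{D}$ (the integral is well defined because $u$ is l.s.c., hence bounded below, on the compact set $\partial B$).

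For (c), I would invoke Baire's theorem to produce an increasing sequence $\{h_n\} \subset C(\partial B)$ with $h_n \nearrow u|_{\partial B}$, and then apply Lemma \ref{lem0114mon} to obtain $v_n \in W^{2,p_0}_{\rm loc}(B)\cap C(\overline B)$ solving $Lv_n = 0$ in $B$ with $v_n = h_n$ on $\partial B$. By definition of $\omega^x_B$, $v_n(x) = \int_{\partial B} h_n\,d\omega^x_B$, and monotone convergence gives $v_n(x) \nearrow \mathscr{E}_B u(x)$ for every $x\in B$. For any sub-ball $B' \Subset B$ and $x\in B'$, each $v_n$ satisfies the mean-value identity $v_n(x) = \int_{\partial B'} v_n\,d\omega^x_{B'}$, and a second pass of monotone convergence hands us the same identity for $\mathscr{E}_B u$. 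Combined with l.s.c.\ of $\mathscr{E}_B u$ in $B$ (as the pointwise supremum of continuous $v_n$'s) and finiteness at any point where some $v_n$ is finite, Lemma \ref{lem_solution} then identifies $\mathscr{E}_B u$ as an $L$-solution in $B$.

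For (b), the key preliminary is lower semicontinuity of $\mathscr{E}_B u$ on all of $\mathcal{D}$, and the only non-trivial case is a boundary point $x_0 \in \partial B$. Here I would use that each $v_n$ from the construction above is continuous on $\overline B$ with $v_n(x_0) = h_n(x_0)$, so $\liminf_{x\to x_0,\,x\in B}\mathscr{E}_B u(x) \ge \lim_{x\to x_0,\,x\in B} v_n(x) = h_n(x_0)$; letting $n\to\infty$ yields $\ge u(x_0) = \mathscr{E}_B u(x_0)$. Approaching $x_0$ from $\mathcal{D}\setminus B$ uses only that $u$ itself is l.s.c. With l.s.c.\ in place, I would apply the pasting lemma with $\mathcal{D}' = B$ and $v := \mathscr{E}_B u|_B$: by (c), $v \in \mathfrak{S}^+(B)$, and by (a), $\min(u,v) = v$ on $B$, so the pasted function is precisely $\mathscr{E}_B u$, which then lies in $\mathfrak{S}^+(\mathcal{D})$. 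The main obstacle, as expected, is the l.s.c.\ check across $\partial B$; everything else is bookkeeping with monotone convergence and the continuity-up-to-the-boundary supplied by Lemma \ref{lem0114mon}.
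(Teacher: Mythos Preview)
Your overall structure and the arguments for (a) and (b) are correct and essentially match the paper's approach: (a) is immediate, and (b) reduces to the lower-semicontinuity check at $\partial B$ followed by the pasting lemma, exactly as you describe.

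The gap is in (c). You correctly build $v_n \nearrow \mathscr{E}_B u$ with each $v_n$ an $L$-solution, and monotone convergence does hand you the mean-value \emph{equality}
\[
\mathscr{E}_B u(x)=\int_{\partial B'}\mathscr{E}_B u\,d\omega^x_{B'}
\]
for every $B'\Subset B$. But this, together with lower semicontinuity, only shows $\mathscr{E}_B u$ is an $L$-\emph{supersolution} in $B$. To be an $L$-solution (Definition~\ref{def_supersol}), $\mathscr{E}_B u$ must also be an $L$-subsolution, which requires \emph{upper} semicontinuity and $\mathscr{E}_B u<+\infty$ everywhere. Neither follows from ``sup of continuous functions'' or from ``finiteness at any point where some $v_n$ is finite'' (every $v_n$ is finite everywhere, yet $\sup_n v_n$ can still equal $+\infty$). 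So your invocation of Lemma~\ref{lem_solution} at the end is unjustified: you have not verified the $L$-subsolution half of the definition, nor have you produced $W^{2,p_0}_{\rm loc}$ regularity directly.

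The paper closes exactly this gap with a Harnack convergence principle (stated and proved there as Lemma~\ref{lem_harnack_principle}): an increasing pointwise limit of $L$-solutions is either identically $+\infty$ or again an $L$-solution. The proof uses the Harnack inequality and interior $W^{2,p_0}$ estimates to get local uniform bounds on $\{v_n\}$, hence $W^{2,p_0}_{\rm loc}$ convergence and continuity of the limit. Once you insert this step (ruling out $\equiv+\infty$ via $\mathscr{E}_B u\le u$), your argument for (c) is complete, and the rest of your write-up goes through unchanged.
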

\begin{proof}
We will make use of the following lemma in the proof.
\begin{lemma}			\label{lem_harnack_principle}
Assume Conditions \ref{cond1} and \ref{cond2} hold, with $c = 0$.
Let $\set{u_n}$ be an increasing sequence of  $L$-solutions in $\mathcal{D}$, and let $u := \lim_{n \to\infty} u_n$.
Then, $u \equiv +\infty$ or $u$ is an $L$-solution in $\mathcal{D}$.
\end{lemma}
\begin{proof}
We may assume that $u_n \ge 0$ by replacing $u_n$ by $u_n-u_1$.
Suppose that there exists a point $x \in \mathcal{D}$ for which $u(x)$ is finite.
Let $B_{16r} \Subset \mathcal{D}$ be such that $x \in B_{16r}$.
We use \cite[Theorem 3.1]{Safonov} to deduce that
\[
u_n(y) \le \sup_{B_{2r}} u_n \le C \inf_{B_r}u_n \le  C u_n(x) \le  C u(x),\quad \forall  y \in B_{2r},\;\;n=1,2,\ldots.
\]
Moreover, according to Lemma~\ref{lem_solution}, $u_n \in W^{2,p_0}_{\rm loc}(\mathcal{D})$, and thus, similar to \eqref{eq1057thu}, we have
\[
\norm{u_n}_{W^{2,p_0}(B_{r})} \le C \norm{u_n}_{L^{p_0}(B_{2r})} \le C u(x).
\]
This implies that $u \in W^{2,p_0}(B_{r})$ as well, and $u$ is continuous in $\overline B_r$ by the Sobolev inequality.
Since $u_n$ is an $L$-solution in $\mathcal{D}$, it follows from Lemma \ref{lem_solution} that
\[
u_n(x) = \int_{\partial B_r} u_n\,d\omega^x_{B_r}, \quad \forall  x \in B_r.
\]
By taking the limit $n\to \infty$, invoking the monotone convergence theorem, and repeating the proof of Lemma \ref{lem_solution}, we conclude that $Lu=0$ a.e. in $B_r$.
Finally, by using a chain of balls, we deduce that $u$ is an $L$-solution in $\mathcal{D}$.
\end{proof}

\begin{enumerate}[leftmargin=*]
\item
It is clear from the definition.
\item
By the pasting lemma, it suffices to show that $\mathscr{E}_{B}u$ is lower semicontinuous on $\partial B$.
Let $x_0 \in \partial B$.
From the definition of $\mathscr{E}_{B}u$, it is enough to show that
\begin{equation}			\label{eq1137wed}
u(x_0)\le \liminf_{x \to x_0,\; x \in B}\int_{\partial B} u \ d\omega^x_{B}.
\end{equation}
Choose a sequence $h_n \in C(\partial B)$ such that $h_n \nearrow u$ on $\partial B$.
Then, we have
 \[
 h_n(x_0) = \lim_{x \to x_0,\; x \in B}\int_{\partial B} h_n \ d\omega^x_{B} =\liminf_{x \to x_0,\; x \in B} \int_{\partial B} h_n \ d\omega^x_{B} \le \liminf_{x \to x_0,\; x \in B}\int_{\partial B} u \ d\omega^x_{B}.
 \]
By taking the limit $n\to\infty$, we obtain \eqref{eq1137wed}.
\item
Consider a sequence $h_n \in C(\partial B)$ such that $h_n \nearrow u$ on $\partial B$.
Define
\[
v_n(x)=\int_{\partial B} h_n \ d\omega^x_{B}\quad\text{and}\quad v(x)=\int_{\partial B} u \ d\omega^x_{B}.
\]
By Lemma~\ref{lem_solution} and \cite[Theorem 1.1]{Safonov}, it follows that  $v_n \in W^{2,p_0}_{\rm loc}(B)\cap C(\overline B)$ forms an increasing sequence of $L$-solutions in $B$.
As $v_1(x)$ is finite for $x \in B$, the monotone convergence theorem implies that $v(x)=\lim_{n \to \infty} v_n(x)$ for $x \in B$.
Since $u$ is an $L$-supersolution, we have $v \leq u$ in $B$, thus $v$ is not identically equal to $+\infty$ in $B$. Therefore, by Lemma \ref{lem_harnack_principle}, we have $L v=0$ in $B$.\qedhere
\end{enumerate}
\end{proof}

\subsection{Proof of Lemma \ref{lem1034thu}}
\begin{lemma}[Lemma \ref{lem1034thu}]			
Let $\mathscr{F}$ be a saturated collection of $L$-supersolutions in $\mathcal{D}$, and let
\[
v(x) := \inf_{u \in \mathscr{F}} u(x).
\]
If $v$ does not take the value $-\infty$ in $\mathcal{D}$, then $Lv=0$ in $\mathcal{D}$.
\end{lemma}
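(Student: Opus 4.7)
The plan is to localize to an arbitrary ball $B \Subset \mathcal{D}$ and exhibit, on $B$, an $L$-solution $V$ that coincides with $v$ pointwise. Fix such a ball and a countable dense set $\{x_k\}_{k \ge 1} \subset B$. For each $k$, choose $\{u^k_n\}_{n \ge 1} \subset \mathscr{F}$ with $u^k_n(x_k) \to v(x_k)$ as $n \to \infty$. By property (i) of saturation, the function $w_n := \min(u^1_n, u^2_n, \ldots, u^n_n)$ belongs to $\mathscr{F}$, and for every fixed $k$ and every $n \ge k$ one has $v(x_k) \le w_n(x_k) \le u^k_n(x_k)$, so $w_n(x_k) \to v(x_k)$.

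Next, use property (ii) to pass to $L$-solutions on $B$. Set $W_1 := \mathscr{E}_B w_1$ and, recursively, $W_n := \mathscr{E}_B(\min(W_{n-1}, w_n))$ for $n \ge 2$. By saturation each $W_n \in \mathscr F$; by Lemma~\ref{lem1033thu}(c) each $W_n$ is an $L$-solution in $B$; by Lemma~\ref{lem1033thu}(a) the sequence $\{W_n\}$ is decreasing; and since $W_n \in \mathscr F$ we have $W_n \ge v$ throughout $\mathcal{D}$. From $v(x_k) \le W_n(x_k) \le w_n(x_k) \to v(x_k)$ we get $W_n(x_k) \to v(x_k)$ for every $k$. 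The differences $W_1 - W_n$ form an increasing sequence of nonnegative $L$-solutions on $B$; because $v(x_1) > -\infty$, the pointwise limit is not identically $+\infty$, so by Lemma~\ref{lem_harnack_principle} it is an $L$-solution on $B$. Consequently $V := \lim_n W_n$ is an $L$-solution on $B$, $V \ge v$ on $B$, and $V(x_k) = v(x_k)$ for all $k$.

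The remaining step is to upgrade ``$V = v$ on a dense set'' to ``$V = v$ pointwise on $B$.'' For any $y \in B$, rerun the entire construction with the dense sequence $\{y, x_1, x_2, \ldots\}$ in place of $\{x_k\}$; this produces another $L$-solution $V_y$ on $B$ satisfying $V_y \ge v$, $V_y(y) = v(y)$, and $V_y(x_k) = v(x_k)$ for every $k$. Since both $V$ and $V_y$ are $L$-solutions on $B$, hence continuous by Lemma~\ref{lem_solution}, and they coincide on the dense set $\{x_k\}$, we conclude $V_y \equiv V$ on $B$, so $V(y) = v(y)$. As $y \in B$ was arbitrary, $v = V$ on $B$, so $v$ is an $L$-solution on $B$ and in particular $Lv = 0$ a.e.\ on $B$; since $B \Subset \mathcal{D}$ was arbitrary, $Lv = 0$ in $\mathcal{D}$.

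The main obstacle is the one just addressed: $v$ is defined as an infimum of lower semicontinuous functions and is a priori neither continuous nor even lower semicontinuous, so one cannot invoke the comparison principle directly to conclude $V = v$ from agreement on a dense set. The diagonalization trick of inserting the test point $y$ into the dense set is the device that bypasses this, using only the saturation axioms and the Harnack-type convergence lemma for monotone sequences of $L$-solutions.
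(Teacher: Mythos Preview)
Your argument is correct and follows a genuinely different route from the paper's. The paper invokes Choquet's topological lemma to extract a countable subfamily $\{u_n\}\subset\mathscr{F}$ whose pointwise infimum $u$ has the same lower semicontinuous regularization as $v$; after applying the lowerings $\mathscr{E}_B u_n$ and passing to the limit $\tilde u$, the chain $\hat v\le v\le\tilde u\le\hat u=\hat v$ forces $v=\tilde u$ on $B$ in one stroke. Your proof avoids Choquet entirely, replacing it by an explicit diagonalization over a countable dense set together with the ``insert the test point $y$'' trick to upgrade dense agreement to pointwise agreement. The trade-off is that your approach is more self-contained and elementary (no external topological lemma), while the paper's is shorter once Choquet is taken for granted and makes transparent why the infimum is automatically lower semicontinuous---a fact your proof establishes only a posteriori, point by point.
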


\begin{proof}
We recall  a lemma by Choquet (see, for example, \cite[p. 169]{CC72}) that asserts the existence of a sequence of functions $\set{u_n}$ in $\mathscr{F}$ such that $u := \inf_{n} u_n$ satisfies $\hat{u}=\hat{v}$, where $\hat{u}$ and $\hat{v}$ are the lower semicontinuous regularizations of $u$ and $v$, respectively, as defined in Definition \ref{def2249sat}.

Suppose $v>-\infty$ in $\mathcal{D}$.
Then, the following inequalities holds in $B \Subset \mathcal{D}$:
\begin{equation}			\label{eq1103sun}
\hat{v}\leq v \leq  \mathscr{E}_{B}u_n \le u_n.
\end{equation}
Since $\set{\mathscr{E}_{B}u_n}$ is a decreasing sequence, by Lemma \ref{lem_harnack_principle}, its limit
\[
\tilde{u} := \lim_{n \to \infty} \mathscr{E}_{B}u_n
\]
is an $L$-solution in $B$.
By taking the limit in \eqref{eq1103sun} and then taking the lower semicontinuous regularization, we deduce that $\hat{v} \le v \le \tilde{u} \le \hat{u}$ in $B$.
Therefore, we conclude that $v=\tilde u$ in $B$, and thus we have $Lv=0$ in $B$.
Since $B \Subset \mathcal{D}$ is arbitrary, the proof is completed.
\end{proof}

\subsection{Proof of Lemma~\ref{lem1655sun}}
\begin{lemma}[Lemma \ref{lem1655sun}]			
The followings are true:
\begin{enumerate}[leftmargin=*]
\item
$\hat u \le u$.
\item
$\hat u$ is lower semicontinuous .
\item
If $v$ is a lower semicontinuous function satisfying $v \le u$, then $v \le \hat u$.
\item
If $u$ is lower semicontinuous, then $\hat u=u$.
\end{enumerate}
\end{lemma}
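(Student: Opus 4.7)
The plan is to argue all four parts directly from the definition
\[
\hat u(x)=\sup_{r>0} \Bigl( \inf_{\mathcal{D} \cap B_r(x)} u\Bigr),
\]
unwinding suprema and infima; no auxiliary machinery beyond the definitions of lower semicontinuity and inf/sup is needed. Part (a) is immediate: since $x \in \mathcal{D} \cap B_r(x)$ for every $r>0$, the inner infimum is bounded above by $u(x)$, and taking the sup over $r$ preserves this bound.

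For (b), I would verify the open-sublevel-set characterization of lower semicontinuity. Fix $c \in \bR$ and suppose $\hat u(x_0) > c$. By the definition of the supremum there exists $r > 0$ with $\inf_{\mathcal{D}\cap B_r(x_0)} u > c$. For any $y \in B_{r/2}(x_0)$, the inclusion $B_{r/2}(y) \subset B_r(x_0)$ gives
\[
\inf_{\mathcal{D}\cap B_{r/2}(y)} u \;\ge\; \inf_{\mathcal{D}\cap B_r(x_0)} u \;>\; c,
\]
hence $\hat u(y) > c$; this shows $\set{\hat u > c}$ is open.

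For (c), suppose $v$ is lower semicontinuous with $v \le u$. Given $x_0 \in \mathcal{D}$ and $\epsilon > 0$, lower semicontinuity of $v$ furnishes $r > 0$ such that $v(y) > v(x_0) - \epsilon$ for every $y \in \mathcal{D} \cap B_r(x_0)$. Combined with $v \le u$ this gives $\inf_{\mathcal{D}\cap B_r(x_0)} u \ge v(x_0) - \epsilon$, whence $\hat u(x_0) \ge v(x_0) - \epsilon$; letting $\epsilon \to 0$ yields $\hat u \ge v$. Finally, (d) drops out immediately: (a) gives $\hat u \le u$, and applying (c) with the choice $v = u$ (which is permitted since $u$ is lower semicontinuous) yields the reverse inequality $u \le \hat u$.

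There is no substantive obstacle here, as every assertion is a formal consequence of the definition; the only point requiring care is the choice of the smaller radius $r/2$ in (b) to ensure the inner ball lies inside the outer one, and the observation that the infimum is taken over $\mathcal{D} \cap B_r$ rather than over $B_r$, so one must respect this restriction when comparing infima.
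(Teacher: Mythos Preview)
Your proof is correct and follows essentially the same approach as the paper's, unwinding the definition of $\hat u$ via elementary sup/inf manipulations. The only cosmetic differences are that the paper phrases (b) through the $\liminf$ characterization of lower semicontinuity (using radii $2r$ and $r$ instead of your $r$ and $r/2$) and routes (c) through the monotonicity $\hat v \le \hat u$ together with $v \le \hat v$, whereas you argue (c) directly; both variants are equivalent.
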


\begin{proof}
\begin{enumerate}[leftmargin=*]
\item
It is clear from the definition.
\item
Let $r>0$. For any $y \in \mathcal{D} \cap B_r(x)\setminus \set{x}$, we have
\[
\inf_{\mathcal D \cap B_{2r}(x)} u  \le \inf_{\mathcal D \cap B_r(y)} u \le \hat u(y).
\]
By taking the infimum over $y \in \mathcal{D} \cap B_r(x)\setminus \set{x}$, we deduce that
\[
\inf_{\mathcal D \cap B_{2r}(x)} u \le \inf_{\mathcal D \cap B_r(x)\setminus \set{x}} \hat u(y).
\]
By taking the supremum over $r>0$, we conclude that $\hat u(x) \le \liminf_{y \to x} \hat u(y)$.
\item
Let $v$ be a lower semicontinuous function satisfying $v \le u$ in $\mathcal{D}$.
It is clear that $\hat v \le \hat u$.
From the definition of lower semicontinuity, we have
\[
v(x) \le \min \left(v(x),\, \sup_{r>0} \inf_{B_r(x) \setminus \set{x}} v\right)=\sup_{r>0}\inf_{B_r(x)} v=\hat v(x).
\]
\item
It follows from (a) and (c).\qedhere
\end{enumerate}
\end{proof}

\subsection{Proof of Lemma~\ref{lem1013sat}}
\begin{lemma}[Lemma \ref{lem1013sat}]				
The following properties hold:
\begin{enumerate}[leftmargin=*]
\item
$0\le \hat u_E \le u_E \le 1$.
\item
$u_E=1$ in $E$ and $\hat u_E =1$ in $\intr (E)$.
\item
$\hat u_E$ is a potential.
\item
$u_E=\hat u_E$ in $\mathcal{B}\setminus \overline{E}$ and
$L u_E=L \hat u_E=0$ in $\mathcal{B}\setminus \overline{E}$.
\item
If $x_0\in \partial E$, we have
\[
\liminf_{x\to x_0} \hat u_{E}(x) =\liminf_{x\to x_0,\; x\in \mathcal{B}\setminus  E} \hat u_E(x).
\]
\end{enumerate}
\end{lemma}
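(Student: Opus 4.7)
I would dispatch items (a)--(e) sequentially, using the preceding appendix lemmas together with the constant test function as basic inputs.

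Items (a) and (b) are immediate. The constant $1$ belongs to the admissible family $\mathscr{F}$ (since $c \equiv 0$ gives $L 1 \equiv 0$ and $1 \ge 1$ on $E$), so $u_E \le 1$; any $v \in \mathscr{F}$ is nonnegative, giving $u_E \ge 0$; Lemma~\ref{lem1655sun}(a) then yields $\hat u_E \le u_E$. Membership in $\mathscr{F}$ forces $u_E = 1$ on $E$, and every ball centered at $x \in \intr E$ lies inside $E$ for small radii, so $\hat u_E(x) = 1$.

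For (c), the core is showing $\hat u_E$ is an $L$-supersolution. I would apply Choquet's lemma to obtain $\{v_n\} \subset \mathscr{F}$ whose pointwise infimum has lower semicontinuous regularization equal to $\hat u_E$, then replace $v_n$ by $w_n := \min(v_1,\ldots,v_n)$, which remains in $\mathscr{F}$ by Lemma~\ref{lem2334sun}(d) and decreases to some $w \in [0,1]$. For any ball $B \Subset \mathcal{B}$ and $x \in B$, dominated convergence in the inequalities $w_n(x) \ge \int_{\partial B} w_n \, d\omega^x_B$ yields $w(x) \ge \int_{\partial B} w \, d\omega^x_B$. Using Baire's theorem to write $\hat u_E|_{\partial B}$ as the increasing pointwise limit of continuous functions, one sees that $\phi(x) := \int_{\partial B} \hat u_E \, d\omega^x_B$ is lsc in $B$; combining $\phi \le \int_{\partial B} w\,d\omega^x_B \le w$ with the lsc comparison from Lemma~\ref{lem1655sun}(c) upgrades this to $\phi \le \hat u_E$ in $B$, which is exactly the mean value inequality for $\hat u_E$. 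For continuous vanishing on $\partial \mathcal{B}$, I would invoke \cite[Theorem 4.2]{Krylov21} to solve $L\eta = -1$ in $\mathcal{B}$ with $\eta = 0$ on $\partial \mathcal{B}$; the strong minimum principle forces $\eta > 0$ in $\mathcal{B}$, so $M\eta \in \mathscr{F}$ for $M := 1/\min_E \eta$ and $0 \le \hat u_E \le M\eta$ vanishes continuously at every boundary point.

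For (d), set $\mathcal{D} := \mathcal{B} \setminus \overline E$; the restriction of $\mathscr{F}$ to $\mathcal{D}$ is saturated because for $B \Subset \mathcal{D}$ the global lowering $\mathscr{E}_B v$ equals $v$ on $E \subset \mathcal{B} \setminus B$, preserving the constraint $v \ge 1$ on $E$. Lemma~\ref{lem1034thu} then gives $L u_E = 0$ in $\mathcal{D}$, so $u_E \in W^{2,p_0}_{\loc}(\mathcal{D})$ is continuous there and agrees with $\hat u_E$. For (e), when $x_0 \in \partial E$, the identities $u_E = 1$ on $E$ and $u_E \le 1$ collapse $\inf_{B_r(x_0) \cap \mathcal{B}} u_E$ to $\inf_{B_r(x_0) \cap (\mathcal{B}\setminus E)} u_E$; combined with (d), this yields $\hat u_E(x_0) = \liminf_{y \to x_0,\, y \in \mathcal{B}\setminus E} \hat u_E(y)$, and the lsc of $\hat u_E$ supplies the sandwich
\[
\hat u_E(x_0) \le \liminf_{y \to x_0} \hat u_E(y) \le \liminf_{y \to x_0,\, y \in \mathcal{B}\setminus E} \hat u_E(y) = \hat u_E(x_0),
\]
forcing equality throughout. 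The principal obstacle is the supersolution half of (c): the pointwise infimum over $\mathscr{F}$ need not itself be lsc, so the mean value inequality must be transported from the Choquet limit $w$ to its regularization $\hat u_E$, which is the role of the lsc comparison in Lemma~\ref{lem1655sun}(c).
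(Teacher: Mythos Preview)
Your argument is correct in outline, but it diverges from the paper in part (c) and carries one small gap worth noting.

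For the supersolution property of $\hat u_E$, the paper's route is shorter: it first records the elementary inequality
\[
\int_{\partial B} u_E\,d\omega_B^x \le u_E(x),
\]
obtained by bounding $u_E$ above by any $v\in\mathscr{F}$ on $\partial B$ and then taking the infimum over $v$. With this in hand, one takes continuous $h_n\nearrow \hat u_E$ on $\partial B$, observes that $u_n(x):=\int_{\partial B}h_n\,d\omega_B^x\le u_E(x)$, and since each $u_n$ is continuous, Lemma~\ref{lem1655sun}(c) gives $u_n\le\hat u_E$; monotone convergence finishes. Your Choquet-based approach also works but requires the extra passage through the decreasing sequence $w_n$ and its limit $w$, and your dominated convergence step tacitly assumes $\int_{\partial B} w_1\,d\omega_B^x<\infty$. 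This is not automatic for an arbitrary $v_1\in\mathscr{F}$; you should first replace each $v_n$ by $\min(v_n,1)\in\mathscr{F}$ (which leaves the regularized infimum unchanged) so that $w_n\le 1$ is trivially integrable.

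For the boundary vanishing, the paper uses a multiple of the Green's function $kG(\cdot,y_0)$ (available from Section~\ref{sec3}) as the dominating element of $\mathscr{F}$, whereas your barrier $M\eta$ with $L\eta=-1$ is a valid and slightly more self-contained alternative. Parts (a), (b), (d), (e) match the paper's argument essentially verbatim.
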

\begin{proof}
Let us denote
\[
\mathscr{F}= \set{ v \in \mathfrak{S}^+(\mathcal{B}):  v\ge 0\;\text{ in }\;\mathcal{B},\;\; v \ge 1 \;\text{ in }\;E},
\]
so that $u_E(x)=\inf_{v \in \mathscr{F}} v(x)$.
We aim to show that for  $B \Subset \mathcal{B}$ and $x\in B$, we have
\begin{equation}				\label{eq2159sat}
\int_{\partial B} u_E \,d\omega_{B}^x \le u_E(x).
\end{equation}
Indeed, for any $v \in \mathscr{F}$, we have
\[
\int_{\partial B} u_E \,d\omega_{B}^x \le \int_{\partial B} v\,d\omega_{B}^x \le v(x).
\]
Taking the infimum over $v \in \mathscr{F}$ yields \eqref{eq2159sat}.
\begin{enumerate}[leftmargin=*]
\item
Given that $1 \in \mathscr{F}$, it follows that $u_E \leq 1$.
Therefore, it is clear that
\[
0 \le \hat u_E \leq u_E \le 1.
\]
\item
For every $v \in \mathscr{F}$, $\min(v,1) \in \mathscr{F}$.
Since $\min(v,1) = 1$ on $E$, it implies that $u_E(x) =1$ for all $x \in E$.
Consequently, based on Definition \ref{def2249sat}, $\hat{u}_E(x)=1$ if $x \in \intr(E)$.
\item
We begin by demonstrating that $\hat u_E \in \mathfrak{S}^+(\mathcal{B})$.
Let $h_n \in C(\partial B)$ be a sequence of nonnegative functions such that $h_n \nearrow \hat u_E$ on $\partial B$.
Then, for each $n$, we have
\[
u_n(x):=\int_{\partial B} h_n \,d\omega_{B}^x \le \int_{\partial B} \hat u_E \,d\omega_{B}^x \le \int_{\partial B} u_E \,d\omega_{B}^x \le u_E(x),
\]
where the inequality \eqref{eq2159sat} is used.
Since $u_n$ is continuous on $\overline B$, we have $\hat u_n(x)=u_n(x)$ for $x \in B$.
Thus, we deduce
\[
\int_{\partial B} h_n \,d\omega_{B}^x=u_n(x) \le \hat u_E(x).
\]
Taking the limit in the above and applying the monotone convergence theorem, we obtain
\[
\int_{\partial B} \hat u_E \,d\omega_{B}^x \le \hat u_E(x),
\]
confirming that $\hat u_E$ satisfies the property (iv) in Definition~\ref{def_supersol}.
Since $\hat u_E$ is lower semicontinuous, and $0\le \hat u_E \le 1$, we conclude that $\hat u_E \in \mathfrak{S}^+(\mathcal{B})$.

Next, we demonstrate that $\hat{u}_E$ vanishes on $\partial \mathcal{B}$.
Let $y_0$ be a fixed point in $\mathcal{B}'$.
It is clear that $G(\,\cdot, y_0) \in \mathfrak{S}^+(\mathcal{B})$.
According to Theorem \ref{thm_green_function}, for a sufficiently large constant $k$, we have $kG(\,\cdot, y_0) \in \mathscr{F}$.
Since $G(\,\cdot, y_0)$ vanishes on $\partial \mathcal{B}$, it follows that $\hat{u}_E$ also vanishes on $\partial \mathcal{B}$.

\item
Note that $\mathscr{F}$ is saturated in $\mathcal{B} \setminus \overline{E}$.
By Lemma \ref{lem1034thu}, $u_E$ is an $L$-solution in $\mathcal{B}\setminus \overline{E}$.
Therefore, $u_E$ is continuous in $\mathcal{B}\setminus \overline{E}$.
This, along with Definition \ref{def2249sat}, implies that $\hat u_E = u_E$ in $\mathcal{B}\setminus \overline{E}$.

\item
It is enough to show that
\begin{equation}			\label{eq2053mon}
\liminf_{x\to x_0} \hat u_{E}(x) \ge \liminf_{x\to x_0,\; x\in \mathcal{B}\setminus  E} \hat u_E(x).
\end{equation}
For any $r>0$ such that $B_r(x_0) \Subset \mathcal{B}$, we have
\[
\inf_{x\in B_r(x_0)\setminus \set{x_0}} \hat u_E(x)= \inf_{x \in B_r(x_0)\setminus \set{x_0}} \left(\sup_{s > 0} \inf_{y\in B_s(x)} u_{E}(y) \right)\ge \inf_{x \in B_r(x_0)\setminus \set{x_0}} \left(\inf_{\abs{y-x}<s(x)} u_E(y)\right),
\]
where $s(x)=\min(\abs{x-x_0}, r-\abs{x-x_0})$.
Therefore, we have
\[
\inf_{x\in B_r(x_0)\setminus \set{x_0}} \hat u_E(x) \ge \inf_{x\in B_r(x_0)\setminus \set{x_0}} u_E(x).
\]
Since $\hat u_E \le u_E$ and $u_E=1$ on $E$, we have
\[
\inf_{x\in B_r(x_0)\setminus \set{x_0}} \hat u_E(x) = \inf_{x\in B_r(x_0)\setminus \set{x_0}} u_E(x)=\inf_{x\in (B_r(x_0)\setminus \set{x_0})\setminus E} u_E(x) \ge \inf_{x\in (B_r(x_0)\setminus \set{x_0})\setminus E} \hat u_E(x).
\]
By taking the limit $r\to 0$, we obtain \eqref{eq2053mon}. \qedhere
\end{enumerate}
\end{proof}

\subsection{Proof of Lemma~\ref{prop0800sun}}

\begin{definition}
We define an extended real-valued function $u$ as a nearly $L$-supersolution in $\mathcal{D}$ if it satisfies the following conditions:
\begin{enumerate}[leftmargin=*]
\item[(i)]
$u$ is not identically $+\infty$ in any connected component of $\mathcal{D}$.
\item[(ii)]
$u$ is locally uniformly bounded from below in $\mathcal{D}$.
\item[(ii)]
For any $B\Subset \mathcal{D}$ and $x \in B$, we have
\[
u(x) \ge \overline{\int_{\partial B}} u\,d\omega_{B}^x:=\inf \Set{\int_{\partial B} f\,d\omega_{B}^x:  f \text{ is l.s.c. on}\,\partial B,\; f\ge u\text{ on }\partial B}.
\]
\end{enumerate}
\end{definition}
Note that a nearly $L$-supersolution is not required to be measurable.
In the case when $u$ is an $L$-supersolution in $\mathcal{D}$, it is evident that
\[
\overline{\int_{\partial B}} u\,d\omega_{B}^x=\int_{\partial B} u\,d\omega_{B}^x,
\]
and thus, we find that $u$ is nearly $L$-supersolution in $\mathcal{D}$.
The following lemma shows the converse is also true if and $u$ is lower semicontinuous.

\begin{lemma}			\label{lem1020sun}
The followings are true:
\begin{enumerate}[leftmargin=*]
\item
If $u$ is a nearly $L$-supersolution in $\mathcal{D}$, then $\hat u$ is an $L$-supersolution in $\mathcal{D}$.
\item
Let $\mathscr F$ be a family of $L$-supersolutions in $\mathcal{D}$, and define
\[
u(x):= \inf_{v \in \mathscr F} v(x).
\]
If $u>-\infty$ in $\mathcal{D}$, then $u$ is a nearly $L$-supersolution in $\mathcal D$.
\item
Suppose $u$, $v$, and $w$ are nearly $L$-supersolution such that $u=v+w$.
Then, we have $\hat u=\hat v+ \hat w$.
\end{enumerate}
\end{lemma}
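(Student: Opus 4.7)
The plan is to verify each of the three parts separately, with part (a) being the technical core and part (c) harboring the main subtlety.

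\textbf{Part (a).} I check the four properties of an $L$-supersolution in Definition~\ref{def_supersol} for $\hat{u}$. The conditions ``not identically $+\infty$,'' ``$>-\infty$,'' and ``lower semicontinuous'' are immediate from $\hat{u}\le u$, the local lower-boundedness of $u$, and Lemma~\ref{lem1655sun}(b). For the supersolution inequality on a ball $B\Subset\mathcal{D}$ with $x\in B$, I approximate $\hat{u}$ from below on $\partial B$ by continuous functions $g_n\nearrow\hat{u}$ via Baire's theorem. Setting $F_n(x):=\int_{\partial B}g_n\,d\omega^x_B$ (an $L$-solution by Lemma~\ref{lem_solution}) and $F(x):=\int_{\partial B}\hat{u}\,d\omega^x_B$, Lemma~\ref{lem_harnack_principle} together with monotone convergence identify $F$ as an $L$-solution in $B$, hence continuous there. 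The observation that any l.s.c.\ $f\ge u$ on $\partial B$ automatically satisfies $f\ge\hat{u}$ yields
\[
u(x)\ge\overline{\int_{\partial B}}u\,d\omega^x_B\ge\int_{\partial B}\hat{u}\,d\omega^x_B=F(x),
\]
so $F\le u$ on $B$. Lemma~\ref{lem1655sun}(c) then gives $F\le\hat{u}$ on $B$, the required inequality.

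\textbf{Part (b).} This is a direct verification. For each $v\in\mathscr{F}$, since $v\ge u$ and $v$ is l.s.c., $v$ belongs to the defining set of $\overline{\int_{\partial B}}u\,d\omega^x_B$; combining with the $L$-supersolution inequality for $v$ and taking the infimum over $\mathscr{F}$ gives
\[
u(x)=\inf_{v\in\mathscr{F}}v(x)\ge\inf_{v\in\mathscr{F}}\int_{\partial B}v\,d\omega^x_B\ge\overline{\int_{\partial B}}u\,d\omega^x_B.
\]
The remaining conditions (not identically $+\infty$ and local lower-boundedness) follow from each $v\in\mathscr{F}$ being an $L$-supersolution together with the hypothesis $u>-\infty$.

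\textbf{Part (c).} The direction $\hat{v}+\hat{w}\le\hat{u}$ is easy: $\hat{v}+\hat{w}$ is a finite lower semicontinuous minorant of $u=v+w$, so Lemma~\ref{lem1655sun}(c) applies. The reverse inequality $\hat{u}\le\hat{v}+\hat{w}$ is the main obstacle. My plan is to first prove that for every nearly $L$-supersolution $f$, the exceptional set $\{f>\hat{f}\}$ has Lebesgue measure zero (more precisely, it is \emph{polar} in the potential-theoretic sense). This claim follows by iterating the integral inequality $f(x)\ge\int_{\partial B}\hat{f}\,d\omega^x_B$ from part (a) on shrinking balls and invoking Lebesgue differentiation. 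Applying the claim to $u$, $v$, $w$, we obtain $\hat{u}=u=v+w=\hat{v}+\hat{w}$ off a common null set. Since both $\hat{u}$ and $\hat{v}+\hat{w}$ are $L$-supersolutions (by part (a) and Lemma~\ref{lem2334sun}(d)), and because the harmonic measure $\omega^x_B$ assigns zero mass to polar subsets of $\partial B$, the spherical averages entering the supersolution inequalities for the two functions agree, forcing equality at every point. The delicate step is precisely the polar/null-set claim for nearly $L$-supersolutions, which is the principal technical hurdle.
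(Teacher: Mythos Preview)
Your arguments for parts (a) and (b) are correct and essentially match the paper's.

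For part (c), however, there is a genuine gap. You correctly obtain the easy inequality $\hat v+\hat w\le \hat u$, but your plan for the reverse inequality rests on two unproved claims: (i) that the exceptional set $\{f>\hat f\}$ is Lebesgue-null (or polar) for every nearly $L$-supersolution $f$, and (ii) that two $L$-supersolutions agreeing off such a set must coincide everywhere. Neither is established. For (i), your sketch ``iterating the integral inequality on shrinking balls and invoking Lebesgue differentiation'' does not work as stated: the inequality $f(x)\ge\int_{\partial B}\hat f\,d\omega^x_B$ involves spherical averages with respect to harmonic measure, not ball averages with respect to Lebesgue measure, and no differentiation theorem of the required type has been set up. For (ii), even granting that $\hat u=\hat v+\hat w$ a.e.\ and that the harmonic measures ignore the exceptional set, matching the spherical averages only tells you that both sides satisfy the same supersolution \emph{inequalities}; it does not pin down their pointwise values. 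You end by calling this ``the principal technical hurdle'' without resolving it, so the proof is incomplete.

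The paper avoids all of this with a direct, self-contained argument. It first shows subadditivity of the upper integral,
\[
\overline{\int_{\partial B}}(v+w)\,d\omega^x_B \le \overline{\int_{\partial B}}v\,d\omega^x_B+\overline{\int_{\partial B}}w\,d\omega^x_B,
\]
and then proves the key identity
\[
\hat u(x)=\sup\Set{\,\overline{\int_{\partial B}}u\,d\omega^x_B:\ B\Subset\mathcal D,\ x\in B\,}
\]
for every nearly $L$-supersolution $u$ (by showing the right side is lower semicontinuous and $\le u$, hence $\le\hat u$, while the reverse follows from $\inf_{B_r(x)}u\le\overline{\int_{\partial B_r(x)}}u\,d\omega^x_{B_r(x)}$). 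Combining these two facts immediately gives $\hat u\le\hat v+\hat w$. This route needs no polar-set theory and no measure-zero arguments; I recommend you adopt it.
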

\begin{proof}
\begin{enumerate}[leftmargin=*]
\item
For any ball $B\Subset \mathcal{D}$ and any l.s.c. function $f$ such that $f \ge u$ on $\partial B$, we have $f \ge u \ge \hat u$ on $\partial B$, and thus, for any $x \in B$, we have
\[
\int_{\partial B} \hat u \, d\omega_{B}^x \le \int_{\partial B} f \, d\omega_{B}^x.
\]
Thus, by taking the infimum of all such $f$, and using the assumption that $u$ is nearly $L$-supersolution, we obtain
\[
\tilde u(x):=\int_{\partial B} \hat u\, d\omega_{B}^x \le \overline{\int_{\partial B}} u \, d\omega_{B}^x \le u(x).
\]
We will show that $\tilde u$ is continuous in $B$.
Then, we deduce from Lemma \ref{lem1655sun} that $\tilde u \le  \hat u$ in $B$.
In particular, we have for any $x\in B$,
\[
\int_{\partial B} \hat u\, d\omega_{B}^x =\tilde u(x)\le \hat u(x).
\]
It remains to show that $\tilde u$ is continuous in $B$.
Let $h_n$ be a sequence of continuous functions such that $h_n \nearrow \hat u$ on $\partial B$.
Then, we have
\[
u_n(x):=\int_{\partial B} h_n d\omega_{B}^x \nearrow \int_{\partial B} \hat u\, d\omega_{B}^x =\tilde u(x).
\]
Since $\set{u_n}$ is an increasing sequence of $L$-solutions in $B$, it follows from Lemma \ref{lem_harnack_principle} that $\tilde u$ is an $L$-solution in $B$, and thus it is continuous in $B$.

\item
For any ball $B\Subset \mathcal{D}$, $x \in B$, and $v\in \mathscr F$, we have
\[
\overline{\int_{\partial B}} u\,d\omega^x_{B} \le \overline{\int_{\partial B}} v\,d\omega^x_{B} = \int_{\partial B} v\,d\omega^x_{B} \le v(x).
\]
By taking infimum over $v \in \mathscr F$, we conclude that
\[
\overline{\int_{\partial B}} u\,d\omega^x_{B}  \le u(x).
\]
Since $u > -\infty$ and cannot be identically $+\infty$ on any connected component of $\mathcal D$, it is a nearly $L$-subsolution.

\item
We will show that for any ball $B\Subset \mathcal D$ and $x \in B$, the following holds:
\begin{equation}			\label{eq1657sun}
\overline{\int_{\partial B}} (u_1+u_2)\,d\omega^x_{B} \le \overline{\int_{\partial B}} u_1\,d\omega^x_{B} + \overline{\int_{\partial B}} u_2\,d\omega^x_{B}.
\end{equation}
Indeed, if $f_1$ and $f_2$ are l.s.c. functions such that $u_1 \le f_1$ and $u_2 \le f_2$ on $\partial B$, then $f=f_1+f_2$ is an l.s.c. function such that $u_1+u_2 \le f_1+f_2$ on $\partial B$, and thus, we have
\[
\overline{\int_{\partial B}} (u_1+u_2)\,d\omega^x_{B} \le \int_{\partial B} (f_1+f_2)\,d\omega^x_{B} = \int_{\partial B} f_1\,d\omega^x_{B} + \int_{\partial B} f_2\,d\omega^x_{B}.
\]
By taking the infimum over $f_1$ and $f_2$ separately, we obtain \eqref{eq1657sun}.

Next, we will show that if $u$ is a nearly $L$-supersolution, then we have
\begin{equation}			\label{eq1658sun}
\hat u(x)=\sup\Set{\overline{\int_{\partial B}} u \,d\omega^x_B: B \Subset \mathcal{D},\; x \in B},\quad \forall x \in \mathcal{D}.
\end{equation}
To see this, we set
\[
\tilde u(x)=\sup\Set{\overline{\int_{\partial B}} u \,d\omega^x_B: B \Subset \mathcal{D},\; x \in B}.
\]
Since $u$ is nearly $L$-supersolution, it is clear that $\tilde u(x) \le u(x)$.
We will show that $\tilde u$ is lower semicontinuous in $\mathcal{D}$.
Then, Lemma \ref{lem1655sun} implies that $\tilde u \le \hat u$ in $\mathcal{D}$, and thus, $\tilde u(x) \le \hat u(x)$.
Suppose there exists $x_0 \in \mathcal{D}$ such that
\[
\alpha:=\liminf_{y \to x_0} \tilde u(y) < \tilde u(x_0).
\]
For any number $\beta$ satisfying $\alpha<\beta<\tilde u(x_0)$, there exists a ball $B$ such that $x_0 \in B \Subset \mathcal D$ and $u_B(x_0)>\beta$, where we set
\[
u_B(y):=\overline{\int_{\partial B}} u \,d\omega^{y}_B.
\]

We claim that $u_B$ is an $L$-solution in $B$.
First, observe that a function $H_f$ defined by $H_f(y)=\int_{\partial B} f\, d\omega_B^y$ for an l.s.c. function $f$ on $\partial B$ is an $L$-solution in $B$, as detailed in the proof of Lemma \ref{lem1033thu}(c). 
Next, note that $u_B(x)=\inf_{v\in \mathscr{F}} v(x)$, where $\mathscr{F}= \set{H_f: f \text{ is l.s.c. on }\partial B,\; f\ge u\text{ on }\partial B}$.
A slight modification of the proof of Lemma \ref{lem_harnack_principle} shows that $u_B$ is an $L$-solution in $B$.
This particularly implies that $u_B$ is continuous in $B$.
Therefore, there exists another ball $B'$ such that $x_0 \in B' \Subset B$ and $u_B(y) > \beta$ for every $y \in B'$.
Consequently, we have $u(y) \ge u_B(y)>\beta$ for every $y \in B'$ as $u$ is nearly $L$-supersolution.
Therefore, we deduce that
\[
\tilde u(y) \ge \overline{\int_{\partial B'}} u \,d\omega_{B'}^y \ge \overline{\int_{\partial B'}} \beta \,d\omega_{B'}^y= \int_{\partial B'} \beta \,d\omega_{B'}^y=\beta,\quad \forall y \in B'.
\]
This implies that $\alpha=\liminf_{y\to x_0} \tilde u(y) \ge \beta$, which is a contradiction.
We have confirmed that $\tilde u(x) \le \hat u(x)$.

The see the converse inequality holds, consider a ball $B=B_r(x) \Subset \mathcal D$, and observe that
\[
\inf_{B_r(x)} u \le \int_{\partial B} \left(\inf_{\overline B} u \right) \,d\omega^x_B =\overline{\int_{\partial B}}\left(\inf_{\overline B} u \right) \,d\omega^x_B  \le \overline{\int_{\partial B}} u \,d\omega^x_B \le \tilde{u}(x).
\]
By taking the limit $r \to 0$ in the inequality above, we obtain $\hat u(x) \le \tilde u (x)$.
We have confirmed the identity \eqref{eq1658sun}.

Finally, we apply we combine \eqref{eq1657sun} and \eqref{eq1658sun} together, and deduce from $u=v+w$ that $\hat u \le \hat v+ \hat w$.
It is clear from Definition \ref{def2249sat} that $\hat u \ge \hat v + \hat w$.
This completes the proof. \qedhere
\end{enumerate}
\end{proof}

\begin{lemma}[Lemma \ref{prop0800sun}]			
Let $u$ be a nonnegative $L$-supersolution in $\mathcal{B}$, and assume $u<+\infty$ in $\mathcal B$.
For an open set $\mathcal{D}\subset \mathcal{B}$, we define
\[
\mathscr{F}_{\mathcal D}=\set{v \in \mathfrak{S}^+(\mathcal{B}):  v\ge 0\,\text{ in }\,\mathcal{B},\; v - u \in \mathfrak{S}^+(\mathcal{D})}.
\]
For $x \in \mathcal{B}$, we then define
\[
\mathbb{P}_\mathcal{D} u(x)=\inf_{v \in \mathscr{F}_{\mathcal D}} v(x).
\]
The following properties hold:
\begin{enumerate}[leftmargin=*]
\item
$\mathbb{P}_\mathcal{D} u \in \mathfrak{S}^+(\mathcal{B})$ and $0 \le \mathbb{P}_\mathcal{D} u \le u$ in $\mathcal{B}$.
\item
$\mathbb{P}_\mathcal{D} u \in \mathscr{F}_{\mathcal D}$, i.e.,  $\mathbb{P}_\mathcal{D} u - u$ is an $L$-supersolution in $\mathcal{D}$.
\item
$\mathbb{P}_\mathcal{D} u$ is an $L$-solution in $\mathcal{B}\setminus \overline{\mathcal{D}}$.
\item
If $v \in \mathscr{F}_{\mathcal{D}}$, then $v-\mathbb{P}_\mathcal{D} u$ is a nonnegative $L$-supersolution in $\mathcal{B}$.
\end{enumerate}
\end{lemma}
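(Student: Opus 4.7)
The plan is to combine the nearly-$L$-supersolution framework developed in Lemma~\ref{lem1020sun} with Choquet's topological lemma on lower semicontinuous regularization, systematically identifying $\mathbb{P}_\mathcal{D} u$ with its lsc regularization $\widehat{\mathbb{P}_\mathcal{D} u}$ when necessary. The crucial first step is to establish two stability properties of the family $\mathscr{F}_\mathcal{D}$.

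Taking $v=u$ shows $u\in \mathscr{F}_\mathcal{D}$ (since $u-u\equiv 0\in \mathfrak{S}^+(\mathcal{D})$), so $\mathscr{F}_\mathcal{D}$ is nonempty and $0\le \mathbb{P}_\mathcal{D} u\le u<+\infty$ on $\mathcal{B}$. If $v_1,v_2\in \mathscr{F}_\mathcal{D}$, then $\min(v_1,v_2)\in \mathfrak{S}^+(\mathcal{B})$ and $\min(v_1,v_2)-u=\min(v_1-u,v_2-u)\in \mathfrak{S}^+(\mathcal{D})$ by Lemma~\ref{lem2334sun}(d), so $\mathscr{F}_\mathcal{D}$ is closed under pointwise minima. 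If $v\in \mathscr{F}_\mathcal{D}$ and $B\Subset \mathcal{B}\setminus \overline{\mathcal{D}}$, then Lemma~\ref{lem1033thu} gives $\mathscr{E}_B v\in \mathfrak{S}^+(\mathcal{B})$; since $\mathscr{E}_B v=v$ on $\mathcal{D}$, the lowering $\mathscr{E}_B v$ also lies in $\mathscr{F}_\mathcal{D}$. Invoking Choquet's lemma and passing to partial minima, one obtains a decreasing sequence $\{v_n\}\subset \mathscr{F}_\mathcal{D}$ whose pointwise limit $w_\infty$ satisfies $\widehat{w_\infty}=\widehat{\mathbb{P}_\mathcal{D} u}$.

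For (a), Lemma~\ref{lem1020sun}(b) shows that $\mathbb{P}_\mathcal{D} u$ is a nearly $L$-supersolution, so Lemma~\ref{lem1020sun}(a) yields $\widehat{\mathbb{P}_\mathcal{D} u}\in \mathfrak{S}^+(\mathcal{B})$. For (b), apply Lemma~\ref{lem1020sun}(b) to $\{v_n-u\}\subset \mathfrak{S}^+(\mathcal{D})$: the infimum $\mathbb{P}_\mathcal{D} u-u$ is a nearly $L$-supersolution in $\mathcal{D}$. Then Lemma~\ref{lem1020sun}(c) applied to the decomposition $\mathbb{P}_\mathcal{D} u=(\mathbb{P}_\mathcal{D} u-u)+u$ on $\mathcal{D}$, combined with $\hat{u}=u$, gives $\widehat{\mathbb{P}_\mathcal{D} u}-u=\widehat{\mathbb{P}_\mathcal{D} u-u}\in \mathfrak{S}^+(\mathcal{D})$. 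For (c), closure under lowering makes $\mathscr{F}_\mathcal{D}$ saturated on $\mathcal{B}\setminus \overline{\mathcal{D}}$, so the argument of Lemma~\ref{lem1034thu} applies verbatim to conclude $L\mathbb{P}_\mathcal{D} u=0$ there.

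For (d), given $v\in \mathscr{F}_\mathcal{D}$, the inequality $v\ge \mathbb{P}_\mathcal{D} u$ is immediate. To obtain $v-\mathbb{P}_\mathcal{D} u\in \mathfrak{S}^+(\mathcal{B})$, apply Lemma~\ref{lem1020sun}(c) to the decomposition $v=(v-\mathbb{P}_\mathcal{D} u)+\mathbb{P}_\mathcal{D} u$: since $v$ is already lsc, the regularization identity forces $v-\widehat{\mathbb{P}_\mathcal{D} u}=\widehat{v-\mathbb{P}_\mathcal{D} u}\in \mathfrak{S}^+(\mathcal{B})$. \emph{The main obstacle} is the prerequisite for this step, namely that $v-\mathbb{P}_\mathcal{D} u$ is itself a nearly $L$-supersolution; the difference of two $L$-supersolutions is generally not $L$-supersolution even when nonnegative. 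This has to be verified directly from the upper-envelope definition, exploiting the minimality of $\mathbb{P}_\mathcal{D} u$ within $\mathscr{F}_\mathcal{D}$: for any ball $B\Subset \mathcal{B}$ and any lsc competitor $f\ge v-\mathbb{P}_\mathcal{D} u$ on $\partial B$, a suitable Perron-style lift of $v$ obtained by pasting $\mathbb{P}_\mathcal{D} u+f$ across $B$ yields a new element of $\mathscr{F}_\mathcal{D}$ dominated by $v$, from which the required mean-value inequality for $v-\mathbb{P}_\mathcal{D} u$ at interior points of $B$ follows by minimality. This is the classical balayage-type argument underlying Hervé's Theorem 12.1, and it is precisely the reason the Appendix develops the nearly-supersolution framework rather than working with $L$-supersolutions directly.
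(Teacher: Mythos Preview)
Your approach is essentially the paper's: use Lemma~\ref{lem1020sun} to pass from the pointwise infimum to its lower semicontinuous regularization, verify $\widehat{\mathbb{P}_\mathcal{D} u}\in\mathscr{F}_\mathcal{D}$ via part (c) of that lemma (hence $\mathbb{P}_\mathcal{D} u=\widehat{\mathbb{P}_\mathcal{D} u}$ by minimality), and invoke saturation for (c). The invocation of Choquet's lemma and the decreasing sequence $\{v_n\}$ is unnecessary---the paper applies Lemma~\ref{lem1020sun}(b) directly to the full family $\mathscr{F}_\mathcal{D}$ and to $\{v-u:v\in\mathscr{F}_\mathcal{D}\}$---but this is harmless.

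The one place your sketch diverges is the pasting construction in (d). You propose taking an l.s.c.\ $f\ge v-\mathbb{P}_\mathcal{D} u$ on $\partial B$ and pasting ``$\mathbb{P}_\mathcal{D} u+f$'' to obtain a competitor \emph{dominated by $v$}. The paper's actual construction is dual to this: it takes a \emph{continuous} $f\le v-u_1$ on $\partial B$ (where $u_1=\mathbb{P}_\mathcal{D} u$), solves for the $L$-harmonic extension $w$ of $f$ in $B$, and pastes $\min(v-w,u_1)$ inside $B$ with $u_1$ outside to produce a competitor $v_1\in\mathscr{F}_\mathcal{D}$ that is \emph{dominated by $u_1$}, hence equal to $u_1$ by minimality. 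This yields $w\le v-u_1$ in $B$, which is the required mean-value inequality. The delicate part---checking that $v_1-u\in\mathfrak{S}^+(\mathcal{D})$ and that $v_1$ is l.s.c.\ across $\partial B$---requires a second application of the pasting lemma, this time inside $\mathcal{D}$. Your description gets the spirit right but the roles of $v$ and $u_1$, and the direction of the inequality on $f$, are reversed relative to what the argument actually needs.
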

\begin{proof}
We define $u_1:=\mathbb{P}_\mathcal{D}u$ and $u_2:=u_1-u$.
It is clear that $u_1 \ge 0$.
Since $u \in \mathscr{F}_{\mathcal{D}}$, we have $u_1 \le u$.
Therefore, we have  $0\le u_1 \le u$.
Note that $\hat u_1 \in \mathfrak{S}^+(\mathcal{B})$ according to Lemma \ref{lem1020sun}.
Since we have
\[
u_2(x)=\mathbb{P}_\mathcal{D} u(x) - u(x)=\inf_{v\in \mathscr{F}_\mathcal{D}} (v-u)(x),
\]
and $v-u\in \mathfrak{S}^+(\mathcal{D})$ for $v \in \mathscr{F}_\mathcal{D}$,
it follows again from Lemma \ref{lem1020sun} that $\hat u_2 \in \mathfrak{S}^+(\mathcal{D})$.
Moreover, it follows from Lemma \ref{lem1020sun} that $\hat u_1 = \hat u+ \hat u_2$ in $\mathcal{D}$.
Therefore, as $\hat u =u$, we conclude that
\[
\hat u_1(x)=u(x)+\hat u_2(x),\quad \forall x \in \mathcal{D}.
\]

We have shown that $\hat u_1 \in \mathscr{F}_\mathcal{D}$.
As a result, $\hat u_1 \ge \mathbb{P}_\mathcal{D}u = u_1$, implying $u_1=\hat u_1$.
These observations confirm (a) and (b).

As it is clear that $\mathscr F_{\mathcal{D}}$ is saturated in $\mathcal{B} \setminus \overline{\mathcal{D}}$, Lemma \ref{lem1034thu} establishes (c).

Now, it only remains to establish (d).
Let $v \in \mathscr{F}_{\mathcal D}$.
For any $B\Subset \mathcal{B}$, we define
\[
w(x)= \int_{\partial B} f\,d\omega_B^x\quad\text{for }\; x \in B,
\]
where $f \in C(\partial B)$ is any function satisfying $f \le (v-u_1)$ on $\partial B$.
Note that $w \in C(\overline B)$.
If we show that $u_1 \le v-w$ in $B$, then this will imply that $v-u_1$ is a nearly $L$-supersolution in $\mathcal{B}$.
Let us define $v_1$ by
\[
v_1= \begin{cases}
\min(v-w,u_1)& \text{in }\;B,\\
\phantom{\min}u_1 &  \text{in }\; \mathcal{B} \setminus B.
\end{cases}
\]
Then, for $x_0 \in \partial B$, due to the lower semicontinuity of $u_1$ and $v$ at $x_0$, along with the continuity of $w$ at $x_0$, we have
\begin{align*}
v_1(x_0) =u_1(x_0) &\le \min\left(\liminf_{x\to x_0} u_1(x), v(x_0)-w(x_0)\right)\\
&\le \min\left(\liminf_{x\to x_0} u_1(x),\, \liminf_{x\to x_0} v(x) -w(x_0)\right)\\
& \le \min\left(\liminf_{x\to x_0} u_1(x),\, \liminf_{x\to x_0,\,x\in B}  \,(v -w)(x)\right)\le \liminf_{x\to x_0} v_1(x).
\end{align*}
We have established the lower semicontinuity of $v_1$ on $\partial B$.
Utilizing the pasting lemma, we infer that $v_1 \in \mathfrak{S}^+(\mathcal B)$.
As $w$ is an $L$-solution in $B$ and $w\le v-u_1\le v$ on $\partial B$, the comparison principle implies $w \le v$ in $B$.
Consequently, from the definition of $v_1$, we deduce that $v_1 \ge 0$ in $\mathcal{B}$.
Thus, we have $v_1 \in \mathscr{F}_\mathcal{D}$ once we establish $v_1-u$ is an $L$-supersolution in $\mathcal D$.
To see this, let us consider $v_1-u$ restricted on $\mathcal{D}$.
It follows from the definition of $v_1$ that
\[
v_1-u= \begin{cases}
\min(v-u-w,u_1-u)& \text{in }\;B\cap \mathcal{D},\\
\phantom{\min}u_1-u &  \text{in }\; \mathcal{D} \setminus B.
\end{cases}
\]
Since $u_1$, $v\in \mathscr{F}_{\mathcal{D}}$ and $w$ is an $L$-solution in $B$, according to the pasting lemma, $v_1-u \in \mathfrak{S}^+(\mathcal D)$ provided that $v_1-u$ is lower semicontinuous in $\mathcal{D}\cap \partial B$.
For $x_0 \in \mathcal{D}\cap \partial B$, by utilizing the semicontinuity of $v-u$ and continuity of $w$ at $x_0$, along with the inequality $w(x_0)\le v(x_0)-u_1(x_0)$, we deduce
\[
\liminf_{x\to x_0,\; x \in \mathcal{D}\cap \partial B} (v-u-w)(x) \ge (v-u)(x_0)-w(x_0) \ge u_1(x_0)-u(x_0)=v_1(x_0)-u(x_0).
\]

Upon confirming that $v_1 \in \mathscr{F}_{\mathcal{D}}$, we establish $v_1 \ge \mathbb{P}_\mathcal{D} u=u_1$ in $\mathcal{B}$.
Considering the definition, it is evident that $v_1 \le u_1$ in $\mathcal{B}$.
Consequently, we deduce that $v_1=u_1$ in $\mathcal{B}$, implying $\min(v-w,u_1)=u_1$ in $B$.
Therefore, we conclude that $v-w \le u_1$ in $B$, and this confirms that $v-u_1$ is a nearly $L$-supersolution in $\mathcal{B}$.

Writing $v_2=v-u_1$ so that $v=u_1+v_2$, and applying Lemma \ref{lem1020sun}, we conclude that $v=\hat v=\hat u_1+\hat v_2=u_1+\hat v_2$.
From the assumption that $v \in \mathscr{F}_{\mathcal{D}}$, it follows $v_2=v-u_1=v-\mathbb{P}_\mathcal{D} u \ge 0$.
As it is clear that $v_2$ is not identically $+\infty$, we conclude that $\hat v_2$ is a nonnegative $L$-supersolution in $\mathcal{B}$. \qedhere
\end{proof}

\textbf{Conflict of interest statement }
On behalf of all authors, the corresponding author states that there is no conflict of interest.

\textbf{Data availability statement }
No datasets were generated or analyzed during the current study.


\end{document}